\documentclass[11pt,twoside,reqno]{amsart}
\usepackage{amsmath, amsfonts, amsthm, amssymb,amscd, graphicx, amscd,cite}
\usepackage{float,epsf}
\usepackage[english]{babel}
\usepackage{enumerate}
\usepackage{tikz}

\usepackage{srcltx}
\usepackage{geometry}
\geometry{verbose,tmargin=2.8cm,bmargin=2.8cm,lmargin=2.8cm,rmargin=2.8cm}
\usepackage{verbatim}
\usepackage{mathrsfs}
\usepackage{hyperref}

\newcommand{\R}{\mathbb{R}}

 \newcommand{\GG}{\mathcal{G}}

\newcommand{\ve}{\varepsilon}

 \newcommand{\Lip}{\text{\rm Lip}}
\newcommand{\ban}[1]{\left\langle  #1 \right\rangle}
 
\topmargin 0.1in \oddsidemargin 0.1in \evensidemargin 0.1 in
\textwidth 6 in \textheight 8.5in

\newtheorem{theorem}{Theorem}[section]
\newtheorem{lemma}[theorem]{Lemma}

\newtheorem{definition}[theorem]{Definition}

\newtheorem{proposition}[theorem]{Proposition}
\newtheorem{corollary}[theorem]{Corollary}
\newtheorem{remark}[theorem]{Remark}
\newtheorem{example}[theorem]{Example}

\numberwithin{equation}{section}
\numberwithin{figure}{section}


\newcommand{\norm}[1]{\left\|#1\right\|}

\newcommand{\FF}{{\boldsymbol F}}
\renewcommand{\GG}{{\boldsymbol G}}

\newcommand{\tildef}[1]{\widetilde{ #1}}


\newcommand{\DM}{\mathcal D\mathcal M} 
\newcommand{\redb }{\partial^{*}} 

\renewcommand{\div}{\mathrm{div}} 

\newcommand{\res}{\mathop{\hbox{\vrule height 7pt width .5pt depth 0pt
\vrule height .5pt width 6pt depth 0pt}}\nolimits}

\newcommand{\Haus}[1]{{\mathscr H}^{#1}} 
\newcommand{\Leb}[1]{{\mathscr L}^{#1}} 

\renewcommand{\div}{\text{\sl div}}
\newcommand{\dr}{{\rm d}}


\def\intave#1{\int_{#1}\hbox{\llap{$\raise2.3pt\hbox{\vrule
height.9pt width7pt}\phantom{\scriptstyle{#1}}\mkern-2mu$}}}

\begin{document}
\title[Traces and Extensions of Divergence-Measure Fields on Rough Open Sets]{Traces and Extensions \\ of Bounded Divergence-Measure Fields\\ on Rough Open Sets}

\author{Gui-Qiang Chen}
\address{Gui-Qiang G. Chen, Mathematical Institute, University of Oxford,
Oxford, OX2 6GG, UK}
\email{chengq@maths.ox.ac.uk}

\author{Qinfeng Li}
\address{Qinfeng Li,
Department of Mathematics, University of Texas at San Antonio,
San Antonio, TX 78249, USA}
\email{liqinfeng1989@gmail.com}

\author{Monica Torres}
\address{Monica Torres,
Department of Mathematics, Purdue University,
 West Lafayette, IN 47907-2067, USA}
 \email{torres@math.purdue.edu}

\dedicatory{{\it In memoriam} William P. Ziemer}
\keywords{Divergence-measure field, Gauss-Green formula, rough open set,
normal trace, rough domain, extension domain, one-side approximation of sets,
finite perimeter, product rule, divergence equation}
\subjclass[2010]{Primary: 28C05, 26B20, 28A05, 26B12, 35L65, 35L67;
Secondary: 28A75, 28A25, 26B05, 26B30, 26B40, 35D30}

\begin{abstract}
We prove that an open set $\Omega \subset \mathbb{R}^n$
can be approximated by smooth sets of uniformly bounded perimeter from the interior
{\it if and only if} the open set $\Omega$ satisfies
\begin{align*}
&\qquad \qquad\qquad\qquad\qquad\qquad\qquad \mathscr{H}^{n-1}(\partial \Omega \setminus \Omega^0)<\infty,
\qquad &&\quad\qquad\qquad \qquad\qquad (*)
\end{align*}
where $\Omega^0$ is the measure-theoretic exterior of $\Omega$.
Furthermore, we show that condition (*) implies
that the open set $\Omega$ is an extension domain
for bounded divergence-measure fields, which improves the previous results
that require a strong condition that
$\mathscr{H}^{n-1}(\partial \Omega)<\infty$.
As an application, we establish a Gauss-Green formula {\it up to the boundary}
on any open set $\Omega$
satisfying condition (*) for bounded divergence-measure fields, for which
the corresponding normal trace is shown to be a bounded function concentrated on
$\partial \Omega \setminus \Omega^0$.
This new formula does not require the set of integration to be compactly contained
in the domain where the vector field is defined.
In addition, we also analyze the solvability of the divergence equation
on a rough domain with prescribed trace on the boundary, as well as
the extension domains for bounded $BV$ functions.
\end{abstract}
\maketitle

\section{Introduction}
We are concerned with necessary and sufficient conditions for open sets to be
approximated by smooth sets of uniformly bounded perimeter from their interior.
One of the motivations is from the crack problems in elasticity and materials science,
in which the domains under consideration often have lower dimensional cracks.
In this paper, we identify the following condition on a general open set $\Omega$ in the form:
\begin{align} \label{neat-2}
\mathscr{H}^{n-1}(\partial \Omega \setminus \Omega^0)<\infty
\end{align}
with $\Omega^0$ as the measure-theoretic exterior of $\Omega$,
and prove that \eqref{neat-2} is a necessary and sufficient condition
for the open set $\Omega$ to be approximated by smooth sets of uniformly bounded perimeter from
its interior.
Furthermore, we show that condition \eqref{neat-2} implies
that not only the open set $\Omega$ is an extension domain
for bounded divergence-measure fields,
but also
a Gauss-Green formula holds {\it up to the boundary}
of $\Omega$
for bounded divergence-measure fields, for which
the corresponding normal trace is shown to be a bounded function concentrated on
$\partial \Omega \setminus \Omega^0$.
This new formula does not require the set of integration to be compactly contained
in the domain where the vector field is defined.
Our formula takes into account cracks, since the integration takes place in $\Omega$,
instead of $\Omega^1$ (measure-theoretic interior) as in the previously established
formula \eqref{mia} below.

More precisely, the Gauss-Green formula is a fundamental formula in analysis in order to perform
integration by parts.
In the simplest form, it can be stated for a smooth vector field $\FF$ and a smooth bounded open set $E$
as follows:
\begin{equation}\label{basica}
\int_{E} \div \FF \, \dr x = - \int_{\partial E} \FF (y) \cdot \nu_{E}(y)\, \dr \mathscr{H}^{n-1}(y),
\end{equation}
where $\nu_E$ is the interior unit normal to $E$.
However, in many applications, it is necessary to integrate by parts for vector fields that are
only weakly differentiable and on domains with less regularity.
Then a fundamental question is how formula \eqref{basica} can be generalized to rough open sets and
weakly differentiable vector fields.
The first classical generalization is obtained by considering the left side of \eqref{basica}
as a linear functional acting on vector fields $\FF \in C_c^1(\mathbb{R}^n)$.
If $E$ is such that
\begin{equation}
\label{condicion}
\sup_{\FF \in C_c^1(\mathbb{R}^n)} \int_{E} \div \FF\, \dr x
< \infty,
\end{equation}
then the Riesz representation theorem implies that there exists a vector-valued Radon measure $\mu_E$ such that
\begin{equation}
\label{segunda}
\int_{E} \div \FF \, \dr x = \int_{\mathbb{R}^n} \FF \cdot \dr \mu_E \qquad  \text{  for all } \FF \in C_c^1(\mathbb{R}^n).
\end{equation}
A set $E$ that satisfies \eqref{condicion} is called {\it a set of finite perimeter} in $\mathbb{R}^n$.
The structure theorem of De Giorgi (see also \cite[Theorem 15.9]{Maggi} and \cite[\S\,5.5--\S\,5.7]{Ziemer})
shows that a set $E$ of finite perimeter has many regularity properties.
In particular, the topological boundary of $E$, denoted as $\partial E$,
contains an $(n-1)$--rectifiable set that is known as the reduced boundary of $E$, denoted as $\partial^* E$.
It can be shown that
every $x\in \partial^*E$ has an inner unit normal $\nu_{E}(x)$
and a tangent plane in the measure-theoretic sense (see \cite[Theorem 5.6.5]{Ziemer}).
Moreover, the Radon measure $\mu_E$ has the following structure:
\begin{equation}
 \mu_E = -\nu_E \mathscr{H}^{n-1} \res \partial^* E
\end{equation}
such that \eqref{segunda} reduces to
\begin{equation}\label{segunda2}
\int_E \div \FF \, \dr x = -\int_{\partial^* E} \FF (y) \cdot \nu_E (y) \, \dr \mathscr{H}^{n-1}(y).
\end{equation}
The Gauss-Green formula \eqref{segunda2} is also true for Lipschitz and $BV$ vector fields
over any set $E$ of finite perimeter 
(see De Giorgi \cite{degiorgi1961complementi, degiorgi1961frontiere}, Federer \cite{Federer1, Federer2}, and  Burago-Maz'ya \cite{BM2}).
The Gauss-Green formula for bounded vector fields $\FF$ with $\div \FF$ as a measure
on bounded Lipschitz domains
was studied by Anzellotti in \cite{A1983,A1983traces}.

In the context of hyperbolic conservation laws, the question of extending the Gauss-Green formula
to divergence-measure vector fields was first addressed
in Chen-Frid \cite{CF1,CF2}, as required in the analysis of
weak entropy solutions obeying the Lax entropy inequality.
In general, divergence-measure fields are vector fields $\FF \in L^p$, $1 \leq p \leq \infty$,
such that the distributional divergence $\div \FF$ is a measure, which
are much wider than $BV$ vector fields.
The theory of divergence-measure fields is distinct in nature depending on whether $ \FF \in L^{\infty}$, or $\FF \in L^p$
for $p \neq \infty$.
The Gauss-Green formula for divergence-measure fields in $L^{\infty}$ over sets of finite perimeter
was obtained
simultaneously and
independently by Chen-Torres \cite{ChenTorres} and  \v{S}ilhav\'y \cite{Silhavy1} in 2005 via different approaches.
The approach developed in \cite{ChenTorres} is to employ a product rule for bounded divergence-measure fields
to obtain a measure concentrated on $\partial^* E$ which is absolutely continuous with respect to $\mathscr{H}^{n-1}$
and whose density is the normal trace.
In \v{S}ilhav\'y \cite{Silhavy1}, a formula for the normal trace is provided,
which is defined for $\mathscr{H}^{n-1}$-{\it a.e.} $ x \in  \partial^* E$
as a limit of averages on balls $B(x,r)$.
Motivated by hyperbolic conservation laws and the insights in \cite{CF1,CF2},
the {\it interior} and {\it exterior} normal traces need to be constructed as the limit of classical normal traces
on one-sided smooth approximations of the set of finite perimeter, respectively,
so that the surface of a {\it shock wave} can be approximated with smooth surfaces to obtain the {\it interior} and {\it exterior}
fluxes on the {\it shock wave}.
In order to accomplish this, a new approximation theorem for sets of finite perimeter was developed
in Chen-Torres-Ziemer \cite{ctz} and Comi-Torres \cite{ComiTorres} which shows that the level sets of
convolutions  $w_k= \chi_{E} * \rho_{k}$ provide smooth approximations essentially
from the {\it interior} (by choosing $w_{k}^{-1}(t)$ for $\frac{1}{2} < t <1$)
and the {\it exterior} (for $0<t< \frac{1}{2}$).
In this approach, it is also assumed that $E \Subset \Omega$, where $\Omega$ is the domain of definition
of $\FF$, since the level set $w_{k}^{-1}(t)$ (with a suitable fixed $0<t < \frac{1}{2}$) can
intersect the measure-theoretic exterior $E^0$ of $E$.
A critical step in the proof is to show that $\mathscr{H}^{n-1} (w_{k}^{-1}(t) \cap E^{0}) \to 0$
as  $k \to \infty$, which is obtained by using the tangential properties of blow-ups
around the points on the reduced boundary of the sets of finite perimeter.

\smallskip
Given $\FF \in \DM^{p}(\Omega)$ for $1 \le p \le \infty$ and a bounded Borel set $E \subset \Omega$,
we can define the \textit{normal trace} of $\FF$ on $\partial E$ as the following distribution:
\begin{equation} \label{definitionofnormaltrace}
\ban{\FF \cdot \nu, \phi}_{\partial E} := \int_{E} \phi \, \dr \div \FF + \int_{E} \FF \cdot \nabla \phi \, \dr x
\qquad\,\, \mbox{for any $\phi \in \Lip_{c}(\R^{n})$}.
\end{equation}

If $\tildef{E}$ is any Borel set such that  $\Leb{n}((E \setminus \tilde{E}) \cup  (\tilde{E} \setminus E))=0$, then
$$
\ban{\FF \cdot \nu, \phi}_{\partial E} \neq \ban{\FF \cdot \nu, \phi}_{\partial \tilde{E}},
$$
unless $|\div \FF| \ll \Leb{n}$.
In particular, if $E \subset \Omega$ is an open set with smooth boundary,
then $\partial E = \partial \overline{E}$; however, if $|\div \FF|(\partial E) \neq 0$,
the normal traces of $\FF$
on the boundary of $E$ and $\overline{E}$ are different in general.
The normal trace is a distribution concentrated on the topological boundary $\partial E$ (see \cite[\S\,4]{CCT}).

If $\FF \in \DM^{\infty}_{\rm loc}(\Omega)$, and $E \Subset \Omega$ is a set of finite perimeter,
and if  $\tildef{E} = E^{1}$ or $\tildef{E} = E^{1} \cup \redb E$,
then
$|\ban{\FF \cdot \nu, \,\cdot}_{\partial \tildef{E}}| \ll \Haus{n - 1} \res \redb E$
with density in $L^{\infty}(\redb E; \Haus{n - 1})$.
Thus, for any $\phi \in \Lip_{c}(\Omega)$,
\begin{align}
&\ban{\FF \cdot \nu, \phi}_{\partial E^{1}}
 = - \int_{\redb E} \phi \, \mathfrak{F}_{\rm i} \cdot \nu_{E} \, \dr \Haus{n - 1}, \label{G-G phi Sobolev int}\\
&\ban{\FF \cdot \nu, \phi}_{\partial (E^{1} \cup \redb E)}
 = - \int_{\redb E} \phi \, \mathfrak{F}_{\rm e} \cdot \nu_{E} \, \dr \Haus{n - 1}, \label{G-G phi Sobolev ext}
\end{align}
where $\mathfrak{F}_{\rm i} \cdot \nu_{E},\, \mathfrak{F}_{\rm e} \cdot \nu_{E} \in L^{\infty}(\redb E; \Haus{n - 1})$
are the interior and exterior normal traces of $\FF$ as introduced in \cite[Theorem 5.3]{ctz}.

A Gauss-Green formula for divergence-measure fields $\FF \in L^p$, $p \neq \infty$,
and extended divergence-measure fields ({\it i.e.} $\FF$ is a vector-valued measure
whose divergence is a Radon measure) was first obtained in Chen-Frid \cite{CF2} for Lipschitz
domains.
In \v{S}ilhav\'y \cite{Silhavy2}, a Gauss-Green formula for extended divergence-measure fields
was proved to be held over general open sets, which is the most general result
presently available so far.  A formula for the normal trace distribution is given
in \cite[Theorem 2.4, (2.5)]{Silhavy2} and Chen-Frid \cite[Theorem 3.1, (3.2)]{CF2}
as the limit of averages over the neighborhoods of the boundary.

Motivated by hyperbolic conservation laws and the approach developed in
\cite{ctz}, the Gauss-Green formulas for divergence-measure  fields in $L^{p}$ for $p \neq \infty$ on general open sets was further studied in Chen-Comi-Torres \cite{CCT}.
One of the main objectives in \cite{CCT} is to represent the normal trace as the limit of classical
normal traces over smooth approximations of the domain.
Roughly speaking, the approach in  \cite{CCT} is to differentiate under the integral sign in the
formulas \cite[Theorem 2.4, (2.5)]{Silhavy2} and Chen-Frid \cite[Theorem 3.1, (3.2)]{CF2}
in order to represent the normal trace as the limit of boundary integrals ({\it i.e.}  integrals of the classical normal traces  $\FF \cdot  \nu$ over appropriate smooth approximations of the
domain).

The existence of Lipschitz deformable boundaries and the problem of characterizing vector fields in $L^p$, $p \neq \infty$,
so that the normal trace \eqref{definitionofnormaltrace} can be represented by a measure,
have also been studied in \cite{CCT}. It was first shown in \v{S}ilhav\'y  \cite[Example 2.5]{Silhavy2}
that, for $p \neq \infty$, the distribution defined in \eqref{definitionofnormaltrace} may
not be a measure.

\smallskip
One of the main purposes of this paper is to analyze bounded open sets of finite perimeter
satisfying \eqref{neat-2}
and to prove that any $\FF \in \DM^{\infty}(\Omega)$ has a normal trace that is an $L^{\infty}$ function concentrated
on $\partial \Omega \setminus \Omega^0$, without further assuming that $\FF$ is defined outside $\Omega$.
Since this trace is concentrated on $\partial \Omega \setminus \Omega^0$,
this situation is not included in \eqref{G-G phi Sobolev int}.

If $\Omega$ is a bounded open set satisfying \eqref{neat-2}, the normal trace as a distribution is:
\begin{equation}
\ban{\FF \cdot \nu, \phi}_{\partial \Omega} := \int_{\Omega} \phi \, \dr \div \FF + \int_{\Omega} \FF \cdot \nabla \phi \, \dr x
\qquad \mbox{for any $\phi \in \Lip_{c}(\R^{n})$}.
\end{equation}
A simple example of an open set satisfying \eqref{neat-2} in two-dimensions
is
$$
\Omega:= \{x\,:\, |x|<1,\,\, \mbox{$x_2 \neq 0$ when $x_1>0$} \}.
$$
Segment $L:=\{x\,:\,0<x_1<1,\, x_2=0\}$ could represent a fracture in the open set $\Omega$.
$L$ is a subset of $\Omega^1$,
which is the measure-theoretic interior of $\Omega$ as defined in \eqref{density} below;
however,
$L$ is also a part of the
topological boundary of $\Omega$.
Note that formula \eqref{G-G phi Sobolev int} {\it does not recognize}
the fracture since $\Omega^1= \{ x: |x|<1\}$, so that the integration
by parts happens in the whole open disk, and not in the desired domain $\Omega$,
thus losing the information on the {\it cracks}.
However, it is shown in Theorem \ref{da}
that the Gauss-Green formula still holds
on any open set satisfying \eqref{neat-2} (see  \eqref{nuevo} below),
and the corresponding normal trace is a bounded function that may have its support
on $\partial \Omega \cap \Omega^1$ ({\it i.e.} on the fracture).
Moreover, this new {\it up to the boundary} Gauss-Green formula does not require the domain of
integration to be compactly contained in the domain of $\FF$.
In particular, our results provide the Gauss-Green formulas on the domains with lower dimensional cracks,
as long as the {\it cracks} have finite $\mathscr{H}^{n-1}$--measure.

The proof of Theorem \ref{da} relies on Theorem \ref{brandnew}, which states that \eqref{neat-2}
is a necessary and sufficient condition on $\Omega$ so that it can be approximated by a sequence of smooth sets,
from the interior, of uniformly bounded perimeter.
The construction of the desired approximating sequence in Theorem \ref{brandnew}
is based on a fine covering of $\partial \Omega$.
Another tool in the proof of Theorem \ref{da} is Theorem \ref{ahax},
which shows that an open set satisfying \eqref{neat-2} is an extension domain
for bounded divergence-measure fields.
Theorem \ref{ahax} generalizes \cite[Theorem 8.5]{ctz} and \cite[Theorem 5.3]{comi2017locally}
where it is assumed that $\mathscr{H}^{n-1}(\partial \Omega) < \infty$.
A typical example of a set that satisfies \eqref{neat-2} with topological boundary containing a wild set of points of density zero
and $\mathcal{L}^n (\Omega^0 \cap \partial \Omega)>0$ can be found in
Barozzi-Gonzalez-Massari \cite{BGM09} and Li-Torres \cite[Theorem 8.5]{LT}.
Related problems involving the theory of divergence-measure fields have
been studied
in \cite{comi2017locally, degiovanni1999cauchy,  Frid1, Frid2, scheven2016bv, scheven2017anzellotti, scheven2016dual, Sch, S2, S3, Silhavy2}.

As a byproduct of our results, we show in Proposition \ref{bvtrace} that, for any set $\Omega$ of finite perimeter with
\begin{align}\label{specialcase}
\mathscr{H}^{n-1}\left(\partial \Omega \cap \Omega^1\right)=0
\end{align}
and $u \in BV(\Omega) \cap L^{\infty}(\Omega)$,
there exists $u^* \in L^{\infty}(\partial^* \Omega)$ (which is the \textit{trace} of $u$) such that,
for $\mathscr{H}^{n-1}$--{\it a.e.} $x \in \partial ^* \Omega$ and
any $\phi \in C_c^{1}(\mathbb{R}^n,\mathbb{R}^n)$,
the following integration by parts formula holds:
\begin{align*}
\label{cy4''}
    \int_{\Omega} \phi \cdot Du\, \dr x +\int_{\Omega} u\, \div \phi\, \dr x
    =-\int_{\partial^* \Omega} u^* \phi \cdot \nu_{\Omega}\, \dr\mathscr{H}^{n-1}
\end{align*}
with
\begin{align*}
\lim_{r \rightarrow 0} \frac{\int_{B_r(x) \cap \Omega}|u(y)-u^*(x)|\,\dr y}{r^n}=0  \qquad \, a.e. \,\,  x \in \partial^* \Omega.
\end{align*}
Proposition \ref{bvtrace} has applications to the shape optimization problem of the form:
\begin{equation}
\label{opt}
\inf_{u \in H^1(\Omega; \mathbb{S}^{n-1})} J(u,\Omega)
\end{equation}
with
\begin{equation}\label{opt-2}
J(u,\Omega):=\int_{\Omega} |\nabla u|^2\, \dr x+\int_{\partial^* \Omega} f\, u\cdot \nu_{\Omega} \, \dr \mathscr{H}^{n-1},
\end{equation}
where the minimization takes place over rough sets satisfying \eqref{specialcase} and $f$ is a given function that depends on the particular optimization problem.
The fact that the traces (up to the boundary) of bounded $BV$ functions can be defined on the reduced boundary
of $\Omega$ allows to show that the surface energy  \eqref{opt}--\eqref{opt-2},
involving the traces of bounded $H^1$ vector fields,
is well posed for the sets satisfying \eqref{specialcase} (see Li-Wang \cite{LW}).
In particular, Lipschitz domains and outward minimizing sets (see Definition \ref{pseudoconvex}) satisfy condition \eqref{specialcase}.

This paper is organized as follows:
In \S\,2, we introduce some notations and basic properties of divergence-measure fields.
In \S\,3, we prove that \eqref{neat-2} characterizes the sets that can be approximated
by the sets of uniformly bounded perimeter from the interior.
In \S\,4,  we prove that the sets satisfying \eqref{neat-2} are extension domains for bounded divergence-measure
fields.
We also show the weak convergence properties of the trace operator which will be used to establish Theorem \ref{da}.
In \S\,5, we prove our Gauss-Green formula up to the boundary on extension domains for bounded divergence-measure fields.
We also re-discovered the classical Gauss-Green formula, Theorem \ref{toy'}, obtained in \cite{ChenTorres,Silhavy1, ctz, ComiTorres}.
In \S\,6, using our previous results, we analyze the solvability of the equation: $\div \FF=0$ on rough domains,
with prescribed trace on $\partial \Omega$.
In \S\,7, we analyze extension domains for bounded $BV$ functions and show that \eqref{neat-2} is a sufficient (but not a necessary)
condition for $\Omega$ to be an extension domain for bounded $BV$ functions.

\section{Basic Notations and Properties of Divergence-Measure Fields}

In this section, we present some basic notations and properties of divergence-measure fields
for the subsequent development.

Given $E\subset\mathbb{R}^n$, the Lebesgue measure of $E$ is denoted as $\mathcal{L}^n(E)$ or $|E|$.
The set of points of density $\alpha$ of $E$ is defined as
\begin{equation}\label{density}
E^\alpha:=\{x \in \mathbb{R}^n\,:\, \lim_{r \rightarrow 0} \frac{|B_r(x) \cap E|}{|B_r(x)|}=\alpha\}.
\end{equation}
We also define
\begin{equation}
\partial^{m} E  := \mathbb{R}^{n} \setminus (E^{1} \cup E^0),
\end{equation}
which is the measure-theoretic boundary of $E$,

Note that $E^0= (\mathbb{R}^n \setminus E)^1$.
For the reduced boundary $\partial^* E$ of $E$,
$$
\partial^* E \subset \partial^m E \subset \partial E,
$$
where $\partial E$ is the topological boundary of $E$.
The perimeter of $E$ in $\Omega$ is denoted as
$$
P(E;\Omega):= \mathscr{H}^{n-1} (\partial^* E \cap \Omega).
$$
If $\Omega=\mathbb{R}^{n}$, we simply write $P(E)$.
For more details, see
\cite{afp,eg2,giusti1984minimal,Maggi,Ziemer}.

Throughout this paper, we
use $\rho_{\epsilon} \in C_c^{\infty}$ to denote the standard symmetric mollifier,
and $\omega_n$ to be the volume of the unit ball in $\mathbb{R}^n$.
For a Radon measure $\mu$, we use $|\mu|$ to denote its total variation.
We also use symbol $a\lesssim_{n} b$ to represent that $a \le C(n) b$,
where $C(n)$ is a constant depending only on $n$.
For simplicity of exposition, we assume in this paper that $\Omega$ is always a bounded set,
but most of our results can be generalized to unbounded sets.

\begin{definition}\label{pseudoconvex}
We say that $E$ is an outward minimizing set {\rm (}or pseudoconvex{\rm )} in $\mathbb{R}^n$ if  $P(F) \ge P(E)$ for any $F \supset E$.
\end{definition}

The outward minimizing sets, which are the sets of nonnegative variational mean curvature,
satisfy condition \eqref{specialcase}.
The outward minimizing sets are natural generalizations of convex sets.
For example, if $n=2$, a connected outward minimizing set is equivalent to a convex set; see \cite{FF}.
This class of sets may have very rough boundary: For example, for $n \ge 3$, an outward minimizing set
can have a boundary of positive Lebesgue measure, as shown in \cite{BGM09}.

\begin{definition}
Given an integrable vector field $\FF$ on the open set $\Omega$,
$\div \FF$ is a distribution acting on $C_c^{\infty}(\Omega)$ such that,
for any test function $\phi \in C_c^{\infty}(\Omega)$,
\begin{align}
\ban{\div \FF,\phi}:=-\int_{\Omega} \FF \cdot \nabla \phi \, \dr x.
\end{align}
We say that $\FF$ is an $L^p$ divergence-measure field in the open set $\Omega$ for $1\le p\le \infty$
if $\FF \in L^p(\Omega)$ and
\begin{align}\label{allthetime}
\sup \Big\{\int_{\Omega} \FF \cdot \nabla\phi\,\dr x\,: \, \phi \in C_c^1(\Omega), |\phi|\le1 \Big\} < \infty.
\end{align}
Condition \eqref{allthetime} implies that $\div \FF$ is a finite Radon measure in $\Omega$ {\rm (}{\it i.e.} $|\div \FF|(\Omega) < \infty${\rm )}
so that
\begin{align}
\ban{\div \FF,\phi}= \int_{\Omega} \phi \, \dr\div \FF =- \int_{\Omega} \FF \cdot \nabla \phi\, \dr x.
\end{align}
The Banach space $\mathcal{DM}^p(\Omega)$ consists of all $L^p$ divergence-measure fields on $\Omega$
for any $1\le p\le \infty$.
\end{definition}

A product rule between essentially bounded divergence-measure fields and scalar functions
of bounded variations was first proved in Chen-Frid \cite[Theorem 3.1]{CF1}
(also see  \cite{Frid1}).

\begin{theorem}[Chen-Frid \cite{CF1}] \label{productruleinfty}
Let $g \in BV(\Omega) \cap L^{\infty}(\Omega)$ and $\FF \in \DM^{\infty}(\Omega)$.
Then $g \FF \in \DM^{\infty}(\Omega)$ and
\begin{equation}\label{PRDMF}
\mathrm{div}(g \FF) = g^{*} \mathrm{div}\FF + \overline{\FF \cdot Dg}
\end{equation}
in the sense of Radon measures on $\Omega$,
where $g^{*}$ is the precise representative of $g$,
and $\overline{\FF \cdot Dg}$ is a Radon measure
which is the weak-star limit of $\FF \cdot \nabla g_{\delta}$
for some mollification $g_{\epsilon}:=g \ast \rho_{\epsilon}$
and is absolutely continuous with respect to $|Dg|$.
In addition,
$$
|\overline{\FF \cdot D g}| \le \|\FF\|_{L^{\infty}(\Omega; \R^{n})} |D g|.
$$
\end{theorem}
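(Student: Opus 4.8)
The plan is to prove the product rule $\div(g\FF) = g^* \div\FF + \overline{\FF\cdot Dg}$ via mollification, establishing first that $g\FF \in \DM^\infty(\Omega)$ and then identifying the distributional divergence as a Radon measure with the claimed decomposition. First I would fix a standard symmetric mollifier $\rho_\ep$ and set $g_\ep := g \ast \rho_\ep$, which is smooth on any $\Omega' \Subset \Omega$ for $\ep$ small. Since $g_\ep$ is smooth and $\FF \in \DM^\infty$, the classical product rule for distributions gives, on $\Omega'$,
\begin{equation}\label{prodmollify}
\div(g_\ep \FF) = g_\ep \div\FF + \FF\cdot\nabla g_\ep
\end{equation}
as distributions. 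The strategy is to pass to the limit as $\ep \to 0$ in each term of \eqref{prodmollify} tested against an arbitrary $\phi \in C_c^\infty(\Omega)$, and to show the left side converges to $\div(g\FF)$ tested against $\phi$.

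The key steps, in order, are as follows. For the left side, since $g_\ep \to g$ in $L^1_{\loc}$ and (after passing to a subsequence) pointwise a.e. with $\|g_\ep\|_{L^\infty} \le \|g\|_{L^\infty}$, dominated convergence gives $\int g_\ep\FF\cdot\nabla\phi\,\dr x \to \int g\FF\cdot\nabla\phi\,\dr x$, which shows $g\FF\in L^\infty(\Omega;\R^n)$ has a distributional divergence equal to the limit of the right-hand sides, hence is a finite Radon measure once the right side is controlled. For the first term on the right, $\int \phi\, g_\ep\,\dr\div\FF \to \int \phi\, g^*\,\dr\div\FF$: here the crucial fact is that mollifications of a $BV$ function converge $|\div\FF|$-a.e. to the precise representative $g^*$, because $g_\ep(x) \to g^*(x)$ at every point where the precise representative is well defined, and the set where $g^*$ fails to exist is $\Haus{n-1}$-negligible, hence $|\div\FF|$-negligible since $|\div\FF|$ vanishes on $\Haus{n-1}$-null sets for $\FF\in\DM^\infty$ (this last property being the technical heart). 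For the second term, $\FF\cdot\nabla g_\ep = \FF\cdot\nabla(g\ast\rho_\ep)$ is a sequence of $L^\infty$ functions times $|\nabla g_\ep|\,\dr x$; by the uniform bound $|\FF\cdot\nabla g_\ep|\,\dr x \le \|\FF\|_{L^\infty}|\nabla g_\ep|\,\dr x$ and the fact that $|\nabla g_\ep|\,\dr x \weakstarto |Dg|$ in the sense of measures, the measures $\FF\cdot\nabla g_\ep\,\dr x$ have uniformly bounded total variation, so a subsequence converges weak-star to some Radon measure which we name $\overline{\FF\cdot Dg}$, and the bound $|\overline{\FF\cdot Dg}| \le \|\FF\|_{L^\infty}|Dg|$ passes to the limit by lower semicontinuity.

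The main obstacle I expect is twofold. First, one must justify that $\int\phi\, g_\ep\,\dr\div\FF \to \int\phi\, g^*\,\dr\div\FF$, which requires knowing that the bad set of $g$ (the complement of the set of Lebesgue points, together with the jump set where one uses the averaged value) is $\Haus{n-1}$-$\sigma$-finite and hence $|\div\FF|$-null for a bounded divergence-measure field; this rests on the structural property that $|\div\FF|$ is absolutely continuous with respect to $\Haus{n-1}$ for $\FF\in\DM^\infty$, which is a nontrivial fact about bounded divergence-measure fields. Second, a priori the weak-star limit $\overline{\FF\cdot Dg}$ could depend on the choice of mollifier and subsequence; the statement asserts only existence "for some mollification," so it suffices to fix one mollifier and extract one convergent subsequence, but one should note that equation \eqref{prodmollify} forces $\overline{\FF\cdot Dg} = \div(g\FF) - g^*\div\FF$, which is independent of the mollification, so in fact the full limit exists and is canonical. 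I would close by remarking that the absolute continuity $\overline{\FF\cdot Dg}\ll|Dg|$ follows from the total-variation bound $|\overline{\FF\cdot Dg}|\le\|\FF\|_{L^\infty}|Dg|$, since any set of $|Dg|$-measure zero is then a null set for $\overline{\FF\cdot Dg}$.
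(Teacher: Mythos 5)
This theorem is quoted in the paper from Chen--Frid \cite{CF1} without an in-paper proof, so there is nothing internal to compare against; your mollification argument is essentially the standard proof from that reference (classical product rule for $g_\ep\FF$, then passage to the limit term by term). The two genuinely delicate points --- the $\Haus{n-1}$-a.e.\ convergence $g_\ep \to g^{*}$ for a symmetric mollifier combined with $|\div \FF| \ll \Haus{n-1}$ for bounded divergence-measure fields, and the weak-star compactness of the measures $\FF\cdot\nabla g_\ep\,\dr x$ together with the lower-semicontinuity argument yielding $|\overline{\FF\cdot Dg}|\le \|\FF\|_{L^{\infty}}|Dg|$ --- are both correctly identified and handled, so the proposal is correct.
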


Let $\FF \in \DM^{p}(\Omega)$ for $1 \le p \le \infty$,
and let $g \in L^{\infty}(\Omega) \cap BV(\Omega)$.
If $p = \infty$, then
\begin{align}\label{cui1}
|\div \FF|+|\div(g \FF)| \ll \Haus{n - 1},
\end{align}
as observed first in \cite[Proposition 3.1]{CF1}.
If $p \in [\frac{n}{n-1}, \infty)$,
$$
 |\div \FF|(B) = |\div (g \FF)|(B) = 0
$$
for any Borel set $B$ with $\sigma$ finite $\Haus{n - p'}$ measure
(see \cite[Theorem 3.2]{Silhavy1}).

We will use the following approximation result,
whose proof is similar to the analogous result
for $BV$ functions (see \cite{CF1,CF2,giusti1984minimal}).

\begin{proposition}\label{approximation}
Let $\FF \in \mathcal{DM}^p(\Omega)$ for $1\le p\le \infty$ and a bounded domain $\Omega$.
Then there exists $\FF_j \in C^{\infty}(\Omega;\mathbb{R}^n)$ such that
\begin{align}
\lim_{j \rightarrow \infty}|\div \FF_j|(\Omega)=|\div \FF|(\Omega), \qquad
\lim_{j\rightarrow \infty}\norm{\FF_j-\FF}_{L^1(\Omega)}=0 \label{coushu-a}
\end{align}
with
\begin{align}\label{coushu-b}
\sup_{j}\|\FF_j\|_{L^p(\Omega)}<\infty.
\end{align}
\end{proposition}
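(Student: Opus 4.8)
The plan is to adapt the classical partition-of-unity together with mollification argument used for $BV$ functions (see \cite{giusti1984minimal}), the essential new point being that the cross terms produced by the product rule cancel because the partition of unity sums to $1$. First I would fix a small parameter $\ep>0$ (the desired field will be $\FF_j:=\FF_\ep$ with $\ep=1/j$) and exhaust $\Omega$ by the open sets $\Omega_i:=\{x\in\Omega:\dist(x,\partial\Omega)>\tfrac1i\}$ (with $\Omega_0:=\emptyset$), so that $\overline{\Omega_i}\Subset\Omega_{i+1}$ and $\bigcup_i\Omega_i=\Omega$, and set $V_i:=\Omega_{i+1}\setminus\overline{\Omega_{i-1}}$. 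The family $\{V_i\}$ is an open cover of $\Omega$ with a finite overlap number $N$ (each point lies in at most two of the $V_i$). Let $\{\phi_i\}$ be a smooth partition of unity subordinate to $\{V_i\}$, so that $\phi_i\in C_c^\infty(V_i)$, $0\le\phi_i\le1$, and $\sum_i\phi_i\equiv1$ on $\Omega$. Since $\FF\phi_i\in L^p(\Omega;\R^n)$ has compact support in $V_i$ and $\FF\cdot\nabla\phi_i\in L^1(\Omega)$, I can pick $\epsilon_i>0$ so small that
\begin{align*}
\supp\big(\rho_{\epsilon_i}\ast(\FF\phi_i)\big)\subset V_i,\qquad \norm{\rho_{\epsilon_i}\ast(\FF\phi_i)-\FF\phi_i}_{L^1(\Omega)}<\ep\,2^{-i},\qquad \norm{\rho_{\epsilon_i}\ast(\FF\cdot\nabla\phi_i)-\FF\cdot\nabla\phi_i}_{L^1(\Omega)}<\ep\,2^{-i},
\end{align*}
and define $\FF_\ep:=\sum_i\rho_{\epsilon_i}\ast(\FF\phi_i)$. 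This sum is locally finite, so $\FF_\ep\in C^\infty(\Omega;\R^n)$.

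The $L^1$ convergence in \eqref{coushu-a} is immediate from $\FF=\sum_i\FF\phi_i$ and the second bound above, which give $\norm{\FF_\ep-\FF}_{L^1(\Omega)}\le\sum_i\ep\,2^{-i}<\ep$. For the uniform bound \eqref{coushu-b} I would exploit that, after mollification, at most $N$ of the summands are nonzero at each point: for $1\le p<\infty$, a convexity estimate together with Young's inequality gives
\begin{equation*}
\norm{\FF_\ep}_{L^p(\Omega)}^p\le N^{p-1}\sum_i\norm{\FF\phi_i}_{L^p(\Omega)}^p\le N^{p-1}\int_\Omega|\FF|^p\sum_i\phi_i\,\dr x=N^{p-1}\norm{\FF}_{L^p(\Omega)}^p,
\end{equation*}
using $\phi_i^p\le\phi_i$, while for $p=\infty$ one gets $\norm{\FF_\ep}_{L^\infty(\Omega)}\le N\norm{\FF}_{L^\infty(\Omega)}$; in every case the bound is independent of $\ep$.

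The core of the proof is the strict convergence $|\div\FF_\ep|(\Omega)\to|\div\FF|(\Omega)$. Since $\FF_\ep\to\FF$ in $L^1(\Omega)$, one has $\div\FF_\ep\weakstarto\div\FF$, so lower semicontinuity of the total variation yields $|\div\FF|(\Omega)\le\liminf_{\ep\to0}|\div\FF_\ep|(\Omega)$. For the reverse inequality I use the elementary product rule $\div(\FF\phi_i)=\phi_i\,\div\FF+(\FF\cdot\nabla\phi_i)\,\Leb{n}$ to write
\begin{equation*}
\div\FF_\ep=\sum_i\rho_{\epsilon_i}\ast(\phi_i\,\div\FF)+\sum_i\rho_{\epsilon_i}\ast(\FF\cdot\nabla\phi_i).
\end{equation*}
Because $\phi_i\ge0$, the pointwise estimate $|\rho_{\epsilon_i}\ast(\phi_i\,\div\FF)|\le\rho_{\epsilon_i}\ast(\phi_i\,|\div\FF|)$ gives $\sum_i\int_\Omega|\rho_{\epsilon_i}\ast(\phi_i\,\div\FF)|\,\dr x\le\sum_i\int_\Omega\phi_i\,\dr|\div\FF|=|\div\FF|(\Omega)$, again using $\sum_i\phi_i\equiv1$. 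The decisive cancellation is in the second sum: since $\sum_i\nabla\phi_i=\nabla\big(\sum_i\phi_i\big)=0$,
\begin{equation*}
\sum_i\rho_{\epsilon_i}\ast(\FF\cdot\nabla\phi_i)=\sum_i\big(\rho_{\epsilon_i}\ast(\FF\cdot\nabla\phi_i)-\FF\cdot\nabla\phi_i\big),
\end{equation*}
whose $L^1(\Omega)$ norm is below $\sum_i\ep\,2^{-i}<\ep$. Hence $|\div\FF_\ep|(\Omega)\le|\div\FF|(\Omega)+\ep$, and combined with the lower bound this proves \eqref{coushu-a}.

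I expect the main obstacle to be precisely this last step. Whereas the $L^1$ and $L^p$ estimates are routine, the sharp upper bound $|\div\FF|(\Omega)+\ep$ hinges on the exact algebraic identity $\sum_i\nabla\phi_i=0$, so that the errors created by differentiating the partition of unity cancel rather than accumulate; and it is essential that the parameters $\epsilon_i$ be chosen \emph{after} fixing $\{\phi_i\}$, so that each $L^1$ function $\FF\cdot\nabla\phi_i$ is individually well approximated by its mollification.
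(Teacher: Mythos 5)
Your proof is correct and follows exactly the partition-of-unity plus mollification argument that the paper itself invokes (it gives no written proof, deferring to the analogous $BV$ result in \cite{CF1,CF2,giusti1984minimal}); the key points — choosing $\epsilon_i$ after fixing $\{\phi_i\}$, the bound $\sum_i\int_\Omega|\rho_{\epsilon_i}\ast(\phi_i\,\div\FF)|\,\dr x\le|\div\FF|(\Omega)$, and the cancellation $\sum_i\FF\cdot\nabla\phi_i=0$ — are precisely the ones in that standard argument. Lower semicontinuity from the $L^1$ convergence then closes the strict convergence of total variations, so nothing is missing.
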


\begin{remark}\label{i2}
In fact, we can choose $\FF_j$ with the additional property
that $\|\FF_j\|_{L^p} \le \|\FF\|_{L^p}$ for all $1 \le p \le \infty$ in Proposition {\rm \ref{approximation}}.
This follows from the proof of Proposition {\rm \ref{approximation}} as in {\rm \cite{CF1,CF2}}
and the Young inequality for the convolutions.
\end{remark}

We will frequently apply the following two theorems due to Federer ({\it cf}. \cite{Federer}), which can also be found in
\cite{afp,eg2,Maggi,Ziemer}.

\begin{theorem}\label{Federertheorem}
If $E$ is a set of finite perimeter in $\mathbb{R}^{n}$, then $\mathbb{R}^n=E^1 \cup E^0 \cup \partial^m E$
and $\mathscr{H}^{n-1}(\partial^m E \setminus \partial^*E)=0$.
\end{theorem}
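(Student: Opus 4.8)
The first assertion requires no work: by the very definition $\partial^m E := \mathbb{R}^n \setminus (E^1 \cup E^0)$ recorded above, the three sets $E^1$, $E^0$, $\partial^m E$ form a disjoint partition of $\mathbb{R}^n$, so $\mathbb{R}^n = E^1 \cup E^0 \cup \partial^m E$ is automatic. All the content of the theorem is in the second assertion, and the plan is to compare the Hausdorff measure of $A := \partial^m E \setminus \partial^* E$ against the perimeter measure by a density argument.

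First I would set $\mu := \mathscr{H}^{n-1} \res \partial^* E$, which by the De Giorgi structure theorem coincides with the total variation of the perimeter measure (the $|D\chi_E|$ associated to $\mu_E = -\nu_E\,\mathscr{H}^{n-1}\res\partial^* E$ recalled in the introduction), so that $\mu(B_r(x)) = \mathscr{H}^{n-1}(\partial^* E \cap B_r(x)) = P(E; B_r(x))$. Since $\partial^* E \subset \partial^m E$ and $A$ is by construction disjoint from $\partial^* E$, one has immediately
\begin{equation*}
\mu(A) = \mathscr{H}^{n-1}(A \cap \partial^* E) = 0.
\end{equation*}
The task is therefore to deduce $\mathscr{H}^{n-1}(A) = 0$ from $\mu(A) = 0$, and the bridge between them is a uniform lower bound on the upper $(n-1)$-dimensional density of $\mu$ at points of $\partial^m E$.

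The key step, which I expect to be the main obstacle, is to prove that there is a dimensional constant $c(n) > 0$ with
\begin{equation*}
\limsup_{r \to 0} \frac{\mu(B_r(x))}{r^{n-1}} \;=\; \limsup_{r \to 0} \frac{P(E; B_r(x))}{r^{n-1}} \;\ge\; c(n)
\qquad \text{for every } x \in \partial^m E.
\end{equation*}
To establish this I would argue by contradiction. If the upper density vanished at some $x \in \partial^m E$, then the relative isoperimetric inequality in $B_r(x)$,
\begin{equation*}
\min\{|E \cap B_r(x)|,\, |B_r(x)\setminus E|\}^{\frac{n-1}{n}} \le C(n)\, P(E; B_r(x)),
\end{equation*}
would force $\min\{|E \cap B_r(x)|,\, |B_r(x)\setminus E|\} = o(r^n)$ as $r \to 0$. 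Using the absolute continuity of $r \mapsto |E \cap B_r(x)|$ to rule out oscillation of the volume ratio $r^{-n}|E\cap B_r(x)|$ between values near $0$ and near $1$, one concludes that this ratio converges either to $0$ or to $1$, i.e. $x \in E^0 \cup E^1$, contradicting $x \in \partial^m E$. This no-oscillation argument, where the relative isoperimetric inequality does its real work, is the delicate point.

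With the density lower bound in hand, the conclusion follows from the standard density comparison theorem for Radon measures (see \cite{afp,eg2,Maggi,Ziemer}): if $\limsup_{r \to 0} r^{-(n-1)} \mu(B_r(x)) \ge c(n)$ for every $x$ in a Borel set $A$, then $\mathscr{H}^{n-1}(A) \le C(n)\, c(n)^{-1}\, \mu(A)$. Applying this with $A = \partial^m E \setminus \partial^* E$ and inserting $\mu(A) = 0$ established above yields $\mathscr{H}^{n-1}(\partial^m E \setminus \partial^* E) = 0$, which completes the proof.
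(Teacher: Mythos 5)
The paper does not prove this statement at all: it is quoted as Federer's theorem, with the proof deferred to the cited references (\cite{Federer,afp,eg2,Maggi,Ziemer}), so there is no internal argument to compare yours against. What you propose is essentially the standard textbook proof: the partition claim is definitional; $\mu := \mathscr{H}^{n-1}\res \partial^* E = |D\chi_E|$ vanishes on $A := \partial^m E \setminus \partial^* E$; and one transfers $\mu(A)=0$ to $\mathscr{H}^{n-1}(A)=0$ through a lower bound on the upper $(n-1)$-density of $\mu$ at points of $\partial^m E$. Your no-oscillation step is also sound: continuity of $r \mapsto |E\cap B_r(x)|/|B_r(x)|$ together with connectedness of $(0,r_0)$ shows that if $\min\{|E\cap B_r|,|B_r\setminus E|\} = o(r^n)$ then the volume ratio converges to $0$ or to $1$.

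The one genuine error is in the displayed key step: there is \emph{no} dimensional constant $c(n)>0$ such that $\limsup_{r\to 0} r^{1-n}P(E;B_r(x)) \ge c(n)$ at every $x \in \partial^m E$. For a counterexample take $E = \bigcup_j B\left(x_j, \ve\, 2^{-j}\right) \subset \R^2$ with $x_j=(2^{-j},0)$ and $\ve$ small; the balls are disjoint, $E$ has finite perimeter, the origin lies in $\partial^m E$ because the volume ratio at $0$ stays comparable to $\ve^2$, and yet $P(E;B_r(0)) \lesssim \ve\, r$, so the upper perimeter density at $0$ is $O(\ve)$ and can be made arbitrarily small. Your own contradiction argument is consistent with this: assuming the upper density \emph{vanishes} and deducing $x \in E^0 \cup E^1$ proves only that the upper density is strictly positive on $\partial^m E$, not that it is bounded below by a uniform constant. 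Fortunately strict positivity is all you need, precisely because you have already shown $\mu(A)=0$: write $A = \bigcup_k A_k$ with
\begin{equation*}
A_k := \Big\{ x \in A \,:\, \limsup_{r\to 0} \frac{\mu(B_r(x))}{r^{n-1}} > \frac{1}{k} \Big\},
\end{equation*}
apply the density comparison theorem on each $A_k$ to get $\mathscr{H}^{n-1}(A_k) \le C(n)\, k\, \mu(A_k) = 0$, and let $k \to \infty$. With the uniform constant replaced by this exhaustion, your argument closes correctly.
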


\begin{theorem}[Criteria for sets of finite perimeter]\label{criteria'}
If $E\subset \mathbb{R}^n$ satisfies $\mathscr{H}^{n-1}(\partial^mE)<\infty$, then $E$ is a set of finite perimeter.
\end{theorem}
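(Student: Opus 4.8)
The plan is to reduce the claim to the characterization of $BV$ functions through their one-dimensional sections and then to control those sections by slicing the essential boundary. Write $\chi_E$ for the indicator of $E$, and recall that $E$ has finite perimeter exactly when $\chi_E \in BV(\mathbb{R}^n)$, with $P(E)=|D\chi_E|(\mathbb{R}^n)$. By the sections characterization of $BV$, it suffices to bound, for each coordinate direction $e_i$, the integral $\int_{\mathbb{R}^{n-1}} V(\chi_E^{i,y})\,\dr y$, where $\chi_E^{i,y}$ is the section $t\mapsto \chi_E(y+te_i)$ of $\chi_E$ along the line through $y\in e_i^{\perp}\cong\mathbb{R}^{n-1}$ and $V(\cdot)$ is the essential pointwise variation on $\mathbb{R}$. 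If these $n$ integrals are finite, then $\chi_E\in BV(\mathbb{R}^n)$ with $P(E)\lesssim_n \sum_i \int V(\chi_E^{i,y})\,\dr y$. (Consistently with the paper's bounded setting one may localize or assume $|E|<\infty$; the isoperimetric inequality reduces the general case to this one up to complementation.)

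First I would estimate the variation of a single section. For a measurable set $A\subset\mathbb{R}$, the essential variation $V(\chi_A)$ equals the number of essential jumps of the precise representative of $\chi_A$; since every such jump forces a point of the one-dimensional essential boundary $\partial^{m,1}A$ strictly between two consecutive jumps, one has $V(\chi_A)\le \mathscr{H}^0(\partial^{m,1}A)$. Applying this to $A=E^{i,y}:=\{t: y+te_i\in E\}$ gives $V(\chi_E^{i,y})\le \mathscr{H}^0(\partial^{m,1}E^{i,y})$.

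The heart of the argument is to relate the essential boundary of a section to the section of the essential boundary, namely to prove that, for $\mathscr{L}^{n-1}$-a.e.\ $y$,
\[
\partial^{m,1}E^{i,y}\subseteq (\partial^m E)^{i,y}:=\{t: y+te_i\in \partial^m E\}.
\]
Equivalently, for a.e.\ $y$, every point of the line at which the section has one-dimensional density different from $0$ and $1$ lifts to a point at which $E$ has $n$-dimensional density different from $0$ and $1$. I expect this to be the main obstacle. It is \emph{not} a direct consequence of Fubini's theorem, precisely because the relevant points are the density-degenerate ones, which form Lebesgue-null sets on each line and are therefore invisible to the naive a.e.\ comparison $\chi_E^{i,y}(t)=(\text{density of }E\text{ at }y+te_i)$. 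It is instead established through the fine theory of one-dimensional sections of sets of finite perimeter (or, in Federer's original integral-geometric approach, through the behaviour of sections under the coarea formula).

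Granting this inclusion, I would conclude by the elementary integral-geometric (Eilenberg) inequality. The orthogonal projection $\pi_i:\mathbb{R}^n\to e_i^{\perp}$ is $1$-Lipschitz, so for any Borel set $S$ one has $\int_{\mathbb{R}^{n-1}}\mathscr{H}^0(S\cap\pi_i^{-1}(y))\,\dr y\le C_n\,\mathscr{H}^{n-1}(S)$. Taking $S=\partial^m E$ and combining the three estimates yields, for each $i$,
\[
\int_{\mathbb{R}^{n-1}} V(\chi_E^{i,y})\,\dr y \le \int_{\mathbb{R}^{n-1}} \mathscr{H}^0\big(\pi_i^{-1}(y)\cap\partial^m E\big)\,\dr y \le C_n\,\mathscr{H}^{n-1}(\partial^m E).
\]
Summing over $i$ gives $P(E)\lesssim_n \mathscr{H}^{n-1}(\partial^m E)<\infty$, so $E$ is a set of finite perimeter. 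By Theorem~\ref{Federertheorem} one in fact recovers $\mathscr{H}^{n-1}(\partial^m E)=P(E)$ a posteriori, but only the one-sided bound above is needed for the criterion.
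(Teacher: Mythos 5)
First, a point of reference: the paper itself gives \emph{no} proof of Theorem \ref{criteria'}. It is Federer's classical criterion, quoted from \cite{Federer} with pointers to \cite{afp,eg2,Maggi,Ziemer}, so the only meaningful comparison is with the standard textbook argument (Federer 4.5.11; Evans--Gariepy \S 5.11). Your outline follows exactly that route: reduce to $\chi_E\in BV$ via the one-dimensional sections characterization, bound the variation of each section by the number of points of $\partial^m E$ on the corresponding line, and integrate the fiber counts using the Eilenberg/coarea inequality for the $1$-Lipschitz projections $\pi_i$. Those soft reductions, and the final summation, are all correct.

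The genuine gap is precisely the step you flag as ``the main obstacle'' and then do not carry out: the inclusion $\partial^{m,1}E^{i,y}\subseteq(\partial^m E)^{i,y}$ for $\mathscr{L}^{n-1}$-a.e.\ $y$. This is not a quotable technicality; it \emph{is} the theorem --- everything else in your outline is routine. Worse, the justification you offer for it, ``the fine theory of one-dimensional sections of sets of finite perimeter,'' is circular: that theory is developed for sets already known to have (locally) finite perimeter, which is exactly the conclusion you are trying to reach. What is required is a direct argument valid for an arbitrary measurable set, of the following shape (this is the content of \cite[\S 5.11]{eg2}): for a.e.\ $y$ the fiber $\pi_i^{-1}(y)\cap\partial^m E$ is finite, and if $t_0<t_1$ are, respectively, a one-dimensional density point of the section $E^{i,y}$ and of its complement, then some $s\in(t_0,t_1)$ must satisfy $y+se_i\in\partial^m E$; one cannot get this from Fubini, since the points at issue live in one-dimensional null sets, and it genuinely fails on exceptional lines (a set whose complement is an exponentially thin cusp gives a line with a section jump at a point of $E^1$). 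The same issue infects your one-sentence treatment of $V(\chi_A)\le\mathscr{H}^0(\partial^{m,1}A)$ for $A\subset\mathbb{R}$: that inequality is the $n=1$ case of Theorem \ref{criteria'} itself and needs the same dichotomy argument (each open interval disjoint from $\partial^{m,1}A$ is $\mathscr{L}^1$-a.e.\ contained in $A$ or in $A^c$). Until these two interlocking lemmas are proved, the proposal is an accurate table of contents for the standard proof rather than a proof.
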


Next, we introduce the definition of normal traces:
\begin{definition}\label{trac}
Given $\FF \in \DM^{p}(\Omega)$ for $1 \le p \le \infty$, and a bounded Borel set $E \subset \Omega$,
then the \textit{normal trace} of $\FF$ on $\partial E$ is defined as
\begin{equation} \label{normal trace def}
\ban{\FF \cdot \nu, \phi}_{\partial E} := \int_{E} \phi \, \dr \div \FF + \int_{E} \FF \cdot \nabla \phi \, \dr x
\qquad \mbox{for any $\phi \in \Lip_{c}(\R^{n})$}.
\end{equation}
\end{definition}

The following theorem was proved in
\cite{ChenTorres,Silhavy1} (see also \cite{ctz,ComiTorres}):
\begin{theorem} \label{toy'}
If $E \Subset \Omega$ is a set of finite perimeter,
and if $\FF \in \mathcal{DM}^{\infty}(\Omega)$,
then there exists $\mathfrak{F}_{\rm i} \cdot \nu_{E} \in L^{\infty}(\partial^* E; \mathscr{H}^{n-1})$ such that
\begin{align}\label{mia}
\int_{E^1} \, \dr \div \FF
=-2 \overline{\chi_E \FF \cdot D\chi_E}(\partial^* E)=-\int_{\partial^* E}\mathfrak{F}_{\rm i} \cdot \nu_{E}\,\dr\mathscr{H}^{n-1},
\end{align}
where $\overline{\chi_E \FF\cdot D\chi_E}$ is the weak-star limit of measures $\chi_E \FF \cdot \nabla (\chi_E * \rho_{\epsilon})$.
\end{theorem}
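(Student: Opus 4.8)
The plan is to deduce \eqref{mia} directly from the Chen--Frid product rule (Theorem \ref{productruleinfty}) applied to $g=\chi_E$, exploiting the idempotency $\chi_E^2=\chi_E$ to produce the factor $2$ on $\partial^* E$. First I would record that, since $E$ is a set of finite perimeter, $\chi_E\in BV(\Omega)\cap L^\infty(\Omega)$, so $\chi_E\FF\in\DM^\infty(\Omega)$ and
\[
\div(\chi_E\FF)=\chi_E^*\,\div\FF+\overline{\FF\cdot D\chi_E},
\]
where $\chi_E^*$ is the precise representative, with $\chi_E^*=1$ on $E^1$, $\chi_E^*=0$ on $E^0$, and $\chi_E^*=\tfrac12$ on $\partial^* E$ (as $\partial^* E\subset E^{1/2}$). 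By the last bound in Theorem \ref{productruleinfty}, $\overline{\FF\cdot D\chi_E}$ is absolutely continuous with respect to $|D\chi_E|=\Haus{n-1}\res\partial^* E$ and hence concentrated on $\partial^* E$.

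Next I would apply the product rule a second time to the pair $g=\chi_E$ and $\GG=\chi_E\FF$. Since $\chi_E\GG=\chi_E^2\FF=\chi_E\FF=\GG$, this gives $\div\GG=\chi_E^*\,\div\GG+\overline{\GG\cdot D\chi_E}$, that is,
\[
(1-\chi_E^*)\,\div(\chi_E\FF)=\overline{\chi_E\FF\cdot D\chi_E}.
\]
The right-hand side is concentrated on $\partial^* E$, where $\chi_E^*=\tfrac12$ holds $\Haus{n-1}$-a.e., and thus $|\div(\chi_E\FF)|$-a.e. since $\div(\chi_E\FF)\ll\Haus{n-1}$ by \eqref{cui1}; restricting the last identity to $\partial^* E$ therefore yields $\div(\chi_E\FF)(\partial^* E)=2\,\overline{\chi_E\FF\cdot D\chi_E}(\partial^* E)$.

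To close the computation, I would use that $E\Subset\Omega$ forces $\chi_E\FF$ to be essentially supported in the compact set $\overline E\subset\Omega$, so that testing $\div(\chi_E\FF)$ against a cutoff identically $1$ on $\overline E$ gives $\div(\chi_E\FF)(\Omega)=0$. Decomposing $\Omega$ into $E^1$, $E^0$, and $\partial^* E$ (legitimate because $\div(\chi_E\FF)\ll\Haus{n-1}$ and $\Haus{n-1}(\partial^m E\setminus\partial^* E)=0$ by Theorem \ref{Federertheorem}), and reading off from the first product-rule identity that $\div(\chi_E\FF)(E^1)=\div\FF(E^1)$ and $\div(\chi_E\FF)(E^0)=0$ (since $\overline{\FF\cdot D\chi_E}$ lives on $\partial^* E$), I obtain
\[
\int_{E^1}\,\dr\div\FF=-\div(\chi_E\FF)(\partial^* E)=-2\,\overline{\chi_E\FF\cdot D\chi_E}(\partial^* E),
\]
which is the first equality in \eqref{mia}. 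For the second, the bound $|\overline{\chi_E\FF\cdot D\chi_E}|\le\|\chi_E\FF\|_{L^\infty}|D\chi_E|\le\|\FF\|_{L^\infty}\Haus{n-1}\res\partial^* E$ from Theorem \ref{productruleinfty} together with the Radon--Nikodym theorem furnishes a density $h\in L^\infty(\partial^* E;\Haus{n-1})$ with $\overline{\chi_E\FF\cdot D\chi_E}=h\,\Haus{n-1}\res\partial^* E$; setting $\mathfrak{F}_{\rm i}\cdot\nu_E:=2h$ completes the formula.

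The main obstacle I anticipate is the correct bookkeeping in the second step: the idempotency trick producing the factor $(1-\chi_E^*)$, and hence the factor $2$ on $\partial^* E$, must be combined with a careful account of which of $E^1$, $E^0$, $\partial^* E$ each of the measures $\div\FF$, $\overline{\FF\cdot D\chi_E}$, and $\overline{\chi_E\FF\cdot D\chi_E}$ charges. Everything hinges on $|\div\FF|+|\div(\chi_E\FF)|\ll\Haus{n-1}$ (so that Federer's theorem lets me discard $\partial^m E\setminus\partial^* E$) and on the two barred measures being concentrated on $\partial^* E$ with $L^\infty$ densities, which is precisely what Theorem \ref{productruleinfty} provides.
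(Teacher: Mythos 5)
Your argument is correct, and it reaches \eqref{mia} by a mechanism that differs in its key step from the paper's. The paper's proof (Proposition \ref{hkey}, Corollary \ref{chais}, Lemma \ref{bangz}, Corollary \ref{aoye}, Theorem \ref{GGF}) first derives the identity $\int_{E^1}\dr\div(\phi\GG)+\frac12\int_{\redb E}\dr\div(\phi\GG)=-\int_{\redb E}\phi\,\dr\overline{\GG\cdot D\chi_E}$ by testing $\div(\phi\GG)$ against $\chi_E*\rho_\epsilon$ and passing to the limit, and then produces the factor $2$ by splitting $\GG=\chi_E\GG+\chi_{E^c}\GG$, applying that identity to each piece, and using the support property of Corollary \ref{chais} to kill the interior contributions; this yields the measure identity \eqref{h3} for $\div\FF\res\redb E$, which is substituted back into \eqref{h4'}. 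You instead take Theorem \ref{productruleinfty} as a black box and apply it a second time to the pair $(\chi_E,\chi_E\FF)$, so that the idempotency $\chi_E^2=\chi_E$ gives $(1-\chi_E^*)\div(\chi_E\FF)=\overline{\chi_E\FF\cdot D\chi_E}$ and hence the factor $2$ directly on $\redb E$, where $\chi_E^*=\tfrac12$; the formula then follows from $\div(\chi_E\FF)(\Omega)=0$ (legitimate, since $\chi_E\FF$ vanishes outside $E\Subset\Omega$, so its divergence is supported in $\overline{E}$ and can be tested against a cutoff equal to $1$ there) together with the decomposition of $\Omega$ into $E^1$, $E^0$, and $\partial^m E$ modulo $\Haus{n-1}$-null sets. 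Your route is shorter and never needs the measure $\overline{\chi_{E^c}\FF\cdot D\chi_E}$ nor the identification of $\div\FF\res\redb E$ itself; the paper's route is more self-contained (Proposition \ref{hkey} essentially re-derives the needed piece of the product rule from the mollification) and yields the intermediate identities \eqref{h3}--\eqref{h4}, which are reused elsewhere in the paper. All the ingredients you invoke --- $\chi_E^*=1,\tfrac12,0$ on $E^1,\redb E,E^0$, the bound $|\overline{\FF\cdot D\chi_E}|\le\|\FF\|_{L^\infty}|D\chi_E|$, the absolute continuity \eqref{cui1}, and Federer's Theorem \ref{Federertheorem} --- are available in the paper, so the argument is complete as written.
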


\section{Interior Approximation by sets of uniformly bounded perimeter}

In this section, we prove that \eqref{neat-2} characterizes the sets that can be approximated
by sets of uniformly bounded perimeter from the interior.

\begin{theorem}\label{brandnew}
Let $\Omega$ be a bounded set with $|\Omega|>0$.
Then there exists smooth sets $E_k \Subset \Omega$ such that $E_k \rightarrow \Omega$ in $L^1$ and  $\sup_k P(E_k) < \infty$
if and only if \eqref{neat-2} is satisfied.
\end{theorem}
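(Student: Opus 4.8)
The plan is to treat the two implications separately, after a common reduction. First I would record that $\partial^m\Omega\subseteq\partial\Omega\setminus\Omega^0$, so \eqref{neat-2} forces $\mathscr{H}^{n-1}(\partial^m\Omega)<\infty$ and hence, by Federer's criterion (Theorem \ref{criteria'}), that $\Omega$ is a set of finite perimeter; moreover $|\Omega^1\triangle\Omega|=0$, since $\Omega^1\setminus\Omega\subseteq\partial\Omega\cap\Omega^1$ is $\mathscr{H}^{n-1}$-finite and therefore Lebesgue-null. Decomposing $\partial\Omega\setminus\Omega^0=\partial^*\Omega\;\sqcup\;(\partial^m\Omega\setminus\partial^*\Omega)\;\sqcup\;(\partial\Omega\cap\Omega^1)$, the first piece has measure $P(\Omega)$ and the second is $\mathscr{H}^{n-1}$-null (Theorem \ref{Federertheorem}), so in both directions everything reduces to controlling the crack set $C:=\partial\Omega\cap\Omega^1$.

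For the \textbf{sufficiency} direction I would build the approximants by combining a fine covering with a mollification. Given $k$, cover $\Gamma:=\partial\Omega\setminus\Omega^0$ by balls $\{B(x_i,r_i)\}$ of radii $<1/k$ with $\sum_i r_i^{n-1}\le\mathscr{H}^{n-1}(\Gamma)+1$, put $U_k=\bigcup_i B(x_i,r_i)$ and $W_k=\Omega\setminus\overline{U_k}$. By subadditivity of perimeter, $P(W_k)\le P(\Omega)+\sum_i P(B(x_i,r_i))\le P(\Omega)+n\omega_n(\mathscr{H}^{n-1}(\Gamma)+1)$, uniformly in $k$. I would then set $E_k=\{\chi_{W_k}\ast\rho_{\epsilon_k}>t_k\}$ with $t_k\in(\tfrac12,1)$ chosen, via Sard's theorem and the coarea bound $\int_{1/2}^1 P(\{\chi_{W_k}\ast\rho_{\epsilon_k}>t\})\,\dr t\le\int|\nabla(\chi_{W_k}\ast\rho_{\epsilon_k})|\le P(W_k)$, so that $E_k$ is smooth with $P(E_k)\le 2P(W_k)$. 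The crucial point is compact containment: if some level set accumulated at a point $z\in\partial\Omega$, then $|B_{2\epsilon}(z)\cap W_k|\ge\tfrac12|B_\epsilon|$ along a subsequence, forcing the upper density of $\Omega$ at $z$ to be positive, i.e. $z\in\Gamma\subseteq U_k$; but then $W_k$ is empty near $z$, a contradiction. By compactness of $\partial\Omega$ this yields a threshold $\epsilon_0(k)$ below which $E_k\Subset\Omega$. Finally $|E_k\triangle\Omega|\le|E_k\triangle W_k|+|\Omega\cap\overline{U_k}|$, where the first term tends to $0$ as $\epsilon_k\to0$ and the second is $\le\omega_n k^{-1}(\mathscr{H}^{n-1}(\Gamma)+1)$; a diagonal choice of $\epsilon_k$ then delivers $E_k\to\Omega$ in $L^1$ with $\sup_k P(E_k)<\infty$.

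For \textbf{necessity}, suppose smooth $E_k\Subset\Omega$ with $E_k\to\Omega$ in $L^1$ and $\sup_kP(E_k)=M<\infty$. Lower semicontinuity of perimeter gives $P(\Omega)\le M$, so $\Omega$ has finite perimeter and the pieces $\partial^*\Omega$ and $\partial^m\Omega\setminus\partial^*\Omega$ of $\Gamma$ are already controlled; passing to a subsequence, $|D\chi_{E_k}|\weakstarto\lambda$ with $\lambda(\R^n)\le M$ and $\lambda\ge|D\chi_\Omega|=\mathscr{H}^{n-1}\res\partial^*\Omega$. It remains to bound $\mathscr{H}^{n-1}(C)$. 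The mechanism is two-sided: since $\overline{E_k}\subset\Omega$, each $E_k$ must avoid $C$, yet $L^1$-convergence forces $E_k$ to fill both density-one sides of each crack point, so the boundaries $\partial E_k$ develop two sheets collapsing onto $C$; morally this yields $\liminf_k P(E_k)\ge P(\Omega)+2\mathscr{H}^{n-1}(C)$, and more robustly a positive lower $(n-1)$-density of $\lambda$ on $C$. I would then invoke the density comparison theorem to conclude $\mathscr{H}^{n-1}(C)\le C(n)\,\lambda(\R^n)\le C(n)M<\infty$, which is \eqref{neat-2}.

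\textbf{The main obstacle} is precisely this two-sided lower-density estimate on the possibly irregular crack set $C$. The difficulty is that the measure-theoretic blow-up of $\Omega$ at a point of $C$ is all of $\R^n$ — the crack is \emph{invisible} to the intrinsic blow-up — so the required mass of $\lambda$ cannot be read off from $\Omega$ and must be extracted from the global avoidance $\overline{E_k}\subset\Omega$. A naive single-point puncture estimate only produces perimeter at the vanishing scale $\dist(x,E_k)$ and is too weak; the genuine contribution comes from the extended two-sheet structure along $C$. I expect to handle this by a covering/blow-up argument in the spirit of the one-sided approximation technology of Chen--Torres--Ziemer \cite{ctz} and Comi--Torres \cite{ComiTorres} cited above, controlling $\mathscr{H}^{n-1}(C)$ through the weak-$*$ limit of the two-sided perimeter measures.
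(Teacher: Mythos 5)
Your reduction to the crack set $C=\partial\Omega\cap\Omega^1$ is the right starting point and matches the paper, but both implications as you propose them have genuine gaps.

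\textbf{Sufficiency.} You cover only $\Gamma=\partial\Omega\setminus\Omega^0$ and rely on the level $t_k>\tfrac12$ of the mollification of $W_k=\Omega\setminus\overline{U_k}$ to stay away from $\partial\Omega\cap\Omega^0$ automatically. This step fails. Your implication ``the level sets accumulate at $z\in\partial\Omega$ $\Rightarrow$ $\Omega$ has positive upper density at $z$'' needs the high-density balls to be centered at (or within $O(\epsilon)$ of) the \emph{fixed} point $z$, whereas the offending points vary with $\epsilon$ and may approach $z$ far more slowly than $\epsilon\to 0$. Concretely, let $\Omega=\bigcup_m A_m$ with $A_m=B_{\epsilon_m}(z_m)\setminus\overline{B_{\delta_m}(z_m)}$, $z_m=(2^{-m},0,\dots,0)$, $\epsilon_m=4^{-m}$, $\delta_m\ll\epsilon_m$. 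Then $0\in\partial\Omega\cap\Omega^0$ and \eqref{neat-2} holds; covering $\Gamma$ (the spheres $\partial B_{\epsilon_m}(z_m)\cup\partial B_{\delta_m}(z_m)$) by balls of radius $\ll\delta_m$ centered on those spheres is admissible for your prescription, since $\sum_i r_i^{n-1}$ does not depend on how finely a sphere is chopped, yet it leaves the bulk of each annulus inside $W_k$, so $(\chi_{W_k}\ast\rho_{\epsilon_m})(z_m)>t$ although $z_m\notin\overline{\Omega}$. Hence $E_{k,\epsilon}\not\Subset\Omega$ for $\epsilon$ near every $\epsilon_m$ and no threshold $\epsilon_0(k)$ exists. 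The missing ingredient is exactly what the paper supplies: one must \emph{also} cover $\partial\Omega\cap\Omega^0$, by balls $B_r(x)$ in which $|\Omega\cap B_r(x)|<\tfrac12|B_r(x)|$, and control the perimeter these balls add via the estimate $\mathscr{H}^{n-1}(\Omega^1\cap\partial B_r(x))\lesssim_n P(\Omega;B_r(x))$, obtained by applying the Gauss--Green formula to $\FF(y)=y-x$ on $\Omega\cap B_r(x)$ and then summing over a Besicovitch family; note that $\sum r^{n-1}$ over these balls cannot be controlled, since $\mathscr{H}^{n-1}(\partial\Omega\cap\Omega^0)$ may well be infinite, which is why the perimeter must be bounded by $P(\Omega)$ rather than by a Hausdorff premeasure.

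\textbf{Necessity.} Here you explicitly leave the key estimate open (``morally this yields\dots I expect to handle this by a covering/blow-up argument''), so this direction is not proved. The resolution is in fact much more elementary than the two-sheet/weak-$*$ limit machinery you sketch: apply the covering argument to $\Omega\setminus E_i$, not to $\Omega$ or to $\lim|D\chi_{E_k}|$. At $x\in\partial\Omega\cap\Omega^1$ the set $\Omega\setminus E_i$ has density exactly $1$ (because $x\in\Omega^1$ while $x\notin\overline{E_i}$), yet $|\Omega\setminus E_i|$ is small; by continuity there is a radius $r=r(x,i)$ with $|B_r(x)\cap(\Omega\setminus E_i)|=\tfrac12\omega_n r^n$, and the relative isoperimetric inequality gives $P(\Omega\setminus E_i;B_r(x))\gtrsim r^{n-1}$. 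A Vitali cover by such balls yields $\mathscr{H}^{n-1}_{5\delta_i}(\partial\Omega\cap\Omega^1)\lesssim_n P(\Omega\setminus E_i)=P(\Omega)+P(E_i)$, and $\delta_i=\sup_x r(x,i)\to 0$ because $|\Omega\setminus E_i|\to 0$. This sidesteps entirely the blow-up of $\Omega$ (whose blindness to the crack you rightly worry about, but which is irrelevant once you look at $\Omega\setminus E_i$), and no factor $2\mathscr{H}^{n-1}(C)$ or lower-density estimate for the limit measure $\lambda$ is needed.
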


\begin{proof} We divide the proof into two steps.

\smallskip
1. We first show the {\it ``only if''} part.
Let $E_i$ be the assumed approximating sequence. Then, by the lower semicontinuity  of $P(\cdot)$ (see \cite[Theorem 5.2.1]{Ziemer}),
we know that $\Omega$ is of finite perimeter.
In order to obtain \eqref{neat-2}, by Theorem \ref{Federertheorem}, it suffices to show
\begin{align}\label{neat'}
\mathscr{H}^{n-1}(\partial \Omega \cap \Omega^1)<\infty,
\end{align}
since $\partial\Omega\setminus \Omega^0=(\partial\Omega\cap \Omega^1)\cup \partial^m\Omega$.

\smallskip
Since $E_i \Subset\Omega$ for each $i$, by definition of the measure-theoretic interior, we have
$$
\lim_{r \rightarrow 0} \frac{|B_r(x) \cap (\Omega \setminus E_i)|}{\omega_nr^n}=1 \qquad
\, \mbox{for all $x \in \partial \Omega \cap \Omega^1$}.
$$
Therefore, for any $x  \in \partial \Omega \cap \Omega^1$, we can choose $0<r<\infty$ such that
\begin{align*}
\frac{|B_r(x) \cap (\Omega \setminus E_i)|}{\omega_nr^n}=\frac{1}{2}.
\end{align*}
From the relative isoperimetric inequality (see {\it e.g.} \cite[Remark 12.38]{Maggi}), we have
\begin{align}\label{sq}
P \left(\Omega \setminus E_i;B_r(x)\right)
&\,\ge c(n) \min\big\{|B_r(x) \cap (\Omega \setminus E_i)|^{\frac{n-1}{n}}, |B_r(x) \setminus (\Omega \setminus E_i)|^{\frac{n-1}{n}}\big\}\nonumber\\[2mm]
&\, = c_1(n) r^{n-1}.
\end{align}
Thus, by Vitali's covering theorem, we can find a family of countable disjoint balls $B_{r_j}(x_j)$ such that
\begin{align}
&\partial \Omega \cap \Omega^1 \subset \cup_j B_{5r_j}(x_j), \label{61}\\[1mm]
&\frac{|B_{r_j}(x_j) \cap (\Omega \setminus E_i)|}{\omega_n r_j^n}=\frac{1}{2},\label{62}\\[1mm]
&r_j^{n-1} \lesssim_n P\left(\Omega \setminus E_i;B_{r_j}(x_j)\right).\label{63}
\end{align}
Let $\delta_i=\sup_{j} r_j$. By \eqref{61}--\eqref{63}, we have
\begin{align}
\mathscr{H}_{5\delta_i}^{n-1}(\partial \Omega \cap \Omega^1)
\le &\, n\omega_n 5^{n-1} \sum_j r_j^{n-1}\nonumber\\
\lesssim_n &\, \sum_j P\left(\Omega \setminus E_i;B_{r_j}(x_j)\right)\nonumber \\
\le &\, P(\Omega \setminus E_i)\nonumber\\[1mm]
=&\, P(\Omega)+P(E_i),
\end{align}
where we have used that $\{B_{r_j}(x_j)\}_j$ are disjoint for the last inequality.

By \eqref{62}, we have
$$
\limsup_{i \rightarrow \infty} \delta_i\lesssim_n
\big(\frac{2}{\omega_n}\big)^{\frac{1}{n}}
\limsup_{i \rightarrow \infty} |\Omega\setminus E_i|^{\frac{1}{n}}=0.
$$
Therefore, by definition of the Hausdorff measure and from the discussion above, we have
\begin{align}
\mathscr{H}^{n-1}(\partial \Omega \cap \Omega^1)
=\lim_{i \rightarrow \infty} \mathscr{H}_{5\delta_i}^{n-1}(\partial \Omega \cap \Omega^1)
\lesssim_n P(\Omega)+ \limsup_{i \rightarrow \infty} P(E_i)
< \infty.
\end{align}

\medskip
2. Now we show the {\it ``if''} part. By Theorem \ref{criteria'}, under this assumption, $\Omega$ is of finite perimeter.

For any $\delta>0$ and $x \in \partial \Omega \cap \Omega^0$, by definition of the measure-theoretic exterior,
we can choose $0<r<\delta$ such that
\begin{align}\label{600}
\frac{|\Omega \cap B_r(x)|}{|B_r(x)|}<\frac{1}{2}.
\end{align}
By the relative isoperimetric inequality (see {\it e.g.} \cite[Proposition 12.37]{Maggi}),
there is a constant $c(n)$ such that
\begin{align}\label{601}
|\Omega \cap B_r(x)|^{\frac{n-1}{n}} \le c(n) P\left(\Omega; B_r(x)\right).
\end{align}
From the coarea formula, it follows that there exists a constant $r$
such that $\mathscr{H}^{n-1}(\partial B_r(x) \cap \partial^m \Omega)=0$,
while \eqref{600}--\eqref{601} still hold.
Therefore, applying the classical Gauss-Green formula \eqref{segunda2} to the vector field $\FF(y)=y-x$
on the set of finite perimeter $\Omega \cap B_r(x)$ and using \cite[Theorem 16.3]{Maggi},
we have
\begin{align}
n|\Omega \cap B_r(x)|=&\,\int_{\Omega \cap B_r(x)} \div_y (y-x)\,\dr y \nonumber \\
=&\, -\int_{\Omega^1 \cap \partial B_r(x)} (y-x) \cdot \nu_{B_r(x)}(y)\,\dr \mathscr{H}^{n-1}
  -\int_{B_r(x) \cap \partial^* \Omega} (y-x) \cdot \nu_{\Omega}(y)\,\dr \mathscr{H}^{n-1} \nonumber \\
\ge &\, r\mathscr{H}^{n-1}(\Omega^1 \cap \partial B_r(x))-rP\left(\Omega;B_r(x)\right).
\end{align}
This implies
\begin{align}\label{602}
r\mathscr{H}^{n-1}(\Omega^1 \cap \partial B_r(x)) \le n|\Omega \cap B_r(x)| + rP\left(\Omega;B_r(x)\right).
\end{align}
Moreover, it is clear that
\begin{align}
\label{603}
\frac{|\Omega \cap B_r(x)|^{\frac{1}{n}}}{r} \le \omega_n^{\frac{1}{n}}.
\end{align}
Combining \eqref{601} with \eqref{602}--\eqref{603}, we have
\begin{align}
\mathscr{H}^{n-1}(\Omega^1 \cap \partial B_r(x))
\le &\, \frac{n|\Omega \cap B_r(x)|}{r}+P(\Omega;B_r(x))\nonumber \\
= &\, n  |\Omega \cap B_r(x)|^{\frac{n-1}{n}}  \frac{|\Omega \cap B_r(x)|^{\frac{1}{n}}}{r} + P\left(\Omega;B_r(x)\right) \nonumber\\
\le &\, nc(n)\omega_n^{\frac{1}{n}} P\left(\Omega;B_r(x)\right)+P\left(\Omega;B_r(x)\right),
\end{align}
that is,
\begin{align}
\label{key61}
\mathscr{H}^{n-1}(\Omega^1 \cap \partial B_r(x)) \lesssim_n P\left(\Omega;B_r(x)\right).
\end{align}
From Besicovitch's covering theorem (see {\it e.g.} \cite[Theorem 5.1]{Maggi}),
it follows that there exist $\mathcal{F}_i$, $i=1,2, \cdots, \xi(n)$, so that
each family $\mathcal{F}_i$ contains countably disjoint balls with radius less than $\delta$
satisfying
$$
\partial \Omega \cap \Omega^0 \subset \cup_{i=1}^{\xi(n)} \cup_{B \in \mathcal{F}_i}B
$$
and, for each $B_r(x) \in \cup_{i=1}^{\xi(n)}\mathcal{F}_i$,
\eqref{key61} holds.
Our assumption \eqref{neat-2} implies the existence of a family $\mathcal{F}_0$ of balls such that
\begin{align}
&\sup_{B \in \mathcal{F}_0} \text{diam} (B) \le 2\delta,\label{3.22a}\\
&\partial \Omega \setminus \Omega^0 \subset \cup_{B \in \mathcal{F}_0} B,\label{3.22b}\\
&\sum_{B \in \mathcal{F}_0}\mathscr{H}^{n-1}(\partial B) \lesssim_n  \mathscr{H}^{n-1}(\partial \Omega \setminus \Omega^0 ).\label{606}
\end{align}
We may also require that, for any $B_r(x) \in \mathcal{F}_0$,
\begin{align}\label{605}
\mathscr{H}^{n-1}(\partial B_r(x) \cap \partial^* \Omega)=0.
\end{align}
Since there are countably many balls in $\cup_{i=0}^{\xi(n)}\mathcal{F}_i$,
we can assume that \eqref{605} holds for any $B_r(x) \in \cup_{i=0}^{\xi(n)}\mathcal{F}_i$.

Since $\partial \Omega$ is a compact set,
we may find finite balls $\{B_{r_k}(z_k)\}_{k=1}^N \subset \cup_{i=0}^{\xi(n)}\mathcal{F}_i$
covering $\partial \Omega$.
Let $E=\Omega \setminus \cup_{k=1}^N B_{r_k}(z_k)$ so that $E \Subset\Omega$.
We estimate
\begin{align*}
P(E)=&\,P\left(\Omega \setminus \cup_{k=1}^N B_{r_k}(z_k)\right)\\
=&\, P\left(\cup_{k=1}^N B_{r_k}(z_k); \Omega^1\right) + P\left(\Omega; \mathbb{R}^n \setminus \cup_{k=1}^N B_{r_k}(z_k)\right)\\
=&\, P\left(\cup_{k=1}^N B_{r_k}(z_k); \Omega^1\right)\\
\le &\, \sum_{k=1}^N P(B_{r_k}(z_k) ; \Omega^1)\\
\le &\, \sum_{i=1}^{\xi(n)} \sum_{B \in \mathcal{F}_i} \mathscr{H}^{n-1}(\partial B \cap \Omega^1) + \sum_{B \in \mathcal{F}_0} \mathscr{H}^{n-1}(\partial B \cap \Omega^1),
\end{align*}
where we have used  \eqref{605}, \cite[Theorem 16.3, (16.11)]{Maggi}, and $\partial \Omega \subset \cup_{k=1}^{N} B_{r_k}(z_k)$.
Using \eqref{key61}, \eqref{606}, and the fact that  the balls in $\mathcal{F}_i$ are disjoint for $1 \le i \le \xi(n)$,
we have
\begin{align*}
P(E) \lesssim_n &\, \sum_{i=1}^{\xi(n)} \sum_{B \in \mathcal{F}_i} P(\Omega; B) + \mathscr{H}^{n-1}(\partial \Omega \setminus \Omega^0)\\
\lesssim_n &\, \xi(n) P(\Omega) +  \mathscr{H}^{n-1}(\partial \Omega \setminus \Omega^0)\\
\lesssim_n &\,   \mathscr{H}^{n-1}(\partial \Omega \setminus \Omega^0).
\end{align*}
Since $0<r<\delta$ for any $B_r(x)$ in the cover of $\partial \Omega$, we can estimate
\begin{align*}
|\Omega \setminus E| \le & \,\sum_{i=0}^{\xi(n)} \sum_{B \in \mathcal{F}_i } |B \cap \Omega| \\
\lesssim_n & \,\sum_{i=1}^{\xi(n)} \sum_{B \in \mathcal{F}_i } |B \cap \Omega|^{\frac{1}{n}} |B \cap \Omega|^{\frac{n-1}{n}}
  + \delta \sum_{B \in \mathcal{F}_0 }\mathscr{H}^{n-1}(\partial B)\\
\lesssim_n &\, \delta \bigg(\sum_{i=1}^{\xi(n)} \sum_{B \in \mathcal{F}_i }P(\Omega;B)+\mathscr{H}^{n-1}(\partial \Omega\setminus \Omega^0)\bigg)\\
\lesssim_n &\, \delta\,\big(\xi(n) P(\Omega) +\mathscr{H}^{n-1}(\partial \Omega\setminus \Omega^0)\big)\\[1mm]
\lesssim_n &\, \delta \, \mathscr{H}^{n-1}(\partial \Omega\setminus \Omega^0),
\end{align*}
where we have used
\eqref{601}, \eqref{603}, \eqref{606}, and the fact that the balls in $\mathcal{F}_i$ are disjoint for $1\le i \le \xi(n)$.
Since $|\Omega|>0$, the previous construction shows that, for each $\delta > 0$ small, we can construct a set $E_{\delta} \ne \emptyset$ such that
\begin{align*}
    E_{\delta} \Subset\Omega, \qquad |\Omega \setminus E_{\delta}| \lesssim_n \delta \, \mathscr{H}^{n-1}(\partial \Omega\setminus \Omega^0),
\end{align*}
and
\begin{align}
\label{tiao'}
    P(E_{\delta}) \lesssim_n \mathscr{H}^{n-1}(\partial \Omega\setminus \Omega^0).
\end{align}
Choosing a sequence $\delta_k \to 0$ yields that Theorem \ref{brandnew} holds for sets $E_{\delta_k}$
with Lipschitz boundary.

Then we can employ the standard smoothing arguments ({\it e.g.} \cite{Maggi}) for sets $E_{\delta_k}$ to conclude that
smooth sets $E_k$ can be chosen  so that  Theorem \ref{brandnew} holds for $E_k$.
\end{proof}

Similarly, we have

\begin{corollary}\label{chidun}
Let $\Omega$ be a bounded set. Then there exist smooth sets $F_k \Supset \Omega$ such that $F_k \rightarrow \Omega$ in $L^1$ and  $\sup_k P(F_k) < \infty$
if and only if $$\mathscr{H}^{n-1}(\partial \Omega \setminus \Omega^1)<\infty.$$
\end{corollary}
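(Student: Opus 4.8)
The plan is to deduce Corollary \ref{chidun} from Theorem \ref{brandnew} by complementation, exploiting that the two conditions are exact duals. Indeed, for the bounded set $\Omega$ write $\Omega^c=\mathbb{R}^n\setminus\Omega$; then $\partial\Omega^c=\partial\Omega$, $(\Omega^c)^0=\Omega^1$, $(\Omega^c)^1=\Omega^0$, and $P(F)=P(\mathbb{R}^n\setminus F)$ for every set $F$ of finite perimeter. Hence $\mathscr{H}^{n-1}(\partial\Omega\setminus\Omega^1)=\mathscr{H}^{n-1}(\partial\Omega^c\setminus(\Omega^c)^0)$, which is precisely condition \eqref{neat-2} for $\Omega^c$, and approximating $\Omega$ from the exterior by $F_k\Supset\Omega$ corresponds to approximating $\Omega^c$ from the interior by $\mathbb{R}^n\setminus F_k$. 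Since Theorem \ref{brandnew} is stated for bounded sets, I would fix a ball $B_R$ with $\overline\Omega\Subset B_R$ and run the argument inside $B_R$, the outer sphere $\partial B_R$ contributing only the fixed finite perimeter $n\omega_n R^{n-1}$ to every competitor.

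Concretely, I would prove the two implications by mirroring the proof of Theorem \ref{brandnew} with the roles of $\Omega^0$ and $\Omega^1$, and of interior and exterior, interchanged. For the \emph{only if} part, given smooth $F_i\Supset\Omega$ with $F_i\to\Omega$ in $L^1$ and $\sup_iP(F_i)<\infty$, lower semicontinuity gives that $\Omega$ has finite perimeter, and by Theorem \ref{Federertheorem} it suffices to bound $\mathscr{H}^{n-1}(\partial\Omega\cap\Omega^0)$. For $x\in\partial\Omega\cap\Omega^0$ the set $F_i\setminus\Omega$ has density $1$ at $x$ (since $x\in\overline\Omega\subset\mathrm{int}\,F_i$ while $\Omega$ has density $0$), so I can select $0<r<\infty$ with $|B_r(x)\cap(F_i\setminus\Omega)|=\tfrac12\omega_nr^n$ and invoke the relative isoperimetric inequality to get $r^{n-1}\lesssim_nP(F_i\setminus\Omega;B_r(x))$. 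A Vitali covering of $\partial\Omega\cap\Omega^0$ together with $P(F_i\setminus\Omega)\le P(\Omega)+P(F_i)$ and $\sup_j r_j\to0$ (because $|F_i\setminus\Omega|\to0$) then yields $\mathscr{H}^{n-1}(\partial\Omega\cap\Omega^0)\lesssim_nP(\Omega)+\limsup_iP(F_i)<\infty$, exactly as in Step 1 of Theorem \ref{brandnew}.

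For the \emph{if} part I would again follow Step 2 with $\Omega$ replaced by $\Omega^c$. Applying the key estimate \eqref{key61} to $\Omega^c$ gives, for $x\in\partial\Omega\cap\Omega^1$ and suitable small $r$ (chosen via the coarea formula so that $\mathscr{H}^{n-1}(\partial B_r(x)\cap\partial^*\Omega)=0$), the bound $\mathscr{H}^{n-1}(\Omega^0\cap\partial B_r(x))\lesssim_nP(\Omega;B_r(x))$. Covering $\partial\Omega\cap\Omega^1$ by Besicovitch and $\partial\Omega\setminus\Omega^1$ by a family $\mathcal{F}_0$ as in \eqref{3.22a}--\eqref{606}, extracting a finite subcover $\{B_{r_k}(z_k)\}$ of the compact set $\partial\Omega$, and now \emph{enlarging} $\Omega$ by setting $F:=\Omega\cup\bigcup_kB_{r_k}(z_k)$, the same perimeter and volume computations give $P(F)\lesssim_n\mathscr{H}^{n-1}(\partial\Omega\setminus\Omega^1)$ and $|F\setminus\Omega|\lesssim_n\delta\,\mathscr{H}^{n-1}(\partial\Omega\setminus\Omega^1)$. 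Sending $\delta\to0$ and smoothing produces the desired $F_k$.

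The main obstacle I anticipate is the dual of the compact containment $E\Subset\Omega$: I must check that $F$ is not merely a superset of $\Omega$ but an open neighborhood with $\Omega\Subset F$. This is where it is essential that the covering balls are taken around \emph{every} point of $\partial\Omega$, so that $\bigcup_kB_{r_k}(z_k)$ contains a full neighborhood of the compact boundary $\partial\Omega$; then $F=\Omega\cup\bigcup_kB_{r_k}(z_k)$ is open and contains $\overline\Omega$, giving $\Omega\Subset F$. The secondary technical point is the perimeter identity for the \emph{union} $\Omega\cup\bigcup_kB_{r_k}(z_k)$ (in place of the difference used in Theorem \ref{brandnew}), which again reduces, via $\partial\Omega\subset\bigcup_kB_{r_k}(z_k)$ and \cite[Theorem 16.3]{Maggi}, to estimating $\sum_k\mathscr{H}^{n-1}(\partial B_{r_k}(z_k)\cap\Omega^0)$ by $\mathscr{H}^{n-1}(\partial\Omega\setminus\Omega^1)$.
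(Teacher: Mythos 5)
Your proposal is correct and is exactly the argument the paper intends: the paper proves Corollary \ref{chidun} only by the word ``Similarly,'' meaning the dualized version of the proof of Theorem \ref{brandnew}, and your complementation dictionary ($(\Omega^c)^0=\Omega^1$, $P(F)=P(\mathbb{R}^n\setminus F)$) together with the mirrored Steps 1 and 2 (density-$\tfrac12$ radii and Vitali for the ``only if'' part; the estimate $\mathscr{H}^{n-1}(\Omega^0\cap\partial B_r(x))\lesssim_n P(\Omega;B_r(x))$, Besicovitch, and the enlargement $F=\Omega\cup\bigcup_k B_{r_k}(z_k)$ for the ``if'' part) is precisely that dual argument. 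Your attention to the two genuine points of asymmetry --- that $\bigcup_k B_{r_k}(z_k)$ covers a full neighborhood of the compact set $\partial\Omega$ so that $\Omega\Subset F$, and that the perimeter of the union is controlled by $\sum_k\mathscr{H}^{n-1}(\partial B_{r_k}(z_k)\cap\Omega^0)$ --- is exactly what is needed.
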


As a direct consequence of Theorem \ref{brandnew} and Corollary \ref{chidun}, we have
\begin{corollary}
Let $\Omega$ be a bounded set with $|\Omega|>0$. Then
$$
\mathscr{H}^{n-1}(\partial \Omega)<\infty
$$
if and only if there exist sets $E_k \Subset \Omega \Subset F_k$ such that
\begin{align*}
&\sup_k P(E_k) + \sup_k P(F_k) <\infty,\\
&E_k \rightarrow \Omega,\quad  F_k \rightarrow \Omega \,\qquad \mbox{in $L^1$}.
\end{align*}
\end{corollary}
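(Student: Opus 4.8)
The plan is to reduce the statement to the two one-sided results already established, Theorem~\ref{brandnew} and Corollary~\ref{chidun}, by means of a purely set-theoretic identity that splits the topological boundary into its interior-visible and exterior-visible pieces. First I would record the elementary observation that, since the density sets $\Omega^0$ and $\Omega^1$ are disjoint,
\begin{align*}
(\partial\Omega\setminus\Omega^0)\cup(\partial\Omega\setminus\Omega^1)
=\partial\Omega\setminus(\Omega^0\cap\Omega^1)=\partial\Omega .
\end{align*}
Combining this with the monotonicity and countable subadditivity of $\mathscr{H}^{n-1}$, and using that each of $\partial\Omega\setminus\Omega^0$ and $\partial\Omega\setminus\Omega^1$ is contained in $\partial\Omega$, yields the two-sided comparison
\begin{align*}
\max\big\{\mathscr{H}^{n-1}(\partial\Omega\setminus\Omega^0),\,\mathscr{H}^{n-1}(\partial\Omega\setminus\Omega^1)\big\}
\le \mathscr{H}^{n-1}(\partial\Omega)
\le \mathscr{H}^{n-1}(\partial\Omega\setminus\Omega^0)+\mathscr{H}^{n-1}(\partial\Omega\setminus\Omega^1).
\end{align*}
In particular, $\mathscr{H}^{n-1}(\partial\Omega)<\infty$ holds if and only if both $\mathscr{H}^{n-1}(\partial\Omega\setminus\Omega^0)<\infty$ and $\mathscr{H}^{n-1}(\partial\Omega\setminus\Omega^1)<\infty$.

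For the forward implication, assuming $\mathscr{H}^{n-1}(\partial\Omega)<\infty$, the comparison above gives both finiteness conditions. Then the \emph{if} part of Theorem~\ref{brandnew} produces smooth sets $E_k\Subset\Omega$ with $E_k\to\Omega$ in $L^1$ and $\sup_k P(E_k)<\infty$, while the \emph{if} part of Corollary~\ref{chidun} produces smooth sets $F_k\Supset\Omega$ with $F_k\to\Omega$ in $L^1$ and $\sup_k P(F_k)<\infty$. Taking the two families with the common index $k$ and adding the perimeter bounds delivers the stated conclusion $E_k\Subset\Omega\Subset F_k$ with $\sup_k P(E_k)+\sup_k P(F_k)<\infty$ and $E_k,F_k\to\Omega$ in $L^1$.

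For the reverse implication, given sets $E_k\Subset\Omega\Subset F_k$ with uniformly bounded perimeter and $L^1$ convergence to $\Omega$, I would feed $\{E_k\}$ into the \emph{only if} part of Theorem~\ref{brandnew} to obtain $\mathscr{H}^{n-1}(\partial\Omega\setminus\Omega^0)<\infty$, and $\{F_k\}$ into the \emph{only if} part of Corollary~\ref{chidun} to obtain $\mathscr{H}^{n-1}(\partial\Omega\setminus\Omega^1)<\infty$; the two-sided comparison then closes the argument. The one point demanding care, which I regard as the main (and only minor) obstacle, is that the corollary hypothesizes merely measurable sets of finite perimeter, whereas Theorem~\ref{brandnew} is phrased in terms of smooth approximants. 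Here I would note that the \emph{only if} argument of Theorem~\ref{brandnew}, and symmetrically of Corollary~\ref{chidun}, never uses smoothness: it relies solely on lower semicontinuity of the perimeter, the relative isoperimetric inequality applied to $\Omega\setminus E_k$, and a Vitali covering of $\partial\Omega\cap\Omega^1$, all of which remain valid for any sets of finite perimeter compactly contained in $\Omega$ and converging to $\Omega$ in $L^1$. Once this remark is in place, the equivalence follows immediately.
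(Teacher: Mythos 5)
Your proposal is correct and is precisely the argument the paper intends: the corollary is stated there as a direct consequence of Theorem~\ref{brandnew} and Corollary~\ref{chidun}, obtained by splitting $\partial\Omega=(\partial\Omega\setminus\Omega^0)\cup(\partial\Omega\setminus\Omega^1)$ exactly as you do. Your closing remark---that the \emph{only if} arguments use only lower semicontinuity, the relative isoperimetric inequality, and a Vitali covering, hence apply to non-smooth approximants of finite perimeter---is a worthwhile observation that the paper leaves implicit.
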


\begin{remark}
From the second part of the proof of Theorem {\rm \ref{brandnew}}, we have shown that,
if $\Omega$ satisfies \eqref{neat-2} with $|\Omega|>0$, then there exist smooth $E_k \Subset\Omega$
such that $E_k \rightarrow \Omega$ in $L^1$ and
\begin{align*}
   P(E_k) \lesssim_n \mathscr{H}^{n-1}(\partial \Omega \setminus \Omega^0).
\end{align*}
\end{remark}

\section{Extension Domains and Continuity of Traces for Bounded Divergence-measure Fields}

In this section, we prove that the sets satisfying \eqref{neat-2} are extension domains
for $L^\infty$ divergence-measure
fields. We also show the weak convergence properties of the trace operator which will be
used to establish Theorem \ref{da} in \S\,5.

\subsection{Extension domains}
Throughout the rest of the paper,
given $\FF \in \mathcal{DM}^{\infty} (\Omega)$, the extension of $\FF$ is
defined as
    \begin{equation}\label{j1}
    \tilde{\FF}(x):=
	\begin{cases}
	\FF(x) &\,\, \text{for $x \in \Omega$}, \\
    0 &\,\, \text{for $x \notin \Omega$}.
	\end{cases}
	\end{equation}

\begin{definition}
\label{Ext}
We say that $\Omega$ is an extension domain for bounded divergence-measure fields if, for any $\FF \in \mathcal{DM}^{\infty}(\Omega)$, $\tilde{\FF}$ is a divergence-measure field in
$\mathbb{R}^n${\rm ;} that is, \eqref{allthetime} holds with $\Omega=\mathbb{R}^n$ and
\begin{align}\label{ext}
    |\div \tilde{\FF}|(\mathbb{R}^n)<\infty.
\end{align}
\end{definition}

The next theorem extends \cite[Theorem 8.4]{ctz}.
\begin{theorem}
\label{ahax}
If $\Omega$ is a bounded open set satisfying \eqref{neat-2}, then $\Omega$ is an extension domain for bounded divergence-measure fields.
\end{theorem}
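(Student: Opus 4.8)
The plan is to reduce the finiteness of $|\div\tilde{\FF}|(\mathbb{R}^n)$ to a uniform bound on the normal traces of $\FF$ along a sequence of smooth sets exhausting $\Omega$ from the interior. Since \eqref{neat-2} holds, Theorem \ref{brandnew} together with the Remark following it furnishes smooth sets $E_k\Subset\Omega$ with $E_k\to\Omega$ in $L^1$ and $\sup_k P(E_k)\lesssim_n\mathscr{H}^{n-1}(\partial\Omega\setminus\Omega^0)<\infty$. Fix $\phi\in C_c^1(\mathbb{R}^n)$ with $|\phi|\le1$; by \eqref{j1} one has $\ban{\div\tilde{\FF},\phi}=-\int_\Omega\FF\cdot\nabla\phi\,\dr x$, so verifying \eqref{allthetime} for $\tilde{\FF}$ on $\mathbb{R}^n$ (equivalently \eqref{ext}) amounts to bounding $|\int_\Omega\FF\cdot\nabla\phi\,\dr x|$ by a constant independent of $\phi$.

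First I would localize the integral to $E_k$. Splitting $\int_\Omega=\int_{E_k}+\int_{\Omega\setminus E_k}$ and using that $\FF\in L^\infty(\Omega;\mathbb{R}^n)$ while $\nabla\phi$ is bounded, the remainder satisfies $|\int_{\Omega\setminus E_k}\FF\cdot\nabla\phi\,\dr x|\le\|\FF\|_{L^\infty}\|\nabla\phi\|_{L^\infty}|\Omega\setminus E_k|\to0$ as $k\to\infty$, since $E_k\subset\Omega$ and $E_k\to\Omega$ in $L^1$ force $|\Omega\setminus E_k|\to0$. Hence $\int_\Omega\FF\cdot\nabla\phi\,\dr x=\lim_{k\to\infty}\int_{E_k}\FF\cdot\nabla\phi\,\dr x$, and it suffices to bound the right-hand integrals uniformly in $k$.

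The main step is to control each $\int_{E_k}\FF\cdot\nabla\phi\,\dr x$ by the perimeter of $E_k$. Since each $E_k$ is a smooth set compactly contained in $\Omega$, the definition \eqref{normal trace def} gives $\int_{E_k}\FF\cdot\nabla\phi\,\dr x=\ban{\FF\cdot\nu,\phi}_{\partial E_k}-\int_{E_k}\phi\,\dr\div\FF$. For a smooth set one has $E_k^1=E_k$, so Theorem \ref{toy'} and \eqref{G-G phi Sobolev int} represent the normal trace by the interior trace $\mathfrak{F}_{\rm i}\cdot\nu_{E_k}\in L^\infty(\partial E_k;\mathscr{H}^{n-1})$ with $\|\mathfrak{F}_{\rm i}\cdot\nu_{E_k}\|_{L^\infty}\le\|\FF\|_{L^\infty}$. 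Therefore $|\ban{\FF\cdot\nu,\phi}_{\partial E_k}|\le\|\FF\|_{L^\infty}P(E_k)$ and $|\int_{E_k}\phi\,\dr\div\FF|\le|\div\FF|(\Omega)$, which combine to give $|\int_{E_k}\FF\cdot\nabla\phi\,\dr x|\le\|\FF\|_{L^\infty}\sup_j P(E_j)+|\div\FF|(\Omega)=:M<\infty$, a bound independent of $k$ and of $\phi$. Passing to the limit yields $|\int_\Omega\FF\cdot\nabla\phi\,\dr x|\le M$ for every admissible $\phi$, which is precisely \eqref{allthetime} on $\mathbb{R}^n$, giving $|\div\tilde{\FF}|(\mathbb{R}^n)\le M<\infty$.

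The part requiring the most care is the $L^\infty$ bound on the normal trace along $\partial E_k$, that is, that the interior normal trace of a bounded divergence-measure field over a set of finite perimeter compactly contained in $\Omega$ is an essentially bounded function with $L^\infty$ norm controlled by $\|\FF\|_{L^\infty}$; this is exactly the content imported from Theorem \ref{toy'} and \cite{ctz,ChenTorres,Silhavy1}, and the role of \eqref{neat-2} is only to guarantee, via Theorem \ref{brandnew}, that the approximating perimeters $P(E_k)$ remain uniformly bounded so that the constant $M$ is finite. One should also confirm that the boundary charge $|\div\FF|(\partial E_k)$, which is in general nonzero because $|\div\FF|\ll\mathscr{H}^{n-1}$ by \eqref{cui1}, creates no mismatch between $\ban{\FF\cdot\nu,\cdot}_{\partial E_k}$ and the representation \eqref{G-G phi Sobolev int}; this is harmless precisely because $E_k^1=E_k$ for smooth $E_k$, so that the trace taken over $E_k$ and over $E_k^1$ coincide.
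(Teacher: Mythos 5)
Your argument is correct, and its skeleton coincides with the paper's: both proofs hinge on Theorem \ref{brandnew} producing $E_k \Subset \Omega$ with $E_k \to \Omega$ in $L^1$ and $\sup_k P(E_k) \lesssim_n \mathscr{H}^{n-1}(\partial\Omega\setminus\Omega^0)$, both reduce $\big|\int_\Omega \FF\cdot\nabla\phi\,\dr x\big|$ to a Gauss--Green identity on each $E_k$, and both land on the same bound $|\div\tilde{\FF}|(\mathbb{R}^n) \lesssim_n |\div\FF|(\Omega) + \|\FF\|_{L^\infty(\Omega)}\mathscr{H}^{n-1}(\partial\Omega\setminus\Omega^0)$. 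The genuine difference is in how the identity on $E_k$ is justified. You import it wholesale from the normal-trace theory on sets of finite perimeter (Theorem \ref{toy'} and \eqref{G-G phi Sobolev int}, i.e.\ \cite[Theorem 5.3]{ctz}), using that $E_k^1=E_k$ for smooth open $E_k$ and that $\|\mathfrak{F}_{\rm i}\cdot\nu_{E_k}\|_{L^\infty}\le \|\FF\|_{L^\infty}$. The paper instead re-derives the identity by hand: it mollifies $\tilde{\FF}$, applies the classical divergence theorem to $\tilde{\FF}_{\epsilon_j}$ on $E_k$, and passes to the limit in $j$; this requires choosing the radii of the covering balls so that $\tilde{\FF}_{\epsilon_j}\to\tilde{\FF}$ in $L^1(\mathscr{H}^{n-1}\res\partial E_k)$ and $|\div\tilde{\FF}|(\partial E_k)=0$, which is possible precisely because $E_k=\Omega\setminus\bigcup_i B_{r_i^k}(x_i^k)$ has boundary contained in finitely many spheres. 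Your route is shorter and cleaner but leans on the cited trace theorem; the paper's is more self-contained and, as a byproduct, exhibits the boundary term with the genuine pointwise values of $\FF$ on the spheres. Two small points to tidy in your version: \eqref{G-G phi Sobolev int} is stated for $\phi\in\Lip_c(\Omega)$, so for $\phi\in C_c^1(\mathbb{R}^n)$ you should first multiply by a cutoff equal to $1$ near $\overline{E_k}$ and supported in $\Omega$ (this changes neither $\ban{\FF\cdot\nu,\phi}_{\partial E_k}$ nor the constraint $|\phi|\le 1$); and the bound $\|\mathfrak{F}_{\rm i}\cdot\nu_{E_k}\|_{L^\infty}\le\|\FF\|_{L^\infty}$ should be attributed to \cite[Theorem 5.3]{ctz} (the crude estimate via Theorems \ref{toy'} and \ref{productruleinfty} gives $2\|\FF\|_{L^\infty}$, which would also suffice here).
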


\begin{proof}
Let $\tilde{\FF}_{\epsilon}$ be the standard mollification of $\tilde{\FF}$.
Using the area formula, we see that, for any $x \in \mathbb{R}^n$,
$$
\lim_{\epsilon \rightarrow 0} \int_0^1 \int_{\partial B_r(x)}|\tilde{\FF}_{\epsilon}(y)-\tilde{\FF}(y)|\, \dr\mathscr{H}^{n-1}(y)\dr r
= \lim_{\epsilon \rightarrow 0}\int_{B_1(x)} |\tilde{\FF}_{\epsilon}(y)-\tilde{\FF}(y)|\, \dr y=0.
$$
From Fatou's lemma, there is a subsequence $\epsilon_j \rightarrow 0$ such that
\begin{align}\label{fol1}
\lim_{j\rightarrow \infty}\int_{\partial B_r(x)} |\tilde{\FF}_{\epsilon_j}(y)-\tilde{\FF}(y)|\, \dr\mathscr{H}^{n-1}(y)=0
\qquad\,\,\mbox{for {\it a.e.} $r>0$}.
\end{align}
Also, since $\div \tilde{\FF}=\div \FF$ is a finite measure on $\Omega$, we have
\begin{align}\label{fol2}
|\div \tilde{\FF}|(\partial B_r(x) \cap \Omega)=0 \qquad\,\,\mbox{for {\it a.e.} $r>0$.}
\end{align}
From Theorem \ref{brandnew}, there exist sets $E_k \Subset \Omega$ such that $E_k \rightarrow \Omega$ in $L^1$ and
$$
P(E_k) \lesssim_n \mathscr{H}^{n-1}(\partial \Omega \setminus \Omega^0).
$$
We note from the proof of Theorem \ref{brandnew} that each $E_k$ can be chosen with form $\Omega \setminus \cup_{i=1}^{N_k} B_{r_i^k}(x_i^k)$,
where $x_i^k \in \partial \Omega$.
As explained in the proof of Theorem \ref{brandnew}, $B_r(x) \in \{ B_{r_i^k}(x_i^k)\}_{1\le i \le N_k}$ can be
chosen so that \eqref{600} and \eqref{606}--\eqref{605} hold.
We can also choose $r$ such that \eqref{fol1}--\eqref{fol2} hold.
Since $\partial E_k \subset \cup_{i=1}^{N_k} \big(\partial B_{r_i^k}(x_i^k) \cap \Omega\big)$, we can choose $E_k \Subset \Omega$ such that
\begin{align}
&\lim_{j\rightarrow \infty}\int_{\partial E_k} |\tilde{\FF}_{\epsilon_j}(y)-\tilde{\FF}(y)|\dr\mathscr{H}^{n-1}(y)=0,\label{621}\\[1mm]
&|\div \tilde{\FF}|(\partial E_k)=0,\label{622}\\[1mm]
&E_k \rightarrow \Omega \, \,\,\, \mbox{in $L^1$},\\[1mm]
&P(E_k) \lesssim_n \mathscr{H}^{n-1}(\partial \Omega \setminus \Omega^0).
\end{align}
Applying the divergence theorem for smooth vector fields on sets of finite perimeter,
we know that, for any $\phi \in C_c^1(\mathbb{R}^n)$ with $|\phi| \le 1$,
\begin{align}\label{633}
\int_{E_k} \tilde{\FF}_{\epsilon_j} \cdot \nabla \phi \,\dr y
=- \int_{E_k} \phi \, \dr \div \tilde{\FF}_{\epsilon_j}-\int_{\partial^* E_k} \phi \tilde{\FF}_{\epsilon_j} \cdot \nu_{E_k}\, \dr \mathscr{H}^{n-1}.
\end{align}
Since $\tilde{\FF}_{\epsilon_j} \rightarrow \tilde{\FF}$ in $L^1$ as $j \rightarrow \infty$,
we have
$$
\div \tilde{\FF}_{\epsilon_j} \stackrel{*}{\rightharpoonup}  \div \tilde{\FF}.
$$
By \eqref{622}, for any $ \phi \in C_c^1(\mathbb{R}^n)$ and $|\phi| \le 1$, it follows that
\begin{align*}
    \phi \,  \div \tilde{\FF}_{\epsilon_j} \stackrel{*}{\rightharpoonup} \phi \,  \div \tilde{\FF},
    \qquad |\phi \,  \div \tilde{\FF}|(\partial E_k)=0,
\end{align*}
so that
\begin{align}\label{634}
\lim_{j \rightarrow \infty} \int_{E_k} \phi \, \dr \div \tilde{\FF}_{\epsilon_j}
=\int_{E_k} \phi \, \dr \div \tilde{\FF}.
\end{align}
Using \eqref{621}, \eqref{634}, and the fact that $\mathscr{H}^{n-1}(\partial E_k \setminus \partial^* E_k)=0$ and  $\div \tilde{\FF}=\div \FF$ on $\Omega$,
and letting $j \rightarrow \infty$ in \eqref{633} yield
\begin{align}\label{jingqi2}
\int_{E_k} \FF \cdot \nabla \phi\, \dr y
= -\int_{E_k} \phi \, \dr \div \FF-\int_{\partial^* E_k} \phi \FF \cdot \nu_{E_k}\dr \mathscr{H}^{n-1}.
\end{align}
Letting $k \rightarrow \infty$ in \eqref{jingqi2}, we have
\begin{align*}
\Big|\int_{\Omega} \FF \cdot \nabla \phi\, \dr y\Big|
=&\,\Big|\lim_{k \rightarrow \infty} \int_{E_k} \FF \cdot \nabla \phi \,\dr y\Big|\\
\le &\, \limsup_{k \rightarrow \infty} \Big(\Big |\int_{E_k} \phi \, \dr \div \FF \Big|+\Big|\int_{\partial^* E_k} \phi \FF \cdot \nu_{E_k} \dr\mathscr{H}^{n-1}\Big| \Big)\\
\le &\, |\div \FF|(\Omega)+ \norm{\FF}_{L^\infty(\Omega)} \sup_k P(E_k)\\
\lesssim_n &\, |\div \FF|(\Omega)+\norm{\FF}_{L^{\infty}(\Omega)}\mathscr{H}^{n-1}(\partial \Omega \setminus \Omega^0).
\end{align*}
Then
\begin{align*}
|\div \tilde{\FF}|(\mathbb{R}^n)
 =& \,\sup \Big\{-\int_{\mathbb{R}^n}  \FF \cdot \nabla\phi\,\dr y\,:\, \phi \in C_c^1(\mathbb{R}^n), |\phi|\le1 \Big\} \\
\lesssim_n &\, |\div \FF|(\Omega)+\norm{\FF}_{L^\infty(\Omega)}\, \mathscr{H}^{n-1}(\partial \Omega \setminus \Omega^0)<\infty.
\end{align*}
This completes the proof.
\end{proof}

\subsection{Weak convergence of the trace operator}
Let $\FF_j, \FF \in \mathcal{DM}^{p}(\Omega)$ for  $1 \le p \le \infty$
and $j=1,2,\cdots$,
and let $\Omega$ be a bounded open set.
From the definition of normal traces (see Definition \ref{trac}), we have
\begin{equation}\label{entodo}
|\ban{\FF \cdot \nu, \phi}_{\partial \Omega}|
\le \norm{\phi}_{L^{\infty}(\Omega)}|\div \FF|(\Omega)+\norm{\FF}_{L^p(\Omega)}\norm{\nabla \phi}_{L^{p'}(\Omega)}
\qquad \text{ for any $\phi \in \Lip_{c}(\R^{n})$},
\end{equation}
where $p'$ is the conjugate to $p$, {\it i.e.} $\frac{1}{p'}+\frac{1}{p}=1$.

Since \eqref{entodo} holds especially for any $\phi \in \Lip_c(\Omega)$,
the normal trace $\ban{\FF \cdot \nu,\, \cdot}$ is also a distribution in $\Omega$.
From Definition \ref{trac}, it follows that
$$
\ban{\FF \cdot \nu, \phi}_{\partial \Omega}= \div (\phi \FF)(\Omega)
$$
so that
the trace can be extended
to a functional in the dual of the space:
\begin{align*}
    X:= \{\phi \in C^0(\Omega) \cap L^{\infty}(\Omega)\,:\, \nabla \phi \in L^{p'}(\Omega)\},
\end{align*}
with norm $\|\phi\|_X=\|\phi\|_{L^\infty(\Omega)}+\|\nabla\phi\|_{L^{p'}(\Omega)}$.
We now introduce the following definition:

\begin{definition}\label{coushu2}
We say that $\ban{\FF_j \cdot \nu,\, \cdot}_{\partial \Omega}$ converges in the weak* topology to
$\ban{\FF \cdot \nu,\, \cdot}_{\partial \Omega}$,
i.e. $\ban{\FF_j \cdot \nu,\, \cdot}_{\partial \Omega} \stackrel{*}{\rightharpoonup} \ban{\FF \cdot \nu,\, \cdot}_{\partial \Omega}$,
provided that, for any $\phi \in X$,
\begin{equation}
\ban{\FF_j \cdot \nu, \phi}_{\partial \Omega} \to \ban{\FF \cdot \nu, \phi}_{\partial \Omega}\qquad\,\,\text{as $j\to\infty$}.
\end{equation}
\end{definition}

In this subsection, we prove that, if $\FF_j$ and $\FF$ are as in \eqref{coushu-a}--\eqref{coushu-b},
then the normal trace sequence of $\FF_j$ converges in the weak-star topology
to the normal trace of $\FF$.
This result will be used to prove the Gauss-Green formula up to the boundary; see Theorem \ref{da} in \S 5 below.

\begin{lemma}\label{i1}
Let $\FF_j, \FF\in \mathcal{DM}^{p}(\Omega)$, where $\Omega$ is a bounded open set, $1\le p \le \infty$,
and $j=1,2, \cdots$.
If
\begin{equation}\label{4.14a}
 |\div \FF_j|(\Omega) \rightarrow |\div \FF|(\Omega),
\quad \norm{\FF_j-\FF}_{L^1(\Omega)} \rightarrow 0 \qquad\, \mbox{as $j\to \infty$}
\end{equation}
with
$\sup_{j}\|\FF_j\|_{L^p(\Omega)}<\infty$,
then there exists a subsequence $\FF_{j_k}$ such that, for any $\phi \in X$,
\begin{align}
    \int_{\Omega} \phi \, \dr \div \FF_{j_k}
    \rightarrow  \int_{\Omega} \phi \, \dr \div \FF \qquad\,\, \mbox{as $k \rightarrow \infty$.}
\end{align}
\end{lemma}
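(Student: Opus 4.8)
The plan is to work with the finite Radon measures $\mu_j:=\div\FF_j$ and $\mu:=\div\FF$ on $\Omega$ and to upgrade the elementary vague convergence of $\mu_j$ to $\mu$ (testing against compactly supported functions) to convergence against every bounded continuous test function, which in particular covers all $\phi\in X$. First I would record the vague convergence: for $\phi\in C^1_c(\Omega)$ the definition of the divergence measure gives $\int_\Omega\phi\,\dr\mu_j=-\int_\Omega\FF_j\cdot\nabla\phi\,\dr x$, and since $\FF_j\to\FF$ in $L^1(\Omega)$ while $\nabla\phi$ is bounded, the right-hand side converges to $-\int_\Omega\FF\cdot\nabla\phi\,\dr x=\int_\Omega\phi\,\dr\mu$. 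Because $|\mu_j|(\Omega)\to|\mu|(\Omega)$ forces $\sup_j|\mu_j|(\Omega)<\infty$, a standard density argument extends this to $\int_\Omega\phi\,\dr\mu_j\to\int_\Omega\phi\,\dr\mu$ for every $\phi\in C^0_c(\Omega)$; that is, $\mu_j\weakstarto\mu$ vaguely.

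The main obstacle, and the only genuine point of the proof, is that a test function $\phi\in X$ need not vanish near $\partial\Omega$, so vague convergence alone does not control a possible concentration of $|\mu_j|$ at the boundary. The hypothesis $|\mu_j|(\Omega)\to|\mu|(\Omega)$ is exactly what rules this out, via the following tightness argument. By weak-$*$ compactness of bounded sequences of Radon measures I would pass to a subsequence (not relabeled) along which $|\mu_j|\weakstarto\lambda$ vaguely, for some finite nonnegative Radon measure $\lambda$ on $\Omega$. Passing to the limit in $\int_\Omega\varphi\cdot\dr\mu_j\le\int_\Omega\psi\,\dr|\mu_j|$ (valid whenever $|\varphi|\le\psi$ with $\varphi\in C^0_c(\Omega;\R^n)$ and $0\le\psi\in C^0_c(\Omega)$) and taking the supremum over such $\varphi$ yields $|\mu|\le\lambda$. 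On the other hand, lower semicontinuity of mass on the open set $\Omega$ gives $\lambda(\Omega)\le\liminf_j|\mu_j|(\Omega)=|\mu|(\Omega)$. Combining $\lambda\ge|\mu|$ with $\lambda(\Omega)\le|\mu|(\Omega)$ forces $\lambda=|\mu|$, so that $|\mu_j|\weakstarto|\mu|$ with $|\mu_j|(\Omega)\to|\mu|(\Omega)$, i.e. no mass escapes. Concretely, given $\varepsilon>0$ and $\psi\in C^0_c(\Omega)$ with $0\le\psi\le1$ and $\int_\Omega\psi\,\dr|\mu|\ge|\mu|(\Omega)-\varepsilon$, the bound $|\mu_j|(\Omega\setminus\supp\psi)\le|\mu_j|(\Omega)-\int_\Omega\psi\,\dr|\mu_j|$ gives, upon letting $j\to\infty$, a compact $K=\supp\psi\subset\Omega$ with $\limsup_j|\mu_j|(\Omega\setminus K)\le\varepsilon$.

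With tightness in hand the conclusion is routine. Fix $\phi\in X$ and set $M:=\|\phi\|_{L^\infty(\Omega)}$. Given $\varepsilon>0$, take the compact $K\subset\Omega$ above, enlarged if necessary so that $|\mu|(\Omega\setminus K)\le\varepsilon$ as well, and pick $\chi\in C^0_c(\Omega)$ with $0\le\chi\le1$ and $\chi\equiv1$ on $K$. Then $\chi\phi\in C^0_c(\Omega)$, so vague convergence yields $\int_\Omega\chi\phi\,\dr\mu_j\to\int_\Omega\chi\phi\,\dr\mu$, while the remainders are controlled by $\bigl|\int_\Omega(1-\chi)\phi\,\dr\mu_j\bigr|\le M\,|\mu_j|(\Omega\setminus K)$ and $\bigl|\int_\Omega(1-\chi)\phi\,\dr\mu\bigr|\le M\,|\mu|(\Omega\setminus K)$, since $(1-\chi)\phi$ is supported in $\Omega\setminus K$. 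Hence $\limsup_j\bigl|\int_\Omega\phi\,\dr\mu_j-\int_\Omega\phi\,\dr\mu\bigr|\le 2M\varepsilon$, and letting $\varepsilon\to0$ gives $\int_\Omega\phi\,\dr\div\FF_{j_k}\to\int_\Omega\phi\,\dr\div\FF$ along the extracted subsequence, which is the assertion. I would also remark that, since the vague limit $\lambda$ is forced to equal $|\mu|$ independently of the chosen subsequence, the convergence in fact holds for the full sequence.
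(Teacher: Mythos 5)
Your proof is correct, and it takes a genuinely different route from the paper's. The paper exhausts $\Omega$ by the superlevel sets $U^{\ve}=\{x\in\Omega: \dist(x,\partial\Omega)>\ve\}$ of the distance function, uses the coarea formula to select good levels $\ve_k$ on which $\FF_j\to\FF$ in $L^1(\partial U^{\ve_k};\Haus{n-1})$, applies the interior Gauss--Green formulas of \cite{CCT} on each $U^{\ve_k}$ to rewrite $\int_{U^{\ve_k}}\phi\,\dr\div\FF_j$ as a bulk gradient term plus a boundary term, and assembles a diagonal subsequence; the hypothesis $|\div\FF_j|(\Omega)\to|\div\FF|(\Omega)$ enters there to control $|\div\FF_{j_k}|(\Omega\setminus U^{\ve_k})$, and the bound $\sup_j\|\FF_j\|_{L^p}<\infty$ is used to kill the term $\int_{U^{\ve_k}}(\FF-\FF_{j_k})\cdot\nabla\phi\,\dr x$ via weak $L^p$ convergence. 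You instead run a purely measure-theoretic argument: vague convergence $\div\FF_j\weakstarto\div\FF$ follows from $\|\FF_j-\FF\|_{L^1}\to 0$ alone, and the classical ``convergence of total variations prevents loss of mass to the boundary'' mechanism (identifying any vague limit $\lambda$ of $|\div\FF_j|$ with $|\div\FF|$ by squeezing $|\div\FF|\le\lambda$ against $\lambda(\Omega)\le|\div\FF|(\Omega)$) upgrades this to convergence against every bounded continuous test function. Each step you use --- the variational characterization $\sup_{|\varphi|\le\psi}\int\varphi\,\dr\mu=\int\psi\,\dr|\mu|$, lower semicontinuity of mass on open sets under vague convergence, and the cutoff estimate --- is standard and correctly applied (one cosmetic slip: $\varphi$ should be scalar-valued, not in $C^0_c(\Omega;\R^n)$, since $\div\FF_j$ is a scalar measure). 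What your approach buys is a shorter, self-contained proof that needs neither the Gauss--Green formula on interior sets nor the coarea selection, does not use the hypothesis $\sup_j\|\FF_j\|_{L^p(\Omega)}<\infty$ at all, and yields convergence of the full sequence rather than a subsequence. What the paper's approach buys is the auxiliary structure (good interior sets $U^{\ve_k}$ with controlled boundary behaviour and weak $L^p$ convergence of $\FF_{j_k}$) that is reused immediately afterwards in the proofs of Theorem \ref{contin} and Theorem \ref{da}, where the gradient term $\int_\Omega\FF_{j}\cdot\nabla\phi\,\dr x$ with $\nabla\phi\in L^{p'}$ must also be passed to the limit.
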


\begin{proof}
Let $d(x):=\text{dist}(x, \partial \Omega)$ be the standard distance function.
Define
$$
U^{\ve}:=\{x \in \Omega\,: \, d(x)> \ve \}.
$$
For {\it a.e.} $\ve>0$, $U^{\ve}$ is a set of finite perimeter.
Clearly, $U^{\ve} \Subset \Omega$, $U^{\ve} \rightarrow \Omega$ in $L^1$,
and there exists a subsequence (still denoted as) $\{\FF_j\}$  such that, for {\it a.e.} $\ve>0$,
\begin{align}\label{d0}
   \lim_{j \rightarrow \infty} \int_{\partial U^{\ve}} |\FF_j-\FF|\, \dr\mathscr{H}^{n-1}=0.
\end{align}

Indeed, \eqref{d0} can be derived from the coarea formula (see \cite [Theorem 2.1]{CCT} and
the fact that $|\nabla d (x)|=1$ for {\it a.e.} $x \in \Omega$ (see \cite[Lemma 5.1]{CCT}).
Since  $\div \FF$ is a finite measure, we also have
\begin{equation}
\label{d1}
    |\div \FF_j |(\partial U^{\ve})= |\div \FF|(\partial U^{\ve})=0
    \qquad  \text{for {\it a.e.} } \ve>0, \,\, j=1,2,\cdots.
\end{equation}

After possibly discarding a set of $\mathcal{L}^1$-measure zero, the following Gauss-Green formulas hold
for {\it a.e.}\, $\ve>0$ (see \cite[Lemma 5.3 and Remark 5.10]{CCT} for a detailed proof):
\begin{align}
\label{d2}
    \int_{U^{\ve}} \phi \, \dr \div \FF= -\int_{U^{\ve}} \FF \cdot \nabla \phi\,\dr x
     - \int_{\partial U^{\ve}} \phi \FF \cdot \nu_{E_k} \dr\mathscr{H}^{n-1},
\end{align}
and
\begin{align}
\label{d3}
    \int_{U^{\ve}} \phi \, \dr \div \FF_j
    = -\int_{U^{\ve}} \FF_j \cdot \nabla \phi\,\dr x
    - \int_{\partial U^{\ve}} \phi \FF_j \cdot \nu_{E_k} \dr\mathscr{H}^{n-1}.
\end{align}

Let $\ve_k \to 0$ be a sequence so that \eqref{d0}--\eqref{d3} hold when $\ve=\ve_k, k=1,2, \cdots$.
Since $\div \FF$ is a measure, then
\begin{align}
    \label{d4}
    |\div \FF|(\Omega \setminus U^{\ve_k})< \frac{1}{k} \qquad\,\,\, \mbox{for $k$ large enough}.
\end{align}
Since $\lim_{j\rightarrow \infty} |\div \FF_j|(\Omega)=|\div \FF|(\Omega)$, it follows from \eqref{d1} that,
for each $k$,
\begin{align}
\label{d5}
    \lim_{j\rightarrow \infty} |\div \FF_j|(\Omega \setminus U^{\ve_k})=|\div \FF|(\Omega\setminus U^{\ve_k}).
\end{align}
Therefore, by \eqref{d0} and \eqref{d5}, we can choose $j_k$ such that
\begin{align}
&|\div \FF_{j_k}|(\Omega \setminus U^{\ve_k})< |\div \FF|(\Omega \setminus U^{\ve_k})+\frac{1}{k} < \frac{2}{k}, \label{d6}\\
&\int_{\partial U^{\ve_k}}|\FF_{j_k}-\FF|\,\dr \mathscr{H}^{n-1} < \frac{1}{k}, \label{d7}\\
&\FF_{j_k} \rightharpoonup \FF \,\,\,\mbox{in $L^p$ for $p\in (1,\infty)$}
\,\,\,\,\mbox{and}\,\,\,\, \FF_{j_k} \stackrel{*}{\rightharpoonup} \FF \,\,\,\mbox{in $L^\infty$} \qquad\,\, \mbox{as $k\to \infty$}.\label{4.22a}
\end{align}

Finally, from \eqref{d2}--\eqref{d3} and \eqref{d6}--\eqref{d7}, we have
\begin{align*}
    &\Big|\int_{\Omega} \phi \,\dr  \div \FF_{j_k} - \int_{\Omega} \phi \, \dr \div \FF\Big| \\
    &\le  \norm{\phi}_{L^{\infty}(\Omega)} \big( |\div \FF_{j_k}|(\Omega \setminus U^{\ve_k})+|\div \FF|(\Omega \setminus U^{\ve_k})\big)
     +\Big|\int_{U^{\ve_k}} \phi \,\dr \div \FF_{j_k} - \int_{U^{\ve_k}} \phi \, \dr \div \FF\Big|\\
    &\le  \frac{3}{k}\norm{\phi}_{L^{\infty}(\Omega)}+\Big|\int_{U^{\ve_k}} (\FF-\FF_{j_k}) \cdot \nabla \phi\, \dr x
     - \int_{\partial U^{\ve_k}} \phi (\FF_{j_k}-\FF)\cdot \nu_{E_k} \, \dr \mathscr{H}^{n-1}\Big|\\
    &\le  \frac{4}{k}\norm{\phi}_{L^{\infty}(\Omega)}+\Big|\int_{U^{\ve_k}} (\FF-\FF_{j_k}) \cdot \nabla \phi\, \dr x\Big|\\
    &\rightarrow  0 \qquad \mbox{as $k \rightarrow \infty$,}
\end{align*}
where we have used \eqref{4.14a} and \eqref{4.22a} in the last limit.
\end{proof}

From Lemma \ref{i1}, we have the following result:

\begin{theorem}\label{contin}
Let $\FF_j, \FF\in \mathcal{DM}^{p}(\Omega)$, where $\Omega$ is a bounded open set, $1\le p \le \infty$,
and $j=1,2,\cdots$.
If
\begin{equation*}
 |\div \FF_j|(\Omega) \rightarrow |\div \FF|(\Omega),
\quad \norm{\FF_j-\FF}_{L^1(\Omega)} \rightarrow 0 \qquad\, \mbox{as $j\to \infty$}
\end{equation*}
with
$\sup_{j}\|\FF_j\|_{L^p(\Omega)}<\infty$,
then
\begin{align*}
    \ban{\FF_j \cdot \nu,\, \cdot}_{\partial \Omega} \stackrel{*}{\rightharpoonup} \ban{\FF \cdot \nu,\, \cdot}_{\partial \Omega}
    \qquad \mbox{as $j\to \infty$}.
\end{align*}
In particular, this result holds for $\FF$ and $\FF_j$ given as in
Proposition {\rm \ref{approximation}}.
\end{theorem}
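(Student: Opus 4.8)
The plan is to verify Definition \ref{coushu2} directly: fix $\phi \in X$ and show that both terms in the normal-trace pairing
\[
\ban{\FF_j \cdot \nu, \phi}_{\partial \Omega} = \int_{\Omega} \phi \, \dr \div \FF_j + \int_{\Omega} \FF_j \cdot \nabla \phi \, \dr x
\]
converge to the corresponding terms for $\FF$. The convergence of the first (measure) term is precisely the content of Lemma \ref{i1}, but only along a subsequence, while the second (volume) term will follow from the weak (or weak-$*$) convergence $\FF_j \weakto \FF$. The whole argument is thus a short deduction from Lemma \ref{i1}; the one genuine point is to promote its subsequential conclusion to convergence of the full sequence.

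For the second term, I would first record that the hypotheses $\norm{\FF_j - \FF}_{L^1(\Omega)} \to 0$ together with $\sup_j \norm{\FF_j}_{L^p(\Omega)} < \infty$ force $\FF_j \weakto \FF$ in $L^p(\Omega)$ for $1 < p < \infty$ and $\FF_j \weakstarto \FF$ in $L^\infty(\Omega)$ for $p = \infty$, for the \emph{entire} sequence. Indeed, any subsequence is bounded in the reflexive space $L^p(\Omega)$ (resp.\ in $L^\infty(\Omega) = (L^1(\Omega))^*$, whose predual is separable), hence has a further subsequence converging weakly (resp.\ weak-$*$) to some limit, which must equal $\FF$ since the $L^1$-convergence already identifies the distributional limit; the subsequence principle then yields convergence of the full sequence. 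Pairing against $\nabla \phi \in L^{p'}(\Omega)$ gives $\int_\Omega \FF_j \cdot \nabla \phi \, \dr x \to \int_\Omega \FF \cdot \nabla \phi \, \dr x$. When $p = 1$ this is even more direct, since $\nabla \phi \in L^\infty(\Omega)$ and the estimate $\left| \int_\Omega (\FF_j - \FF) \cdot \nabla \phi \, \dr x \right| \le \norm{\FF_j - \FF}_{L^1(\Omega)} \norm{\nabla \phi}_{L^\infty(\Omega)} \to 0$ applies.

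For the first term, Lemma \ref{i1} only produces a subsequence $\FF_{j_k}$ with $\int_\Omega \phi \, \dr \div \FF_{j_k} \to \int_\Omega \phi \, \dr \div \FF$ for all $\phi \in X$, so I would again invoke the subsequence principle to reach the full sequence. The key observation is that every subsequence of $\{\FF_j\}$ still satisfies the hypotheses \eqref{4.14a} and $\sup_j \norm{\FF_j}_{L^p(\Omega)} < \infty$; applying Lemma \ref{i1} to it produces a further subsequence along which $\int_\Omega \phi \, \dr \div(\cdot)$ converges to the \emph{same} limit $\int_\Omega \phi \, \dr \div \FF$, which is independent of the chosen subsequence. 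Hence the full sequence converges. Combining the two terms yields $\ban{\FF_j \cdot \nu, \phi}_{\partial \Omega} \to \ban{\FF \cdot \nu, \phi}_{\partial \Omega}$ for every $\phi \in X$, which is exactly the asserted weak-$*$ convergence of the traces.

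Finally, for the last assertion I would simply check that the fields from Proposition \ref{approximation} meet the hypotheses: \eqref{coushu-a} is precisely \eqref{4.14a}, and \eqref{coushu-b} is the uniform $L^p$ bound, so the theorem applies verbatim to that sequence. The main (and essentially only) obstacle is the passage from the subsequential statement of Lemma \ref{i1} to the full-sequence statement; this is handled cleanly by the subsequence principle, once one notes that the candidate limit is the same for every subsequence. A secondary point worth stressing is that the test functions $\phi \in X$ need not vanish on $\partial \Omega$, so the mass condition $|\div \FF_j|(\Omega) \to |\div \FF|(\Omega)$ (ruling out concentration of divergence mass near the boundary) is indispensable; this has, however, already been absorbed into Lemma \ref{i1}.
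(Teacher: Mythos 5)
Your proposal is correct and follows essentially the same route as the paper: the paper likewise deduces the theorem from Lemma \ref{i1} via the subsequence principle (every subsequence admits a further subsequence along which the trace pairing converges to the same limit, hence the whole sequence converges), with the volume term handled by the weak (or weak-$*$) convergence of $\FF_j$ in $L^p$ coming from the $L^1$-convergence and the uniform $L^p$ bound. Your treatment is only marginally more explicit in promoting the weak convergence of the volume term to the full sequence, but this is the same argument.
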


\begin{proof}
Given $\phi \in  X$, by Lemma \ref{i1}, we find that, for any subsequence of $\{\FF_j\}$,
there exists another subsequence $\{\FF_{j_k}\}$ such that
\begin{align*}
    \lim_{k \rightarrow \infty} \int_{\Omega} \phi \, \dr \div \FF_{j_k}=\int_{\Omega} \phi \, \dr \div \FF,
\end{align*}
so that
\begin{align*}
    \lim_{k \rightarrow \infty} \Big(\int_{\Omega} \phi \, \dr \div \FF_{j_k}
    + \int_{\Omega} \FF_{j_k} \cdot \nabla\phi\,\dr x \Big)=\int_{\Omega} \phi \, \dr \div \FF+ \int_{\Omega} \FF \cdot \nabla\phi\, \dr x.
\end{align*}
That is, as $k \rightarrow \infty$,
\begin{align*}
    \ban{\FF_{j_k} \cdot \nu, \phi}_{\partial \Omega} \to \ban{\FF \cdot \nu,\, \phi}_{\partial \Omega}.
\end{align*}
Since the limit is unique, we conclude
\begin{align*}
    \ban{\FF_{j} \cdot \nu,\, \cdot}_{\partial \Omega} \stackrel{*}{\rightharpoonup} \ban{\FF \cdot \nu,\, \cdot}_{\partial \Omega}.
\end{align*}
\end{proof}

\section{Gauss-Green Formula up to the Boundary on Extension Domains for Bounded Divergence-Measure Fields}
We begin this section with the following simple example, which says that,
if $\FF \in {\DM}^{\infty}(\Omega)$, and $\Omega$ is a bounded open set,
then $\ban{\FF \cdot \nu,\, \cdot}_{\partial \Omega}$ may not be concentrated on $\partial^* \Omega$.

\begin{example}\label{classical}
Let $\Omega=D \setminus S$, where $D=(-1,1) \times (-1,1)$ and $S=(-1,1) \times \{0\}$. We define
\begin{equation}
    \FF(x_1,x_2):= \begin{cases}
    (0,1)  \quad &\mbox{for $x_2>0$},\\
    (0,-1) \quad &\mbox{for $x_2<0$}.
    \end{cases}
\end{equation}
Let $\Omega^+=D \cap \{x_2>0\}$ and $\Omega^-=D \cap \{x_2<0\}$,
and let $S_1:=(-1,1) \times \{1\}$ and $S_2:=(-1,1) \times \{-1\}$.
Then, for any $\phi \in C_c^1(\mathbb{R}^2)$, we have
\begin{align*}
    \int_{\Omega} \FF \cdot \nabla \phi\, \dr x
    =& \int_{\Omega^+} \FF \cdot \nabla \phi\, \dr x +\int_{\Omega^-} \FF \cdot \nabla \phi\, \dr x\\
    =&\int_{\Omega^+} \partial_{x_2} \phi \, \dr x -\int_{\Omega^-} \partial_{x_2} \phi\, \dr x\\
    =& \int_S (-\phi)\, \dr\mathscr{H}^{n-1}
       -\int_S \phi\, \dr\mathscr{H}^{n-1} + \int_{S_1 \cup S_2} \phi\,\dr\mathscr{H}^{n-1}\\
    =& -2 \int_S \phi\, \dr\mathscr{H}^{n-1}+\int_{S_1 \cup S_2} \phi\, \dr\mathscr{H}^{n-1},
\end{align*}
where we have used the classical Gauss-Green formula.

Since $\div \FF=0$ on $\Omega$, the previous computation yields
\begin{align*}
    \ban{ \FF \cdot \nu,\,\phi}_{\partial \Omega}
    =\int_{\Omega} \phi \, \dr \div \FF
     +\int_{\Omega} \FF \cdot \nabla \phi\, \dr x
     = -2 \int_S \phi\, \dr\mathscr{H}^{n-1}
       +\int_{S_1 \cup S_2} \phi\, \dr\mathscr{H}^{n-1}.
\end{align*}
Therefore, $\ban{\FF \cdot \nu,\, \cdot}_{\partial \Omega}$ is
a measure $\mu:=-2 \mathscr{H}^1 \res S+\mathscr{H}^{1}\res({S_1 \cup S_2})$.
\end{example}

Motivated by Example \ref{classical}, in order to study the normal trace $\ban{\FF \cdot \nu,\, \cdot}_{\partial \Omega}$
for a bounded divergence-measure field $\FF$ and  an extension domain $\Omega$,
the measure-theoretic interior part of the topological boundary has to be considered.
This example has motivated us to study the characterization of domains satisfying \eqref{neat-2}
and to formulate and prove the following theorem.

\begin{theorem}\label{da}
Let $\Omega$ be a bounded open set of finite perimeter that is an extension domain for $\FF \in \mathcal{DM}^{\infty}(\Omega)$,
and let $\tilde{\FF}$ be as in \eqref{ext}.
Then the trace operator $\ban{\FF \cdot \nu,\, \cdot}_{\partial \Omega}$ is a finite Radon
measure $\mu$ concentrated on $\partial \Omega \setminus \Omega^0$ with
\begin{align}\label{l3}
   \mu=-\div \tilde{\FF} \res\left((\partial \Omega \cap \Omega^1) \cup \partial ^* \Omega\right)
     = -\div \tilde{\FF} \res(\partial \Omega \cap \Omega^1) -2 \overline{\tilde{\FF} \cdot D\chi_{\Omega}},
\end{align}
where $\overline{\tilde{\FF} \cdot D\chi_{\Omega}}$ is a measure concentrated
on $\partial^* \Omega$ defined as in Theorem {\rm \ref{productruleinfty}}.

Since  $\mu \ll \mathscr{H}^{n-1} \res{\left((\partial \Omega \cap \Omega^1) \cup \partial^* \Omega\right)}$,
there exists $g \in L^{1}\left(\partial \Omega \setminus \Omega^0 ; \mathscr{H}^{n-1} \right)$ such that
the following Gauss-Green formula {\it up to the boundary} holds{\rm :}
\begin{equation}
\label{nuevo}
\int_{\Omega} \phi\, \dr\div \FF + \int_{\Omega} \FF \cdot \nabla \phi\, \dr x
=\int_{\partial \Omega \setminus \Omega^0} \phi(x)g(x)\, \dr \mathscr{H}^{n-1}(x)
\end{equation}
for any $\phi \in C_c^1(\mathbb{R}^n)$.
In particular, if $\Omega$ is a bounded open set satisfying \eqref{neat-2},
then \eqref{nuevo} holds and, in this case, $g \in L^{\infty} \left( \partial  \Omega \setminus \Omega^0; \mathscr{H}^{n-1} \right)$.
\end{theorem}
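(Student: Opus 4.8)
The plan is to first rewrite the trace functional purely in terms of the global field $\tilde{\FF}\in\DM^\infty(\mathbb{R}^n)$, whose divergence is a finite Radon measure on $\mathbb{R}^n$ precisely because $\Omega$ is an extension domain. For $\phi\in C_c^1(\mathbb{R}^n)$, the definition of $\div\tilde{\FF}$ gives $\int_\Omega\FF\cdot\nabla\phi\,\dr x=\int_{\mathbb{R}^n}\tilde{\FF}\cdot\nabla\phi\,\dr x=-\int_{\mathbb{R}^n}\phi\,\dr\div\tilde{\FF}$, while testing against $\phi\in C_c^1(\Omega)$ shows $\div\tilde{\FF}\res\Omega=\div\FF$. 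Substituting both identities into Definition \ref{trac}, I would obtain
$$\ban{\FF\cdot\nu,\phi}_{\partial\Omega}=\int_{\mathbb{R}^n}\phi\,\dr(\div\tilde{\FF}\res\Omega)-\int_{\mathbb{R}^n}\phi\,\dr\div\tilde{\FF}=-\int_{\mathbb{R}^n}\phi\,\dr(\div\tilde{\FF}\res(\mathbb{R}^n\setminus\Omega)).$$
Since $\tilde{\FF}\equiv0$ on the open set $\mathbb{R}^n\setminus\overline\Omega$, its divergence vanishes there, so $\div\tilde{\FF}\res(\mathbb{R}^n\setminus\Omega)=\div\tilde{\FF}\res\partial\Omega$. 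This already exhibits the trace as the finite Radon measure $\mu=-\div\tilde{\FF}\res\partial\Omega$.

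Next I would dissect $\mu$ by the product rule. Since $\Omega$ has finite perimeter, $\chi_\Omega\in BV(\mathbb{R}^n)\cap L^\infty(\mathbb{R}^n)$, and $\chi_\Omega\tilde{\FF}=\tilde{\FF}$. Applying Theorem \ref{productruleinfty} to $g=\chi_\Omega$ and the global field $\tilde{\FF}$ gives $\div\tilde{\FF}=\chi_\Omega^*\div\tilde{\FF}+\overline{\tilde{\FF}\cdot D\chi_\Omega}$, hence $(1-\chi_\Omega^*)\div\tilde{\FF}=\overline{\tilde{\FF}\cdot D\chi_\Omega}$. Because $|\div\tilde{\FF}|\ll\mathscr{H}^{n-1}$ by \eqref{cui1}, I may use the $\mathscr{H}^{n-1}$-a.e. identity $\chi_\Omega^*=\chi_{\Omega^1}+\tfrac12\chi_{\partial^*\Omega}$ together with the fact (from Theorem \ref{productruleinfty}) that $\overline{\tilde{\FF}\cdot D\chi_\Omega}$ is concentrated on $\partial^*\Omega$ and obeys $|\overline{\tilde{\FF}\cdot D\chi_\Omega}|\le\|\FF\|_{L^\infty(\Omega)}\,\mathscr{H}^{n-1}\res\partial^*\Omega$. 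As $\Omega^0\cap\partial^*\Omega=\emptyset$, comparing the supports of the two sides forces $\div\tilde{\FF}\res\Omega^0=0$ and $\div\tilde{\FF}\res\partial^*\Omega=2\overline{\tilde{\FF}\cdot D\chi_\Omega}$. Splitting $\partial\Omega$ into the pieces $\partial\Omega\cap\Omega^1$, $\partial\Omega\cap\Omega^0$, and $\partial^m\Omega$ (and discarding the $\mathscr{H}^{n-1}$-null set $\partial^m\Omega\setminus\partial^*\Omega$ via Theorem \ref{Federertheorem}) then yields \eqref{l3} and shows that $\mu$ is concentrated on $\partial\Omega\setminus\Omega^0$. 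The absolute continuity $\mu\ll\mathscr{H}^{n-1}\res((\partial\Omega\cap\Omega^1)\cup\partial^*\Omega)$ is immediate from $|\div\tilde{\FF}|\ll\mathscr{H}^{n-1}$, so the Radon--Nikodym theorem produces $g\in L^1(\partial\Omega\setminus\Omega^0;\mathscr{H}^{n-1})$ (finiteness of $\mu$ giving integrability) and formula \eqref{nuevo}.

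Finally I would prove the $L^\infty$ bound under \eqref{neat-2}. On $\partial^*\Omega$ the density of $\mu$ is already bounded by $2\|\FF\|_{L^\infty(\Omega)}$ through the product-rule estimate above, so the real issue is the crack part $\partial\Omega\cap\Omega^1$. I would control the density by establishing the local estimate $|\mu|(B_r(x))\lesssim_n\|\FF\|_{L^\infty(\Omega)}\,\mathscr{H}^{n-1}((\partial\Omega\setminus\Omega^0)\cap\overline{B_r(x)})$ for $\mathscr{H}^{n-1}$-a.e. $x$ and a.e. small $r$, and then differentiate with respect to $\mathscr{H}^{n-1}\res((\partial\Omega\cap\Omega^1)\cup\partial^*\Omega)$, using that this measure equals $\mathscr{H}^{n-1}\res(\partial\Omega\setminus\Omega^0)$ up to an $\mathscr{H}^{n-1}$-null set. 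To get the local estimate, I would run the interior approximation $E_k=\Omega\setminus\cup_i B_{r_i^k}(x_i^k)\Subset\Omega$ of Theorem \ref{brandnew} and pass to the limit in \eqref{jingqi2} to obtain $\ban{\FF\cdot\nu,\phi}_{\partial\Omega}=-\lim_k\int_{\partial^*E_k}\phi\,\FF\cdot\nu_{E_k}\,\dr\mathscr{H}^{n-1}$ for $\phi\in C_c^1(\mathbb{R}^n)$; choosing $0\le\phi\le1$ supported in $B_r(x)$ bounds $|\ban{\FF\cdot\nu,\phi}_{\partial\Omega}|$ by $\|\FF\|_{L^\infty(\Omega)}\limsup_k P(E_k;B_r(x))$. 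The crux is then a localized form of the perimeter bound $P(E_k)\lesssim_n\mathscr{H}^{n-1}(\partial\Omega\setminus\Omega^0)$: since the covering balls have radii $\delta_k\to0$, only those centered in $B_{r+\delta_k}(x)$ contribute to $P(E_k;B_r(x))$, and re-summing the estimates \eqref{key61} and \eqref{606} over this localized subfamily gives $\limsup_k P(E_k;B_r(x))\lesssim_n\mathscr{H}^{n-1}((\partial\Omega\setminus\Omega^0)\cap\overline{B_r(x)})$. This localization of the covering construction of Theorem \ref{brandnew} is the main obstacle; once it is in hand, taking the supremum over admissible $\phi$ gives the local estimate for $|\mu|$, and the Besicovitch differentiation theorem yields $|g|\lesssim_n\|\FF\|_{L^\infty(\Omega)}$, i.e. $g\in L^\infty(\partial\Omega\setminus\Omega^0;\mathscr{H}^{n-1})$.
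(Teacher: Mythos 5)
Your argument is correct, and for the first half it is organized differently from the paper's. You begin by observing that, for $\phi \in C_c^1(\R^n)$, the trace functional collapses to $-\int_{\R^n}\phi \, \dr\bigl(\div \tilde{\FF}\res\partial\Omega\bigr)$, so that $\mu=-\div\tilde{\FF}\res\partial\Omega$ is manifestly a finite Radon measure; you then obtain $\div\tilde{\FF}\res\Omega^0=0$ and $\div\tilde{\FF}\res\partial^*\Omega=2\,\overline{\tilde{\FF}\cdot D\chi_{\Omega}}$ by applying Theorem \ref{productruleinfty} to $\chi_\Omega\tilde{\FF}=\tilde{\FF}$ as a measure identity and comparing restrictions over the Federer partition $\Omega^1\cup\Omega^0\cup\partial^m\Omega$, using \eqref{cui1}. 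The paper reaches the same conclusions by mollifying $\chi_\Omega$ and passing to the limit in an integral identity (its (5.4)--(5.9)), which amounts to re-deriving the relevant consequences of the product rule by hand; your restriction/support-comparison argument is shorter and makes the concentration of $\mu$ on $\partial\Omega\setminus\Omega^0$ immediate, while the paper's derivation keeps the mollification explicit (which is closer in spirit to how $\overline{\tilde{\FF}\cdot D\chi_\Omega}$ is defined). For the $L^\infty$ bound under \eqref{neat-2}, your route coincides with the paper's: a localized perimeter estimate for the interior approximation of Theorem \ref{brandnew}, the Gauss--Green identity \eqref{jingqi2} on $E_k$, and Besicovitch differentiation of $\mu$ against $\mathscr{H}^{n-1}\res(\partial\Omega\setminus\Omega^0)$. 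The one point you rightly single out as the crux --- localizing the covering bounds \eqref{key61} and \eqref{606} to obtain $\limsup_k P(E_k;B_r(x))\lesssim_n \mathscr{H}^{n-1}\bigl((\partial\Omega\setminus\Omega^0)\cap \overline{B_r(x)}\bigr)$ --- is exactly the step the paper compresses into ``proceeding as in the derivation of \eqref{tiao'}'' to assert \eqref{tiao}; your sketch (only balls centered in $B_{r+\delta_k}(x)$ contribute, and the $\mathcal{F}_0$ covering can be chosen to be efficient on $\overline{B_{r+\delta_k}(x)}$ by the definition of Hausdorff measure) is the right way to fill it, and your use of the closed ball on the right-hand side is in fact slightly more careful than the paper's statement. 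The only other difference is cosmetic: the paper routes the limit $k\to\infty$ through the smooth approximations $\FF_j$ of Proposition \ref{approximation} and Theorem \ref{contin}, whereas you pass to the limit in \eqref{jingqi2} directly, which is legitimate since $\Omega\setminus E_k$ shrinks to a $|\div\FF|$-null set and $|\FF|\le\|\FF\|_{L^\infty}$ $\mathscr{H}^{n-1}$-a.e.\ on the chosen good spheres.
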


\begin{proof}
For any $\phi \in C_c^{1}(\mathbb{R}^n)$, using the classical product rule:
\begin{align}
\label{prod''}
    \div (\phi \tilde{\FF})=\phi\,\div \tilde{\FF}+ \nabla \phi \cdot \tilde{\FF},
\end{align}
which is a particular case of \eqref{PRDMF}, we have

\begin{align}\label{l4}
    \int_{\mathbb{R}^n} (\chi_{\Omega} * \rho_{\epsilon}) \phi\, \dr \div \tilde{\FF}+
     \int_{\mathbb{R}^n} (\chi_{\Omega} * \rho_{\epsilon}) \nabla \phi \cdot \tilde{\FF}\, \dr x
     =-\int_{\mathbb{R}^n} \phi \tilde{\FF} \cdot \nabla (\chi_{\Omega} * \rho_{\epsilon})\, \dr x.
\end{align}
Since, for $\mathscr{H}^{n-1}-a.e.\,\, x \in \mathbb{R}^n$,
\begin{equation}
    \lim_{\epsilon \rightarrow 0} (\chi_{\Omega} * \rho_{\epsilon})(x)= \begin{cases}
1 \quad &\mbox{for $x \in \Omega^1$},\\
\frac{1}{2} \quad &\mbox{for $x \in \partial^* \Omega$},\\
0 \quad &\mbox{for $x \in \Omega^0$},
\end{cases}
\end{equation}
and $\div \tilde{\FF} \ll \mathscr{H}^{n-1}$ as stated in \eqref{cui1},
letting $\epsilon \rightarrow 0$ in \eqref{l4} yields
\begin{align}\label{l9}
    \int_{\Omega} \phi \, \dr \div \tilde{\FF}
    +\int_{\Omega} \tilde{\FF} \cdot \nabla \phi\, \dr x
    +\int_{\Omega^1 \cap \partial \Omega}\phi \, \dr \div \tilde{\FF}
    +\frac{1}{2}\int_{\partial^* \Omega} \phi \, \dr \div \tilde{\FF}
    =-\int_{\mathbb{R}^n} \phi\, \dr \overline{\tilde{\FF} \cdot D\chi_{\Omega}},
\end{align}
so that
\begin{align}\label{l6}
    -\int_{\Omega} \tilde{\FF} \cdot \nabla \phi\, \dr x
    = \int_{\Omega} \phi \, \dr \div \tilde{\FF}
      +\int_{\Omega^1 \cap \partial \Omega}\phi \, \dr \div \tilde{\FF}
      +\frac{1}{2}\int_{\partial^* \Omega} \phi \, \dr \div \tilde{\FF}
      +\int_{\mathbb{R}^n} \phi\, \dr \overline{\tilde{\FF} \cdot D\chi_{\Omega}}.
\end{align}
It is well known that $\overline{\tilde{\FF} \cdot D\chi_{\Omega}}$ is a finite Radon measure
concentrated on $\partial ^* \Omega$ (see \cite[Page 251]{ChenTorres} for a proof).
Therefore, it follows from \eqref{l6} that $\div \tilde{\FF}$ is a measure concentrated
on $\Omega^1 \cup \partial^* \Omega$.
Since
\begin{align}\label{l7}
    \Omega^1 \cup \partial^* \Omega=\Omega \cup (\Omega^1 \cap \partial \Omega) \cup \partial^*\Omega,
\end{align}
we have
\begin{align*}
    -\int_{\Omega} \tilde{\FF} \cdot \nabla \phi\, \dr x
    = & -\int_{\mathbb{R}^n}  \tilde{\FF} \cdot \nabla \phi\, \dr x\\
    =& \int_{\mathbb{R}^n} \phi \, \dr \div \tilde{\FF}\\
    =& \int_{\Omega} \phi \, \dr\div \tilde{\FF}
      + \int_{\Omega^1 \cap \partial \Omega}\phi \, \dr \div \tilde{\FF}
       +\int_{\partial^* \Omega} \phi \, \dr \div \tilde{\FF},
\end{align*}
which, together with \eqref{l6}, implies
\begin{align}\label{bagua}
    \div \tilde{\FF} \res{\partial ^* \Omega}=2 \overline{\tilde{\FF} \cdot D\chi_{\Omega}}.
\end{align}

Since $\div \tilde{\FF} \res{\Omega}=\div \FF \res{\Omega}$,
the definition of normal traces and \eqref{l9} imply that
$\ban{\FF \cdot \nu, \cdot}_{\partial \Omega}$ is a measure $\mu$  concentrated
on $(\partial \Omega \cap \Omega^1) \cup \partial ^* \Omega$ so that
\begin{align} \label{l10}
    \int_{\mathbb{R}^n} \phi\, \dr \mu
    =-\int_{\Omega^1 \cap \partial \Omega}\phi \, \dr \div \tilde{\FF}
    -\frac{1}{2}\int_{\partial^* \Omega} \phi \, \dr \div \tilde{\FF}
    -\int_{\partial^* \Omega} \phi\, \dr \overline{\tilde{\FF} \cdot D\chi_{\Omega}}.
\end{align}
A combination of
\eqref{l7}--\eqref{l10} gives \eqref{l3}.

Since $\Omega$ is an extension domain, $\div \tilde{\FF}$ is a Radon measure.
By \eqref{cui1}, $\mu$ is a finite measure and $\mu \ll \mathscr{H}^{n-1} \res{\left((\partial \Omega \cap \Omega^1) \cup \partial^* \Omega\right)}$
so that, by the Riesz representation theorem and Theorem \ref{Federertheorem},
there exists $g \in L^{1}\left( \partial \Omega \setminus \Omega^0; \mathscr{H}^{n-1} \right)$ such that
\begin{align}
    \int_{\mathbb{R}^n} \phi\, \dr \mu
    =\int_{\partial \Omega \setminus \Omega^0} g\phi\, \dr\mathscr{H}^{n-1}.
\end{align}

If $\Omega$ satisfies \eqref{neat-2}, then, by Theorem \ref{ahax},
$\Omega$ is an extension domain for bounded divergence-measure fields
so that the above results hold.
In order to show that $g \in L^{\infty}(\partial \Omega \setminus \Omega^0; \mathscr{H}^{n-1})$ in this case,
we use the Lebesgue differentiation theorem and the argument in the proof of Theorem \ref{brandnew} as follows:

Given $x \in \partial \Omega \setminus \Omega^0$,
then, for any $r>0$, proceeding as in the proof of Theorem \ref{brandnew},
especially the derivation of \eqref{tiao'},
there exist  $E_k \Subset \Omega$ such that $E_k \rightarrow \Omega$ in $L^1$ and
\begin{align} \label{tiao}
P(E_k; B_r(x))  \lesssim_n \mathscr{H}^{n-1} \left((\partial \Omega \setminus \Omega^0) \cap B_r(x) \right).
\end{align}
Let $\FF_j$ be the sequence given in Proposition \ref{approximation}.
Since $\FF \in \mathcal{DM}^{\infty}(\Omega)$, the Gauss-Green formulas \eqref{d2}--\eqref{d3}
hold for any $\phi \in C_c^1(\mathbb{R}^n)$ so that Theorem \ref{contin} gives
\begin{equation}\label{porfavor}
 \ban{\FF_j \cdot \nu, \phi}_{\partial \Omega} \to \ban{\FF \cdot \nu, \phi}_{\partial \Omega}=\int_{\R^n} \phi \, \dr\mu.
\end{equation}
Then, for any $\phi \in C_c^1(B_r(x))$ with $|\phi| \le 1$, we compute
\begin{align*}
    \int_{\R^n} \phi \, \dr\mu
    &= \lim_{j \rightarrow \infty} \ban{\FF_j \cdot \nu, \phi}_{\partial \Omega}\\
    & = \lim_{j \rightarrow \infty} \int_{\Omega} \dr \div ( \phi \FF_j)\\
    & = \lim_{j \rightarrow \infty} \lim_{k \rightarrow \infty} \int_{E_k} \dr \div ( \phi \FF_j)\\
    &= \lim_{j \rightarrow \infty} \lim_{k \rightarrow \infty}\Big( -\int_{\partial^* E_k \cap B_r(x)} \phi \FF_j \cdot \nu_{E_k} \,\dr \mathscr{H}^{n-1}\Big)\\
    & \le \lim_{j \rightarrow \infty} \lim_{k \rightarrow \infty} \norm{\FF_j}_{\infty} P(E_k; B_r(x))\\
    &\lesssim_n  \norm{\FF}_{\infty} \mathscr{H}^{n-1} \left((\partial \Omega \setminus \Omega^0) \cap B_r(x) \right),
\end{align*}
where we have used  \eqref{633}, \eqref{tiao}--\eqref{porfavor}, and the fact that $E_k \rightarrow \Omega$ in $L^1$.
Therefore, for $\mathscr{H}^{n-1}$--{\it a.e.} $x \in \partial \Omega \setminus \Omega^0$,
the Lebesgue differentiation theorem yields
\begin{align*}
    |g(x)|=\lim_{r \rightarrow 0} \frac{|\mu(B_r(x))|}{\mathscr{H}^{n-1} \left(\left(\partial \Omega \setminus \Omega^0 \right) \cap B_r(x) \right)} \lesssim_n \norm{\FF}_{\infty}.
       \end{align*}
 This completes the proof.
\end{proof}

Using the same method as in Theorem \ref{da},
we now provide a new elementary proof (see Theorem \ref{GGF} below) of the
Gauss-Green formula \eqref{mia}

We recall the following product rule, which is a particular case of \eqref{PRDMF}:
\begin{proposition}\label{prod'}
Let $\GG \in \mathcal{DM}^{\infty}(\Omega)$ and $\phi \in C_c^1(\Omega)$,
then $\div (\phi \GG) \in \mathcal{DM}^{\infty}(\Omega)$ with
\begin{align}\label{prod}
    \div (\phi \GG)=\phi\,\div\,\GG+ \nabla \phi \cdot \GG.
\end{align}
\end{proposition}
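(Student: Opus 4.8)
The plan is to establish \eqref{prod} directly as an identity of distributions and then to read off membership in $\mathcal{DM}^{\infty}(\Omega)$ from it. Since $\phi \in C_c^1(\Omega)$ is bounded and $\GG \in L^{\infty}(\Omega;\mathbb{R}^n)$, we have $\phi\GG \in L^{\infty}(\Omega;\mathbb{R}^n)$ immediately, so the only substantive task is to compute the distributional divergence of $\phi\GG$ and verify that it is a finite Radon measure.

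First I would fix an arbitrary test function $\psi \in C_c^{\infty}(\Omega)$ and expand, by the very definition of distributional divergence,
\[
\langle \div(\phi\GG), \psi \rangle = -\int_{\Omega} \phi\, \GG \cdot \nabla \psi \, \dr x .
\]
The one algebraic step is the elementary Leibniz rule for the two $C^1$ scalars $\phi$ and $\psi$, namely $\phi\,\nabla\psi = \nabla(\phi\psi) - \psi\,\nabla\phi$. Substituting this gives
\[
\langle \div(\phi\GG), \psi \rangle
= -\int_{\Omega} \GG \cdot \nabla(\phi\psi)\, \dr x
+ \int_{\Omega} \psi\,\GG \cdot \nabla\phi \, \dr x .
\]

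Next I would observe that $\phi\psi \in C_c^1(\Omega)$, hence it is an admissible test function against the finite Radon measure $\div\GG$, so the first integral equals $\int_{\Omega} \phi\psi \, \dr\,\div\GG$; the second integral is simply the pairing of the locally integrable function $\GG\cdot\nabla\phi$ against $\psi$. Thus
\[
\langle \div(\phi\GG), \psi \rangle
= \int_{\Omega} \phi\psi \, \dr\,\div\GG
+ \int_{\Omega} \psi\,(\GG\cdot\nabla\phi)\, \dr x ,
\]
which is exactly the action of the measure $\phi\,\div\GG + (\nabla\phi\cdot\GG)\,\mathcal{L}^n$ on $\psi$. As $\psi$ is arbitrary, this yields \eqref{prod}. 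Finally, the right-hand side is a finite Radon measure on the bounded set $\Omega$: the term $\phi\,\div\GG$ is finite since $\phi$ is bounded and $|\div\GG|(\Omega)<\infty$, while $(\nabla\phi\cdot\GG)\,\mathcal{L}^n$ is finite since $\nabla\phi\cdot\GG \in L^{\infty}(\Omega)\subset L^1(\Omega)$; hence $\phi\GG \in \mathcal{DM}^{\infty}(\Omega)$.

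There is essentially no serious obstacle here, the statement being a direct consequence of the Leibniz rule together with the defining property of $\div\GG$ as a measure. The only point that deserves a word of care is the admissibility of $\phi\psi$ as a test function: since $\div\GG$ is already known to be a finite Radon measure, the identity $\int_{\Omega} \eta\,\dr\,\div\GG = -\int_{\Omega}\GG\cdot\nabla\eta\,\dr x$ extends from $\eta \in C_c^{\infty}(\Omega)$ to $\eta \in C_c^1(\Omega)$, by approximating in the $C^1$ topology on a fixed compact set. Alternatively, the whole statement follows at once from Theorem \ref{productruleinfty} applied with $g=\phi$, using that the precise representative of the continuous function $\phi$ is $\phi$ itself and that $\overline{\GG\cdot D\phi} = (\GG\cdot\nabla\phi)\,\mathcal{L}^n$ since $D\phi = \nabla\phi\,\mathcal{L}^n$; I would present the self-contained computation above as the primary argument and note this reduction as a remark.
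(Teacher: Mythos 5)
Your argument is correct. Note, though, that the paper does not actually prove this proposition: it simply records it as "a particular case of \eqref{PRDMF}," i.e.\ it invokes Theorem \ref{productruleinfty} with $g=\phi$, exactly as in the reduction you mention at the end. Your primary, self-contained computation is therefore a genuinely different (and more elementary) route: it needs only the Leibniz rule for the two $C^1$ scalars $\phi$ and $\psi$ and the defining property of $\div\GG$ as a finite Radon measure, rather than the full Chen--Frid product rule for $BV\cap L^\infty$ multipliers, with its precise representative $g^*$ and the weak-star limit pairing $\overline{\FF\cdot Dg}$. What the direct computation buys is transparency: because $\phi$ is $C^1$, the identifications $g^*=\phi$ and $\overline{\GG\cdot D\phi}=(\GG\cdot\nabla\phi)\,\mathcal{L}^n$ that one would otherwise have to justify when specializing Theorem \ref{productruleinfty} are built in from the start, and the only point requiring care --- extending the test-function class for $\div\GG$ from $C_c^\infty(\Omega)$ to $C_c^1(\Omega)$ so that $\phi\psi$ is admissible --- is one you correctly flag and which follows by mollification on a fixed compact set. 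What the paper's route buys is brevity and consistency, since Theorem \ref{productruleinfty} is already part of its toolkit. Either presentation is acceptable; yours has the advantage of being independent of the deeper theorem.
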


Then we have the following key proposition:
\begin{proposition}\label{hkey}
Let $\Omega \subset \mathbb{R}^n$ be an open set, and
let $E$ be a set of finite perimeter with $E \Subset\Omega$.
Assume that $\GG \in \mathcal{DM}^{\infty}(\Omega)$.
Then
\begin{align}\label{jh}
    \int_{E^1} \dr\div (\phi \GG)
    +\frac{1}{2} \int_{\partial^* E} \dr\div (\phi \GG)
    =-\int_{\partial^* E} \phi\, \dr \, \overline{\GG \cdot D\chi_{E}}
    \quad\,\mbox{for any $\phi \in C_c^1(\Omega)$}.
\end{align}
Equivalently,
\begin{align}    \label{h4'}
    \int_{E} \GG \cdot \nabla \phi\, \dr x
     +\int_{E^1} \phi  \, \dr \div\,\GG
     +\frac{1}{2}\int_{\partial^* E}\phi  \, \dr \div\,\GG
     =-\int_{\partial^* E} \phi\, \dr\, \overline{\GG \cdot D\chi_{E}}
\end{align}
for any $\phi \in C_c^1(\Omega)$.
\end{proposition}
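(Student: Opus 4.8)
The plan is to adapt the proof of Theorem \ref{da}, replacing $\Omega$ by the compactly contained set $E$ and the ambient field $\tilde{\FF}$ by $\phi\GG$. By Proposition \ref{prod'} the field $\phi\GG$ belongs to $\DM^{\infty}(\Omega)$, and since $E\Subset\Omega$ and $\phi\in C_c^1(\Omega)$, the mollified field $(\chi_E*\rho_\epsilon)\,\phi\GG$ is a bounded divergence-measure field whose support is compactly contained in $\Omega$ for all small $\epsilon$. First I would record the elementary but crucial fact that a compactly supported divergence-measure field in $\Omega$ has zero total divergence mass: choosing a cutoff $\psi\in C_c^\infty(\Omega)$ equal to $1$ on a neighborhood of its support, one gets
\[
\int_\Omega \dr\div\big((\chi_E*\rho_\epsilon)\,\phi\GG\big) = -\int_\Omega (\chi_E*\rho_\epsilon)\,\phi\GG\cdot\nabla\psi\,\dr x = 0.
\]

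Next, applying the classical product rule \eqref{prod} (Proposition \ref{prod'}) to the smooth scalar $\chi_E*\rho_\epsilon$ and the field $\phi\GG$, and integrating the resulting identity over $\Omega$, the vanishing total mass yields
\[
\int_\Omega (\chi_E*\rho_\epsilon)\,\dr\div(\phi\GG) + \int_\Omega \phi\,\big(\GG\cdot\nabla(\chi_E*\rho_\epsilon)\big)\,\dr x = 0.
\]
The main work is then to pass to the limit $\epsilon\to0$ in the two terms. For the first term I would use that $\chi_E*\rho_\epsilon\to\chi_E^{*}$ pointwise $\mathscr{H}^{n-1}$-a.e., with limit $1,\tfrac12,0$ on $E^1,\partial^* E,E^0$ respectively, together with $|\div(\phi\GG)|\ll\mathscr{H}^{n-1}$ from \eqref{cui1} and Federer's Theorem \ref{Federertheorem} (so that the measure-theoretic boundary reduces to $\partial^* E$); dominated convergence then produces $\int_{E^1}\dr\div(\phi\GG)+\tfrac12\int_{\partial^* E}\dr\div(\phi\GG)$. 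For the second term I would invoke the very definition of $\overline{\GG\cdot D\chi_E}$ as the weak-star limit of $\GG\cdot\nabla(\chi_E)_\epsilon$; since $\phi$ is continuous with compact support it is an admissible test function, so the integral converges to $\int_{\partial^* E}\phi\,\dr\overline{\GG\cdot D\chi_E}$, the limiting measure being concentrated on $\partial^* E$. Combining these limits gives exactly \eqref{jh}.

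Finally, to obtain the equivalent form \eqref{h4'} I would expand $\div(\phi\GG)=\phi\,\div\GG+\nabla\phi\cdot\GG$ from Proposition \ref{prod'} inside \eqref{jh}. The absolutely continuous part contributes $\int_{E^1}\nabla\phi\cdot\GG\,\dr x=\int_E\nabla\phi\cdot\GG\,\dr x$, since $E$ and $E^1$ differ by a Lebesgue-null set, while the corresponding integral over $\partial^* E$ vanishes because $\Leb{n}(\partial^* E)=0$; the remaining $\phi\,\div\GG$ terms rearrange into \eqref{h4'}.

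I expect the one genuinely delicate point to be the interchange of the $\phi$-weighting with the weak-star limit defining $\overline{\GG\cdot D\chi_E}$, that is, verifying $\overline{(\phi\GG)\cdot D\chi_E}=\phi\,\overline{\GG\cdot D\chi_E}$; this is where the continuity of $\phi$ is essential and is the analog of the corresponding step in the proof of Theorem \ref{da}. The zero-total-mass observation, although elementary, is the structural heart that replaces the boundary bookkeeping of the extension-domain setting, and it is precisely what makes the hypothesis $E\Subset\Omega$ indispensable.
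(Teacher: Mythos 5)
Your proof is correct and follows essentially the same route as the paper: mollify $\chi_E$, test $\div(\phi\GG)$ against $\chi_E*\rho_\epsilon$, and pass to the limit using the pointwise values $1,\tfrac12,0$ on $E^1,\partial^*E,E^0$, the absolute continuity $|\div(\phi\GG)|\ll\mathscr{H}^{n-1}$, and the weak-star convergence defining $\overline{\GG\cdot D\chi_E}$. The only difference is cosmetic: you derive the starting identity via the zero-total-mass observation plus the product rule, whereas the paper obtains it in one line as the distributional divergence of the compactly supported field $\phi\GG$ tested against the smooth function $\chi_E*\rho_\epsilon$ (and note that your final ``delicate point'' about $\overline{(\phi\GG)\cdot D\chi_E}=\phi\,\overline{\GG\cdot D\chi_E}$ is not actually needed, since weak-star convergence of $\GG\cdot\nabla(\chi_E*\rho_\epsilon)\,\Leb{n}$ tested against the fixed $\phi\in C_c^1$ already suffices).
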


\begin{proof}  Notice that
\begin{align}\label{l4'}
    \int \chi_{E} * \rho_{\epsilon} \, \dr\div (\phi \GG)
    =-\int \phi\, \GG \cdot \nabla (\chi_{E} * \rho_{\epsilon})\, \dr x.
\end{align}
Since,  for $\mathscr{H}^{n-1}-a.e. \,\, x \in \mathbb{R}^n$,
\begin{equation}
\lim_{\epsilon \rightarrow 0} (\chi_{E} * \rho_{\epsilon})(x)
= \begin{cases}
1 \quad & \mbox{for $x \in E^1$},\\
\frac{1}{2} \quad &\mbox{for  $x \in \partial^* E$},\\
0 \quad & \mbox{for $x \in E^0$},
\end{cases}
\end{equation}
and \eqref{cui1} holds,
then letting $\epsilon \rightarrow 0$ in \eqref{l4'} yields \eqref{jh},
where we have used the known fact that $\overline{\GG \cdot D\chi_{E}}$
is a bounded measure concentrated on $\partial^* E$; see \cite[page 251]{ChenTorres}
for a proof.
Note that \eqref{h4'} is equivalent to \eqref{jh} in view of \eqref{prod}
and the fact that $|E^1 \Delta E|=0$.
\end{proof}

\begin{corollary} \label{chais}
Let $E \Subset\Omega$ and $\GG \in \mathcal{DM}^{\infty}(\Omega)$ with $\GG=0$ outside $E$.
Then $\div\, \GG$ is a measure concentrated on $E^1 \cup \partial^* E${\rm :}
\begin{align}\label{spt}
     \div\,\GG =\div\, \GG \res( E^1 \cup \partial^* E).
\end{align}
\end{corollary}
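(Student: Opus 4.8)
The plan is to read the conclusion directly off the product-rule identity \eqref{h4'} in Proposition \ref{hkey}, feeding in only the hypothesis that $\GG=0$ outside $E$ together with the absolute continuity $\div\GG\ll\mathscr{H}^{n-1}$ recorded in \eqref{cui1}. First I would rewrite the volume term in \eqref{h4'}: since $\GG$ vanishes a.e.\ outside $E$, for every $\phi\in C_c^1(\Omega)$ we have $\int_E\GG\cdot\nabla\phi\,\dr x=\int_\Omega\GG\cdot\nabla\phi\,\dr x=-\int_\Omega\phi\,\dr\div\GG$, the last equality being the definition of the distributional divergence. Next, because $E$ is a set of finite perimeter, Federer's Theorem \ref{Federertheorem} gives $\mathbb{R}^n=E^1\cup E^0\cup\partial^m E$ with $\mathscr{H}^{n-1}(\partial^m E\setminus\partial^* E)=0$; combined with $\div\GG\ll\mathscr{H}^{n-1}$ this lets me split $\int_\Omega\phi\,\dr\div\GG=\int_{E^1}\phi\,\dr\div\GG+\int_{E^0}\phi\,\dr\div\GG+\int_{\partial^* E}\phi\,\dr\div\GG$.

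Substituting both observations into \eqref{h4'}, the two $E^1$-integrals cancel and the $\partial^* E$-integrals combine, leaving
\[
\int_{E^0}\phi\,\dr\div\GG
= -\tfrac12\int_{\partial^* E}\phi\,\dr\div\GG
+\int_{\partial^* E}\phi\,\dr\,\overline{\GG\cdot D\chi_{E}}
\qquad\text{for all }\phi\in C_c^1(\Omega).
\]
The left-hand side is the action of the Radon measure $\div\GG\res E^0$, while the right-hand side is the action of a Radon measure concentrated on $\partial^* E$ (recall $\overline{\GG\cdot D\chi_{E}}$ is concentrated on $\partial^* E$). Since $C_c^1(\Omega)$ separates finite Radon measures, these two measures coincide.

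Now $E^0$ and $\partial^* E$ are disjoint (density $0$ versus density $\tfrac12$), so the two measures above are mutually singular; a Radon measure that equals a measure singular to it must vanish. Hence $\div\GG\res E^0=0$. Since $\div\GG$ is already concentrated on $E^1\cup E^0\cup\partial^* E$ — the remaining piece $\partial^m E\setminus\partial^* E$ being $\div\GG$-null by \eqref{cui1} — discarding the $E^0$ part yields $\div\GG=\div\GG\res(E^1\cup\partial^* E)$, which is exactly \eqref{spt}. (As a byproduct, the vanishing of the right-hand measure also gives $\div\GG\res\partial^* E=2\,\overline{\GG\cdot D\chi_{E}}$.)

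I expect the only genuinely non-routine step to be this mutual-singularity observation: one must recognize that the identity above equates a measure living on $E^0$ with one living on the disjoint set $\partial^* E$, which forces both to be zero. Everything else — rewriting the volume integral via the definition of divergence, the Federer decomposition, and using $\div\GG\ll\mathscr{H}^{n-1}$ to ignore $\partial^m E\setminus\partial^* E$ — is bookkeeping resting on \eqref{h4'} and \eqref{cui1}.
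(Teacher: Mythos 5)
Your proof is correct and follows essentially the same route as the paper's: both hinge on \eqref{h4'} together with the identity $\int_E\GG\cdot\nabla\phi\,\dr x=\int_\Omega\GG\cdot\nabla\phi\,\dr x=-\int_\Omega\phi\,\dr\div\GG$, available because $\GG$ vanishes off $E$. The paper simply reads off from this that $\div\GG$ coincides with a measure concentrated on $E^1\cup\partial^*E$, whereas you additionally decompose via Federer's theorem, isolate the $E^0$ piece, and kill it by mutual singularity --- a harmless elaboration of the same computation.
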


\begin{proof}
For any $\phi \in C_c^1(\mathbb{R}^n)$, we employ \eqref{h4'} to obtain
 \begin{align*}
     \int_{\mathbb{R}^n} \phi  \, \dr \div\,\GG
     =&\,-\int_{\mathbb{R}^n} \GG \cdot \nabla \phi\, \dr x
     = -\int_E \GG \cdot \nabla \phi \, \dr x\\
     =&\,\int_{E^1} \phi  \, \dr \div\, \GG
      +\frac{1}{2}\int_{\partial^* E}\phi  \, \dr \div\,\GG
      + \int_{\partial^* E} \phi\, \dr \overline{\GG \cdot D\chi_{E}}.
 \end{align*}
 This completes the proof.
\end{proof}

We note that, in the case of $BV$ functions, we have the following similar result, as shown in
\cite[Theorem 3.84]{afp}.
\begin{remark}
Let $f \in BV(\Omega)$ and $E \Subset \Omega$. If $f=0$ on $E^c$, then
\begin{align}\label{cy7}
    Df = Df \res(E^1 \cup \partial^* E).
\end{align}
\end{remark}

\begin{remark}
The result above does not require the boundedness of BV functions,
thanks to the coarea formula.
It would be interesting to prove Corollary {\rm \ref{chais}} for divergence-measure fields
without the boundedness assumption.
\end{remark}

The following statement is a consequence of Proposition \ref{hkey} and Corollary \ref{chais}.
\begin{lemma}\label{bangz}
Let $E \Subset\Omega$ and $\GG \in \mathcal{DM}^{\infty}(\Omega)$.
Then
\begin{align}\label{tuif}
  \int_{\partial^* E} \phi\, \dr \overline{(\chi_E-\chi_{E^c})\GG \cdot D\chi_E}
  =\frac{1}{2}\int_{\partial^* E} \dr\div (\phi \GG) \qquad\mbox{for any $\phi \in C_c^1(\Omega)$}.
\end{align}
\end{lemma}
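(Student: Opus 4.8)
The plan is to reduce the whole statement to a single application of the product rule in Theorem \ref{productruleinfty}. First I would record that the field $(\chi_E - \chi_{E^c})\GG = (2\chi_E - 1)\GG$ belongs to $\mathcal{DM}^{\infty}(\Omega)$: since $E \Subset \Omega$ has finite perimeter, $2\chi_E - 1 \in BV(\Omega)\cap L^{\infty}(\Omega)$, and $\GG \in \mathcal{DM}^{\infty}(\Omega)$, so Theorem \ref{productruleinfty} applies and the measure $\overline{(\chi_E - \chi_{E^c})\GG\cdot D\chi_E}$ is well defined, concentrated on $\partial^* E$, and absolutely continuous with respect to $|D\chi_E| = \mathscr{H}^{n-1}\res\partial^* E$.

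Next I would apply the product rule \eqref{PRDMF} with $g = \chi_E$ to the field $\FF := (2\chi_E - 1)\GG$. The crucial algebraic simplification is $\chi_E(2\chi_E - 1) = \chi_E$ (because $\chi_E^2 = \chi_E$), so that $\chi_E\FF = \chi_E\GG$ and the left-hand side of the product rule collapses to $\div(\chi_E\GG)$. Writing $\div\FF = 2\div(\chi_E\GG) - \div\GG$ by linearity, the product rule then yields the measure identity
\[
\overline{(\chi_E - \chi_{E^c})\GG\cdot D\chi_E} = (1 - 2\chi_E^*)\,\div(\chi_E\GG) + \chi_E^*\,\div\GG,
\]
where $\chi_E^*$ is the precise representative of $\chi_E$. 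This manipulation is legitimate precisely because $\div(\chi_E\GG)$ and $\div\GG$ are both absolutely continuous with respect to $\mathscr{H}^{n-1}$ by \eqref{cui1}, so that the products of the $\mathscr{H}^{n-1}$-a.e.\ defined function $\chi_E^*$ against these measures are meaningful.

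Then I would restrict this identity to $\partial^* E$. Since the left-hand side is already concentrated on $\partial^* E$, and since $\chi_E^* = \tfrac12$ holds $\mathscr{H}^{n-1}$-a.e.\ on $\partial^* E$ (the standard density property of the reduced boundary), the coefficient $1 - 2\chi_E^*$ vanishes there while $\chi_E^* = \tfrac12$; hence the first term drops out and I obtain $\overline{(\chi_E - \chi_{E^c})\GG\cdot D\chi_E} = \tfrac12\,\div\GG\res\partial^* E$. Finally, integrating against $\phi \in C_c^1(\Omega)$ and invoking Proposition \ref{prod'} together with $\Leb{n}(\partial^* E) = 0$ — so that the absolutely continuous term $\nabla\phi\cdot\GG\,\dr x$ contributes nothing on $\partial^* E$ and therefore $\int_{\partial^* E}\dr\div(\phi\GG) = \int_{\partial^* E}\phi\,\dr\div\GG$ — gives exactly \eqref{tuif}.

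The main obstacle is the restriction step: one must verify both that multiplying $\chi_E^*$ against the measures $\div(\chi_E\GG)$ and $\div\GG$ is well defined and that the term $(1 - 2\chi_E^*)\div(\chi_E\GG)$ truly disappears on $\partial^* E$. Both points rest on the absolute continuity \eqref{cui1} and the identification $\chi_E^* = \tfrac12$ on $\partial^* E$; without $\div(\chi_E\GG) \ll \mathscr{H}^{n-1}$ the first term could not be discarded, since $\chi_E^*$ would only be pinned down Lebesgue-a.e.\ and its values on the $\Leb{n}$-null set $\partial^* E$ would be ambiguous.
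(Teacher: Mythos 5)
Your proof is correct, but it takes a genuinely different route from the paper's. The paper splits $\GG = \chi_E\GG + \chi_{E^c}\GG$ and applies the mollification identity \eqref{jh} of Proposition \ref{hkey} to each piece separately, using Corollary \ref{chais} to localize $\div(\phi\chi_E\GG)$ on $E^1\cup\partial^*E$ (so the first term of \eqref{jh} can be eliminated via $\int_\Omega \dr\div(\phi\chi_E\GG)=0$) and to localize $\div(\phi\chi_{E^c}\GG)$ on $E^0\cup\partial^*E$ (so the first term of \eqref{jh} vanishes outright); adding the two resulting identities gives \eqref{tuif}. You instead make a single application of the Chen--Frid product rule \eqref{PRDMF} with $g=\chi_E$ to the field $(2\chi_E-1)\GG$, exploit the idempotence $\chi_E(2\chi_E-1)=\chi_E$, and restrict the resulting measure identity to $\partial^*E$ using $\chi_E^*=\tfrac12$ there together with \eqref{cui1}; this yields the measure-level identity $\overline{(\chi_E-\chi_{E^c})\GG\cdot D\chi_E}=\tfrac12\div\GG\res\partial^*E$, which is precisely \eqref{h3} of Corollary \ref{aoye}, and \eqref{tuif} then follows by integrating against $\phi$ and discarding the Lebesgue-absolutely-continuous part of $\div(\phi\GG)$ on the $\Leb{n}$-null set $\partial^*E$. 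In effect you invert the paper's logical order: the paper deduces Corollary \ref{aoye} from Lemma \ref{bangz}, whereas you prove the stronger statement \eqref{h3} first. What the paper's route buys is modularity (Proposition \ref{hkey} and Corollary \ref{chais} are reused elsewhere) and independence from the precise-representative formulation, since the values $1$, $\tfrac12$, $0$ of $\lim_\epsilon\chi_E*\rho_\epsilon$ enter only through the explicit limit in \eqref{jh}; what your route buys is brevity and a direct derivation of the refined measure identity, at the cost of having to justify carefully — as you correctly do — that multiplying the $\Haus{n-1}$-a.e.\ defined function $\chi_E^*$ against $\div(\chi_E\GG)$ and $\div\GG$ is meaningful and that $(1-2\chi_E^*)\div(\chi_E\GG)\res\partial^*E=0$, both of which hinge on \eqref{cui1}.
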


\begin{proof}
By Corollary \ref{chais} and Proposition \ref{productruleinfty}, $\div (\phi \chi_E \GG)$ is a measure concentrated on $ E^1 \cup \partial ^* E$
so that
 \begin{align}\label{jin}
0=\int_{\Omega} \dr\div (\phi \chi_E \GG) = \int_{E^1 \cup \partial^* E} \dr\div (\phi \chi_E \GG).
 \end{align}
Hence, \eqref{jh} and \eqref{jin} imply
\begin{align}\label{dt1}
 \int_{\partial^* E} \phi\, \dr \overline{\chi_E\GG \cdot D\chi_E}
 =\frac{1}{2}\int_{\partial^* E} \dr\div (\phi \chi_E\GG).
\end{align}

On the other hand, $\chi_{E^c}\GG=0$ on $E$. Then, again by Corollary \ref{chais},
$ \div (\phi \chi_{E^c}\GG)$ is a measure on $(E^c)^1 \cup \partial^*(E^c) = E^0 \cup \partial^* E$.
With $\GG$ replaced by $\chi_{E^c}\GG$ in \eqref{jh}, the first term vanishes so that
\begin{align}\label{dt2}
     -\int_{\partial^* E} \phi\, \dr \overline{\chi_{E^c}\GG \cdot D\chi_E}
     =\frac{1}{2}\int_{\partial^* E} \dr\div (\phi \chi_{E^c}\GG).
\end{align}
Adding \eqref{dt1}--\eqref{dt2} together gives \eqref{tuif}.
\end{proof}

As a byproduct, the following result is immediate from Proposition \ref{prod'} and Lemma \ref{bangz}:

\begin{corollary}\label{aoye}
Let $\GG \in \mathcal{DM}^{\infty}(\Omega)$, and let $E \Subset\Omega$ be a set of finite perimeter.
Then the following identity holds{\rm :}
\begin{align}    \label{h3}
    \overline{(\chi_E-\chi_{E^c})\GG  \cdot D\chi_E}=\frac{1}{2}\div\, \GG\res{\partial^* E} .
\end{align}
\end{corollary}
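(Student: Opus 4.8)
The plan is to read off the identity \eqref{h3} by substituting the product rule (Proposition \ref{prod'}) into the formula of Lemma \ref{bangz} and discarding a term supported on a Lebesgue-null set.

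First I would verify that both sides of \eqref{h3} are genuine finite Radon measures concentrated on $\partial^* E$, so that equality tested against every $\phi \in C_c^1(\Omega)$ promotes to equality of measures. The right-hand side $\frac{1}{2}\div\,\GG\res\partial^* E$ is of this form by construction. For the left-hand side, note that $\chi_E \in BV(\Omega) \cap L^\infty(\Omega)$ since $E$ is a set of finite perimeter, so Theorem \ref{productruleinfty} gives $\chi_E\GG \in \mathcal{DM}^\infty(\Omega)$, whence $(\chi_E-\chi_{E^c})\GG = 2\chi_E\GG - \GG \in \mathcal{DM}^\infty(\Omega)$; applying Theorem \ref{productruleinfty} once more with $g=\chi_E$ shows that $\overline{(\chi_E-\chi_{E^c})\GG \cdot D\chi_E}$ is absolutely continuous with respect to $|D\chi_E| = \Haus{n-1}\res\partial^* E$, hence concentrated on $\partial^* E$.

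Next I would start from Lemma \ref{bangz} and expand the divergence on the right by Proposition \ref{prod'}:
\begin{align*}
  \int_{\partial^* E} \phi\, \dr \overline{(\chi_E-\chi_{E^c})\GG \cdot D\chi_E}
  = \frac{1}{2}\int_{\partial^* E} \dr\div (\phi \GG)
  = \frac{1}{2}\int_{\partial^* E} \phi\, \dr\div\,\GG
    + \frac{1}{2}\int_{\partial^* E} \nabla\phi \cdot \GG\, \dr x.
\end{align*}
The decisive observation is that the last term vanishes: since $\GG \in L^\infty(\Omega;\R^n)$ and $\nabla\phi$ is bounded, the measure $(\nabla\phi \cdot \GG)\,\Leb{n}$ is absolutely continuous with respect to Lebesgue measure, and $\Leb{n}(\partial^* E)=0$, so its restriction to $\partial^* E$ is zero.

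Hence, for every $\phi \in C_c^1(\Omega)$,
\begin{align*}
  \int \phi\, \dr \overline{(\chi_E-\chi_{E^c})\GG \cdot D\chi_E}
  = \frac{1}{2}\int \phi\, \dr\big(\div\,\GG\res\partial^* E\big),
\end{align*}
where the integrals may be extended to all of $\R^n$ because both measures are concentrated on $\partial^* E$. Letting $\phi$ range over $C_c^1(\Omega)$ yields the equality of measures \eqref{h3}. I do not expect a genuine obstacle, as the statement is essentially immediate once Lemma \ref{bangz} is available; the only point demanding care is the measure-theoretic bookkeeping of the two preceding paragraphs — confirming that both sides are concentrated on $\partial^* E$ and that the cross term $\nabla\phi \cdot \GG\,\dr x$ drops out precisely because $\partial^* E$ carries no Lebesgue mass.
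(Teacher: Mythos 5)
Your argument is correct and is exactly the route the paper takes: Corollary \ref{aoye} is obtained by combining Lemma \ref{bangz} with the product rule of Proposition \ref{prod'}, the cross term $(\nabla\phi\cdot\GG)\,\Leb{n}\res\partial^*E$ vanishing because $\Leb{n}(\partial^*E)=0$. Your additional bookkeeping (both measures being concentrated on $\partial^*E$, so that testing against $\phi\in C_c^1(\Omega)$ upgrades to equality of measures) is precisely the content the paper leaves implicit when it calls the corollary ``immediate.''
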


Plugging \eqref{h3}, with $\GG$ replaced by $\FF$, into \eqref{h4'} immediately yields
the following Gauss-Green formula on sets of finite perimeter:

\begin{theorem}\label{GGF}
Let $\Omega \subset \mathbb{R}^n$ be an open set, and let $E$ be a set of finite perimeter
with $E \Subset \Omega$.
Assume that $\FF \in \mathcal{DM}^{\infty}(\Omega)$.
Then, for any $\phi \in C_c^1(\Omega)$,
\begin{align}\label{h4}
    \int_E \FF \cdot \nabla \phi\, \dr x
    +\int_{E^1} \phi \, \dr \div \FF
    =-2 \int_{\partial^* E} \phi\, \dr \overline{\chi_E \FF \cdot D\chi_{E}}.
\end{align}
\end{theorem}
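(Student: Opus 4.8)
The plan is to derive \eqref{h4} directly from the two identities already established for $\GG=\FF$: the product-rule integration formula \eqref{h4'} of Proposition \ref{hkey} and the normal-trace identity \eqref{h3} of Corollary \ref{aoye}. The only discrepancy between the left-hand side of \eqref{h4'} and that of the desired \eqref{h4} is the extra half-boundary term $\frac{1}{2}\int_{\partial^* E}\phi\,\dr\div\FF$; the strategy is to move this term to the right-hand side and absorb it, together with the term $-\int_{\partial^* E}\phi\,\dr\overline{\FF\cdot D\chi_E}$ already sitting there, into a single multiple of $\overline{\chi_E\FF\cdot D\chi_E}$.

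First I would rewrite the half-boundary term using \eqref{h3} with $\GG=\FF$, which reads $\frac{1}{2}\div\FF\res\partial^* E=\overline{(\chi_E-\chi_{E^c})\FF\cdot D\chi_E}$, so that
\begin{align*}
\frac{1}{2}\int_{\partial^* E}\phi\,\dr\div\FF
=\int_{\partial^* E}\phi\,\dr\overline{(\chi_E-\chi_{E^c})\FF\cdot D\chi_E}.
\end{align*}
Substituting this into \eqref{h4'} and transferring it to the right, the two boundary contributions there combine into $-\int_{\partial^* E}\phi\,\dr\big(\overline{\FF\cdot D\chi_E}+\overline{(\chi_E-\chi_{E^c})\FF\cdot D\chi_E}\big)$, while the left-hand side collapses to exactly $\int_E\FF\cdot\nabla\phi\,\dr x+\int_{E^1}\phi\,\dr\div\FF$, the left-hand side of \eqref{h4}.

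The key algebraic step is the observation that $\chi_E+\chi_{E^c}\equiv 1$, hence $\FF=\chi_E\FF+\chi_{E^c}\FF$ pointwise. Using the linearity of the weak-star-limit product measure $\overline{\,\cdot\,\cdot D\chi_E}$ in its vector-field argument, one has
\begin{align*}
\overline{\FF\cdot D\chi_E}&=\overline{\chi_E\FF\cdot D\chi_E}+\overline{\chi_{E^c}\FF\cdot D\chi_E},\\
\overline{(\chi_E-\chi_{E^c})\FF\cdot D\chi_E}&=\overline{\chi_E\FF\cdot D\chi_E}-\overline{\chi_{E^c}\FF\cdot D\chi_E}.
\end{align*}
Adding these two identities cancels the $\chi_{E^c}\FF$ contribution and leaves $2\,\overline{\chi_E\FF\cdot D\chi_E}$, which produces precisely the right-hand side $-2\int_{\partial^* E}\phi\,\dr\overline{\chi_E\FF\cdot D\chi_E}$ of \eqref{h4} and completes the argument.

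I expect the main (and essentially only) obstacle to be the rigorous justification of the linearity invoked above: one must realize the three measures $\overline{\FF\cdot D\chi_E}$, $\overline{\chi_E\FF\cdot D\chi_E}$, and $\overline{\chi_{E^c}\FF\cdot D\chi_E}$ as weak-star limits of $\GG\cdot\nabla(\chi_E*\rho_\epsilon)$ along a \emph{common} sequence $\epsilon_j\to 0$, so that their linear combinations pass to the limit consistently. Since $\chi_E\FF,\chi_{E^c}\FF\in\mathcal{DM}^\infty(\Omega)$ by the product rule (Theorem \ref{productruleinfty}), each associated measure is finite and concentrated on $\partial^* E$ by the fact recalled in the proof of Proposition \ref{hkey}; a diagonal choice of the mollification parameter then secures the common subsequence, after which additivity is immediate and no further estimates are required.
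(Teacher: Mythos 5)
Your proposal is correct and follows essentially the same route as the paper, which obtains \eqref{h4} by substituting \eqref{h3} into \eqref{h4'} and combining $\overline{\FF\cdot D\chi_E}+\overline{(\chi_E-\chi_{E^c})\FF\cdot D\chi_E}=2\,\overline{\chi_E\FF\cdot D\chi_E}$; you merely make the "immediately yields" step explicit. The linearity you worry about needs no diagonal subsequence argument, since by the product rule each pairing is uniquely determined by $\overline{\GG\cdot D\chi_E}=\div(\chi_E\GG)-\chi_E^{*}\div\GG$, which is manifestly linear in $\GG$.
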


\begin{remark}
It is well known that measure $2\overline{\chi_E \FF \cdot D\chi_E}$,
which is concentrated on $\partial^* E$,
corresponds to an $L^{\infty}$ function on $\partial^* E$.
This $L^{\infty}$ function is called the interior normal trace of $\FF$ on $\partial E$, denoted by $\mathfrak{F}_{\rm i} \cdot \nu_E$.
\end{remark}

Applying the previous arguments to $\tilde{\FF}$ and viewing $\Omega$ as a set that is compactly contained in $\R^n$,
we now give the following {\it up to the boundary} Gauss-Green formula corresponding to $\ban{\FF \cdot \nu,\, \cdot}_{\partial E^1}$
that does not require $E \Subset \Omega$:

\begin{theorem}
Let $\Omega$ be a bounded open set satisfying \eqref{neat-2}, and let $\FF \in \DM^{\infty}(\Omega)$.
Then, for any set $E \subset \Omega$ of locally finite perimeter and $\phi \in C_c^1(\mathbb{R}^n)$,
\begin{align*}
&\int_{E^{1}} \phi \, \dr \div \FF + \int_{E} \FF \cdot \nabla \phi \, \dr x
 =  -\int_{\redb E} \phi \, \mathfrak{F}_{\rm i} \cdot \nu_{E} \, \dr \Haus{n - 1},
\end{align*}
where $\mathfrak{F}_{\rm i} \cdot \nu_{E} \in L^{\infty}(\partial^*E; \mathscr{H}^{n-1})$
is the interior normal trace, which also satisfies
\begin{align}\label{cui3}
\int_{\redb E} \phi \, \mathfrak{F}_{\rm i} \cdot \nu_{E} \, \dr \Haus{n - 1}
=2 \int_{\partial^* E} \phi\, \dr \overline{\chi_E \FF \cdot D\chi_{E}}.
\end{align}
\end{theorem}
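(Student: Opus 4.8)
The plan is to eliminate the hypothesis $E \Subset \Omega$ by passing to the zero-extension $\tilde{\FF}$ and working in the whole space, where boundedness of $E$ makes compact containment automatic. First I would use that $\Omega$ satisfies \eqref{neat-2}, so by Theorem \ref{ahax} the field $\tilde{\FF}$ defined in \eqref{j1} belongs to $\DM^{\infty}(\R^{n})$; in particular $\div\tilde{\FF}$ is a finite Radon measure on all of $\R^{n}$ which restricts to $\div\FF$ on the open set $\Omega$ (as noted in the proof of Theorem \ref{da}). Next, since $E \subset \Omega$ is bounded, $\overline{E}$ is compact and hence $E \Subset \R^{n}$; moreover, because $E$ has locally finite perimeter and $\redb E \subset \overline{E}$ is contained in a fixed bounded set, $E$ is in fact a set of finite perimeter. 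Thus $\tilde{\FF}$ and $E$ satisfy the hypotheses of Theorem \ref{GGF} with the ambient open set there taken to be $\R^{n}$.

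Applying Theorem \ref{GGF} to $\tilde{\FF}\in\DM^{\infty}(\R^{n})$ and $E \Subset \R^{n}$ then gives, for every $\phi \in C_c^{1}(\R^{n})$,
\begin{equation*}
\int_{E} \tilde{\FF}\cdot\nabla\phi\,\dr x + \int_{E^{1}}\phi\,\dr\div\tilde{\FF}
 = -2\int_{\partial^{*} E}\phi\,\dr\overline{\chi_{E}\tilde{\FF}\cdot D\chi_{E}}.
\end{equation*}
I would then identify each term with its counterpart for $\FF$. Because $E \subset \Omega$ and $\tilde{\FF}=\FF$ on $\Omega$, one has $\tilde{\FF}=\FF$ $\mathcal{L}^{n}$-a.e. on $E$, so $\int_{E}\tilde{\FF}\cdot\nabla\phi\,\dr x=\int_{E}\FF\cdot\nabla\phi\,\dr x$. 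Likewise $\chi_{E}\tilde{\FF}=\chi_{E}\FF$ as global $L^{\infty}$ fields (both vanish off $E$ and coincide with $\FF$ on $E$), so the weak-$*$ limits defining the two product-rule measures agree, $\overline{\chi_{E}\tilde{\FF}\cdot D\chi_{E}}=\overline{\chi_{E}\FF\cdot D\chi_{E}}$; by the Remark following Theorem \ref{GGF} this measure equals $\tfrac12\,\mathfrak{F}_{\rm i}\cdot\nu_{E}\,\Haus{n-1}\res\redb E$ with $\mathfrak{F}_{\rm i}\cdot\nu_{E}\in L^{\infty}(\redb E;\Haus{n-1})$, which is exactly the interior normal trace and which establishes \eqref{cui3}. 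Substituting these identifications yields the asserted formula, with the divergence term read as $\int_{E^{1}}\phi\,\dr\div\tilde{\FF}$.

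The conceptual heart — and the precise place where $E \Subset \Omega$ was being used in Theorem \ref{GGF} — is the divergence term. When $E \Subset \Omega$ one has $E^{1}\subset\overline{E}\Subset\Omega$, so $\div\tilde{\FF}$ and $\div\FF$ agree on $E^{1}$ and the distinction is invisible. For general $E \subset \Omega$, however, $E^{1}\subset\Omega^{1}$ may meet the ``cracks'' $\partial\Omega\cap\Omega^{1}$, where $\div\tilde{\FF}$ can carry genuine mass (exactly as in Example \ref{classical} and Theorem \ref{da}); this is the extra boundary contribution that an \emph{up to the boundary} formula must retain. Hence the integral $\int_{E^{1}}\phi\,\dr\div\FF$ in the statement is naturally understood through the extended measure $\div\tilde{\FF}$, which is legitimate precisely because $\Omega$ is an extension domain and $\div\tilde{\FF}\res\Omega=\div\FF$. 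The remaining verifications — that $E$ has finite perimeter, that $\chi_{E}\tilde{\FF}=\chi_{E}\FF$ so the two product-rule measures coincide, and that $|E^{1}\Delta E|=0$ so the volume integrals over $E$ and $E^{1}$ agree — are routine, so I expect no serious technical obstacle once the reduction to $\R^{n}$ via Theorem \ref{ahax} is in place.
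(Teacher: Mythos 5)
Your proposal is correct and follows essentially the same route as the paper: extend $\FF$ by zero via Theorem \ref{ahax}, apply Theorem \ref{GGF} to $\tilde{\FF}$ with $E \Subset \R^{n}$, and use $\chi_{E}\tilde{\FF}=\chi_{E}\FF$ to identify the boundary measure with $2\,\overline{\chi_{E}\FF\cdot D\chi_{E}}$ and hence with the interior normal trace. You are in fact somewhat more careful than the paper's two-line proof, in particular in spelling out that the term $\int_{E^{1}}\phi\,\dr\div\FF$ must be read through $\div\tilde{\FF}$ when $E^{1}$ meets $\partial\Omega\cap\Omega^{1}$.
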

\begin{proof}
By Theorem \ref{GGF},
\begin{align*}
    \int_E \FF \cdot \nabla \phi \,\dr x +\int_{E^1} \phi \, \dr \div \FF
    =-2 \int_{\partial^* E} \phi\, \dr \overline{\chi_E \tilde{\FF} \cdot D\chi_{E}},
\end{align*}
where $\tilde{\FF}$ is as in \eqref{j1}. Since $E \subset \Omega$,
$\chi_E \tilde{\FF}=\chi_E \FF$. This implies \eqref{cui3}.
\end{proof}

\section{Solvability of the Divergence Equation \\ with Prescribed $L^\infty$ Normal Trace}

In \S\,5, we have shown that, if a bounded open set $\Omega$ satisfies \eqref{neat-2},
then, for any $\FF \in \mathcal{DM}^{\infty}(\Omega)$,
the normal trace of $\FF$ is an $L^{\infty}$ function $g$ concentrated on $\partial \Omega \setminus \Omega^0$ (see Theorem \ref{da}).
In the opposite direction, given $g \in L^{\infty}\left(\partial \Omega \setminus \Omega^0; \mathscr{H}^{n-1}\right)$,
we would like to know whether there exists $\FF \in \mathcal{DM}^{\infty}(\Omega)$ such that the normal trace of $\FF$ is $g$.
Thus, in this section, we consider the problem of solving the divergence equation with prescribed $L^{\infty}$ normal trace.
Let us first introduce the following definition.

\begin{definition}
Let $\Gamma \subset \mathbb{R}^n$.
We say that $\Gamma$ satisfies the upper $(n-1)$--Ahlfors regular condition
if there exists a constant $C>0$ such that, for any $x \in \Gamma$ and $r>0$,
\begin{align}\label{alf}
    \mathscr{H}^{n-1}\left(\Gamma \cap B_r(x)\right) \le Cr^{n-1}.
\end{align}
\end{definition}

Then we have
\begin{theorem} \label{util}
Let $\Omega$ be a bounded open set such that $\partial \Omega \setminus \Omega^0$
is $(n-1)$--Ahlfors regular.
Then, for any $g \in L^{\infty}\left(\partial \Omega \setminus \Omega^0; \mathscr{H}^{n-1}\right)$ with the compatibility condition{\rm :}
\begin{align}
    \int_{\partial \Omega \setminus \Omega^0} g \, \dr\mathscr{H}^{n-1}=0,
\end{align}
the problem of finding $\FF \in \mathcal{DM}^{\infty}(\Omega)$ such that
\begin{equation}\label{fanw}
    \begin{cases}
    \div \FF =0  \qquad \text{in } \Omega,\\[1mm]
     \ban{\FF \cdot \nu,\, \cdot}_{\partial \Omega} =g\in L^{\infty} (\partial \Omega \setminus \Omega^{0})
    \end{cases}
\end{equation}
is equivalent to the problem of finding $\FF \in \mathcal{DM}^{\infty}(\Omega)$ such that
\begin{equation}\label{fanw2}
    \begin{cases}
    \div \FF =0  \qquad\,\, \text{in } \Omega,\\[1mm]
     \ban{\FF \cdot \nu,\, \cdot}_{\partial \Omega}=h\in L^{\infty}(\partial^* \Omega) \quad \mbox{with $\int_{\partial^* \Omega} h \, \dr\mathscr{H}^{n-1}=0$}.
    \end{cases}
\end{equation}
\end{theorem}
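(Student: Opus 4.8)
\emph{Strategy.} The plan is to reduce the full prescribed-trace problem \eqref{fanw} to the reduced-boundary problem \eqref{fanw2} by isolating, and then separately realizing, the portion of the trace that lives on the interior cracks. First observe that the Ahlfors condition \eqref{alf} together with the compactness of $\partial\Omega$ forces $\mathscr{H}^{n-1}(\partial\Omega\setminus\Omega^0)<\infty$, so $\Omega$ is a bounded open set of finite perimeter and Theorem~\ref{da} applies. By Theorem~\ref{Federertheorem}, up to an $\mathscr{H}^{n-1}$-null set one has $\partial\Omega\setminus\Omega^0=(\partial\Omega\cap\Omega^1)\cup\partial^*\Omega$, so any datum $g\in L^{\infty}(\partial\Omega\setminus\Omega^0;\mathscr{H}^{n-1})$ splits into a crack part $g\res(\partial\Omega\cap\Omega^1)$ and a reduced-boundary part $g\res\partial^*\Omega$. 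The key structural input is the decomposition in Theorem~\ref{da}: for any divergence-free $\FF\in\mathcal{DM}^{\infty}(\Omega)$, the trace is the measure $\mu=-\div\tilde{\FF}\res(\partial\Omega\cap\Omega^1)-2\,\overline{\tilde{\FF}\cdot D\chi_{\Omega}}$, whose second summand is carried by $\partial^*\Omega$ with an $L^{\infty}$ density. Thus the crack part and the reduced-boundary part of a trace are genuinely independent, and it suffices to produce one explicit field realizing the crack part.

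\emph{Transfer field.} Next I would construct a single fixed \emph{transfer field} $\FF_{0}\in\mathcal{DM}^{\infty}(\Omega)$ with $\div\FF_{0}=0$ in $\Omega$ whose normal trace restricted to $\partial\Omega\cap\Omega^1$ equals $g\res(\partial\Omega\cap\Omega^1)$, its trace $k\in L^{\infty}(\partial^*\Omega)$ on the reduced boundary being left unprescribed. To do so, consider the finite measure $\sigma:=g\,\mathscr{H}^{n-1}\res(\partial\Omega\cap\Omega^1)$; the Ahlfors regularity \eqref{alf} yields $|\sigma|(B_{r}(x))\le C\,\norm{g}_{L^{\infty}}\,r^{n-1}$ for all balls, i.e. $\sigma$ has upper $(n-1)$-growth. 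This is precisely the hypothesis under which the divergence equation admits a bounded solution: balancing the total mass by a term supported on $\partial^*\Omega$ (which is itself Ahlfors regular) to form a measure $\mu_{0}$ supported on $\partial\Omega$, equal to $-\sigma$ on the cracks and of zero total mass, one solves $\div\FF_{0}=\mu_{0}$ with $\FF_{0}\in L^{\infty}(\mathbb{R}^{n};\mathbb{R}^{n})$ supported in $\overline{\Omega}$. Since $\mu_{0}$ places no mass on the open set $\Omega$, $\FF_{0}$ is divergence-free there, and because $\FF_{0}$ is supported in $\overline{\Omega}$ its zero-extension agrees with $\FF_{0}$ a.e.; hence, by Theorem~\ref{da}, the crack part of its trace is $-\div\tilde{\FF}_{0}\res(\partial\Omega\cap\Omega^1)=\sigma$, i.e. has density $g$ on the cracks, as desired.

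\emph{Bireduction.} With $\FF_{0}$ available, the equivalence is pure bookkeeping and uses only linearity of the trace in \eqref{normal trace def}. Given a solution $\FF$ of \eqref{fanw}, set $\FF':=\FF-\FF_{0}$; then $\div\FF'=0$ in $\Omega$, its crack part vanishes, and its reduced-boundary part is $h:=g\res\partial^*\Omega-k\in L^{\infty}(\partial^*\Omega)$, so $\FF'$ solves \eqref{fanw2}. Conversely, given a solution $\FF'$ of \eqref{fanw2} with the data $h$ just described, the field $\FF:=\FF'+\FF_{0}$ solves \eqref{fanw}. The compatibility constraints match automatically: testing the trace identity against a cutoff $\phi$ that is $\equiv1$ on a neighborhood of $\overline{\Omega}$ and using $\div\FF_{0}=0$ shows the total trace mass of $\FF_{0}$ is $0$, so that $\int_{\partial^*\Omega}k\,\dr\mathscr{H}^{n-1}=-\int_{\partial\Omega\cap\Omega^1}g\,\dr\mathscr{H}^{n-1}=\int_{\partial^*\Omega}g\,\dr\mathscr{H}^{n-1}$ by the hypothesis $\int_{\partial\Omega\setminus\Omega^0}g\,\dr\mathscr{H}^{n-1}=0$; consequently $\int_{\partial^*\Omega}h\,\dr\mathscr{H}^{n-1}=0$.

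\emph{Main obstacle.} The crux is the construction of $\FF_{0}$ in the second step and the verification that its crack trace is \emph{exactly} $g\res(\partial\Omega\cap\Omega^1)$ with $\norm{\FF_{0}}_{L^{\infty}}\lesssim_{n}\norm{g}_{L^{\infty}}$. Two points demand care. First, the cracks $\partial\Omega\cap\Omega^1$ need not be rectifiable, so one cannot simply write down an explicit double-layer field as in Example~\ref{classical}; instead one must invoke the solvability of $\div\FF_{0}=\mu_{0}$ driven solely by the $(n-1)$-growth of $\sigma$, which is exactly where \eqref{alf} enters essentially. Second, one must confirm that the solution can be taken supported in $\overline{\Omega}$ and that, under the decomposition of Theorem~\ref{da}, the reconstructed measure reproduces the prescribed crack density without leaking uncontrolled mass onto $\partial^*\Omega$. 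Establishing this controlled bounded solution is the heart of the argument; once it is in place, the remainder is the elementary linear reduction above.
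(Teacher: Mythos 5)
Your overall strategy --- solve a divergence equation with a measure datum of $(n-1)$-growth via the Phuc--Torres theorem, then correct by a solution of \eqref{fanw2} --- is the same as the paper's, but your construction of the transfer field $\FF_0$ has a genuine gap at exactly the point you flag as the crux. You need $\div \FF_0=\mu_0$ with $\FF_0\in L^{\infty}$ \emph{supported in $\overline{\Omega}$} in order to conclude that the zero-extension $\tilde{\FF}_0$ agrees with $\FF_0$ and hence that Theorem \ref{da} returns the prescribed crack density. The Phuc--Torres result \cite[Theorem 3.3]{PT} produces a bounded global solution on $\mathbb{R}^n$ with no support constraint, and there is no mechanism in your argument that localizes it; a support constraint on a bounded solution of a divergence equation is a strong additional requirement that would itself need proof. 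Moreover, even if one had $\supp \FF_0\subset\overline{\Omega}$, the identification $\tilde{\FF}_0=\FF_0$ a.e.\ uses $\mathscr{L}^n(\partial\Omega)=0$, which is not assumed here: the paper explicitly allows $\mathscr{L}^n(\Omega^0\cap\partial\Omega)>0$ (cf.\ the Barozzi--Gonzalez--Massari examples), so the term $\int_{\Omega^c}\FF_0\cdot\nabla\phi\,\dr x$ need not vanish even under your support hypothesis.

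The paper's proof shows how to avoid this entirely, and the fix is worth internalizing: do not ask for any support condition. Take the global bounded field $\GG$ with $\div\GG=-g\,\mathscr{H}^{n-1}\res(\partial\Omega\setminus\Omega^0)$ (the full datum, not just the crack part), observe that $\div\GG=0$ in $\Omega$ so $\ban{\GG\cdot\nu,\phi}_{\partial\Omega}=\int_{\Omega}\GG\cdot\nabla\phi\,\dr x=\int_{\partial\Omega\setminus\Omega^0}g\phi\,\dr\mathscr{H}^{n-1}-\int_{\Omega^c}\GG\cdot\nabla\phi\,\dr x$, and then identify the ``leakage'' term $\int_{\Omega^c}\GG\cdot\nabla\phi\,\dr x$ as the exterior normal trace of $\GG$, i.e.\ as $\int_{\partial^*\Omega}h\phi\,\dr\mathscr{H}^{n-1}$ for some $h\in L^{\infty}(\partial^*\Omega;\mathscr{H}^{n-1})$, via Theorem \ref{GGF} applied on the complement. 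This $h$ (shown to satisfy $\int_{\partial^*\Omega}h\,\dr\mathscr{H}^{n-1}=0$ by the cutoff argument you also use) is precisely the datum fed into problem \eqref{fanw2}, and $\FF=\GG+\hat{\FF}$ solves \eqref{fanw}. Your ``bireduction'' bookkeeping and compatibility computation are fine once the transfer field is constructed this way; as written, however, the existence of your $\FF_0$ with the stated properties is not established.
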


\begin{proof}
Clearly, if problem \eqref{fanw} is solvable, then problem \eqref{fanw2} is also solvable.
We assume now that \eqref{fanw2} is solvable.

Let $\mu:= g\, \mathscr{H}^{n-1}\res(\partial \Omega \setminus \Omega^0)$.
Since $g \in L^{\infty}$, and \eqref{alf} holds, we see that, for any $x \in \mathbb{R}^n$,
\begin{align}\label{iii}
    |\mu|(B_r(x)) \le 2^{n-1} C \norm{g}_{\infty} \, r^{n-1} \qquad \mbox{for any $r>0$}.
\end{align}
Thus, for any $\phi \in C_c^1(\mathbb{R}^n)$ with $\phi \ge 0$, we have
\begin{align*}
    \Big|\int_{\mathbb{R}^n} \phi\, \dr \mu \bigg |
    \le &\, \int_0^{\infty}|\mu|\left(\{\phi > t\}\right)\dr t\\
    \lesssim_n &\, C \norm{g}_{\infty} \int_0^{\infty} P\left(\{\phi >t\}\right) \dr t\\
    \lesssim_n &\, C \norm{g}_{\infty} \int_{\mathbb{R}^n}|\nabla \phi| \, \dr x,
    \end{align*}
where we have used \eqref{iii} and the boxing inequality for the second inequality,
and the coarea formula for the third inequality.

For any $\phi \in C_c^1(\mathbb{R}^n)$,
we may write $\phi=\phi^+-\phi^-$ to conclude as above that
\begin{align*}
     \bigg|\int_{\mathbb{R}^n} \phi\, \dr\mu \bigg | \lesssim_n &\, C \norm{g}_{\infty} \int_{\mathbb{R}^n}|\nabla \phi|\,\dr x.
\end{align*}
Thus, by Phuc-Torres \cite[Theorem 3.3]{PT}, there exists $\GG \in L^{\infty}(\mathbb{R}^n; \mathbb{R}^n)$
such that
$$
\div\, \GG =-\mu= -g \mathscr{H}^{n-1}\res(\partial \Omega \setminus \Omega^0),
$$
that is, for any $\phi\in C_c^1(\mathbb{R}^n)$,
\begin{align}\label{6.7a}
    \int_{\mathbb{R}^n} \GG \cdot \nabla \phi\,\dr x
    =\int_{\partial \Omega \setminus \Omega^0} g\, \phi\, \dr\mathscr{H}^{n-1}.
\end{align}
From \eqref{6.7a}, we have
\begin{align}\label{6.8a}
\int_{\Omega} \GG \cdot \nabla \phi\, \dr x + \int_{\Omega^c} \GG \cdot \nabla \phi\, \dr x
=\int_{\partial \Omega \setminus \Omega^0} g \phi\, \dr \mathscr{H}^{n-1}
\end{align}
so that
\begin{align*}
    \ban{\GG \cdot \nu, \phi}_{\partial \Omega}
    =\int_{\Omega} \GG \cdot \nabla \phi\, \dr x
     +\int_{\Omega} \phi  \, \dr \div\,\GG =\int_{\Omega} \GG \cdot \nabla \phi\, \dr x,
\end{align*}
since $|\div\,G|(\Omega)=0$.
Thus, from \eqref{6.8a}, we conclude
\begin{align}\label{yuanx}
    \ban{\GG \cdot \nu, \phi}_{\partial \Omega}
    =\int_{\partial \Omega \setminus \Omega^0} g \phi\, \dr \mathscr{H}^{n-1}
     -\int_{\Omega^c} \GG \cdot \nabla \phi\, \dr x.
\end{align}

From Theorem \ref{GGF},
it follows that $\GG$ has an exterior normal trace $h \in L^{\infty}(\partial^* \Omega; \mathscr{H}^{n-1})$
such that, for any $\phi\in C_c^1(\mathbb{R}^n)$,
\begin{align}\label{6.11}
    \int_{\Omega^0} \GG \cdot \nabla \phi\, \dr x
    +\int_{\Omega^0} \phi  \, \dr \div\,\GG
    =\int_{\partial^* \Omega} h\, \phi \, \dr\mathscr{H}^{n-1}(y).
\end{align}
Since $|\div\,\GG|(\Omega^0)=0$ and
$\mathscr{L}^n(\Omega^c\setminus \Omega^0)=\mathscr{L}^n(\partial\Omega\setminus\Omega^0)=0$,
we conclude that \eqref{6.11} reduces to
\begin{align}\label{cy2}
    \int_{\Omega^c} \GG \cdot \nabla \phi\, \dr x
    =\int_{\partial^* \Omega} h\, \phi  \, \dr\mathscr{H}^{n-1}.
\end{align}

We define $\tilde{\GG}:=\GG \chi_B$, where $B$ is a large ball such that $\Omega\Subset B$. From Theorem 2.3, we have
$$
\div\,\tilde{\GG}=\chi_B^* \, \div\,\GG +\overline{\GG\cdot D\chi_B},
$$
where $\overline{\GG\cdot D\chi_B}$ is concentrated on $\partial B$ and $\chi^*_B\equiv 1$ on $B$.
Formulas \eqref{6.11}--\eqref{cy2} also hold for $\tilde{\GG}$.
Thus, for any $\phi\in C_c^1(\mathbb{R}^n)$,
\begin{align}\label{6.12a}
    \int_{\Omega^c} \tilde{\GG} \cdot \nabla \phi\, \dr x
    =\int_{\partial^* \Omega} h\,\phi  \, \dr\mathscr{H}^{n-1}.
\end{align}
Since $\tilde{\GG}\equiv 0$ outside $B$, we can choose a test function $\phi\in C_c^1(\mathbb{R}^n)$
with $\phi\equiv 1$ on $B$ for \eqref{6.12a} to obtain
$$
\int_{\partial^* \Omega} h \, \dr\mathscr{H}^{n-1}=0.
$$

This compatibility condition and our assumption that problem (6.4) is solvable imply
the existence of a vector field $\hat{\FF}$ such that $\div\,\hat{\FF}=0$ in $\Omega$ and
\begin{align}\label{6.13a}
    \langle \hat{\FF} \cdot \nu, \phi\rangle_{\partial \Omega}
    =\int_{\partial^*\Omega} h\, \phi\, \dr \mathscr{H}^{n-1}
    \qquad \mbox{for every $\phi\in C_c^1(\mathbb{R}^n)$}.
\end{align}

We now define
$\FF:=\GG + \hat{\FF}$,
and note that $\FF$ is a solution of \eqref{fanw}.
Indeed, it is clear that
$\div \FF=\div\,\GG + \div \hat{\FF}=0$ in $\Omega$.
Moreover, for any $\phi\in C_c^1(\mathbb{R}^n)$,
\begin{align*}
\langle \FF \cdot \nu, \phi\rangle_{\partial \Omega}
= & \int_{\Omega} \phi \, \dr \div\, \GG
 + \int_{\Omega} \GG\cdot \nabla \phi \, \dr x
 + \int_{\Omega} \phi \, \dr \div \hat{\FF}
  + \int_{\Omega} \hat{\FF} \cdot \nabla \phi \, \dr x\\
=\,& \langle\GG\cdot \nu, \phi\rangle_{\partial \Omega} + \langle \hat{\FF} \cdot \nu, \phi\rangle_{\partial \Omega}\\
=& \int_{\partial \Omega \setminus \Omega^0} g \phi\, \dr\mathscr{H}^{n-1}
 -\int_{\Omega^c} \GG \cdot \nabla \phi\, \dr x
   + \langle \hat{\FF} \cdot \nu, \phi\rangle_{\partial \Omega}\\
=&  \int_{\partial \Omega \setminus \Omega^0} g \phi\, \dr\mathscr{H}^{n-1}
  - \int_{\partial^* \Omega} \phi h \, \dr\mathscr{H}^{n-1}
    + \langle \hat{\FF} \cdot \nu, \phi\rangle_{\partial \Omega}\\
=& \int_{\partial \Omega \setminus \Omega^0} g \phi\, \dr\mathscr{H}^{n-1}
  - \int_{\partial^* \Omega} \phi h \, \dr\mathscr{H}^{n-1}
   + \int_{\partial^* \Omega} \phi h \, \dr\mathscr{H}^{n-1}\\
=& \int_{\partial \Omega \setminus \Omega^0} g \phi\, \dr\mathscr{H}^{n-1},
\end{align*}
where we have used \eqref{yuanx} for the third equality,
\eqref{cy2} for the fourth equality, \eqref{6.13a} for the fifth equality,
as well as the fact that  $\hat{\FF}$ solves \eqref{fanw2}.
This shows that the distributional normal trace
$\ban{\FF \cdot \nu,\, \cdot}_{\partial \Omega}=g\in L^\infty(\partial\Omega \setminus\Omega^0)$.
Therefore, $\FF$ solves (6.3).

\begin{remark}
Theorem {\rm \ref{util}} can be useful for the problems whose domains have interior fractures as it is the case
of the two-dimensional example $\Omega:= \{x: |x|<1,\,\,\mbox{$x_2 \neq 0$ when $x_1>0$}\}$
discussed in the introduction.
Given any data trace:
$$
g \in L^{\infty}(\{|x|=1\} \cup \{0<x_1<1,\, x_2=0\}),
$$
then the solution of \eqref{fanw} can be found, provided that we know how \eqref{fanw2} can be solved,
which has a simpler geometry since $\partial^* \Omega = \{|x|=1\}$ is just the unit circle.
\end{remark}
\end{proof}

\section{Applications and Remarks Related to Traces and Extension Domains for Bounded BV Functions and Divergence-Measure Fields}
In this section, we analyze extension domains for bounded $BV$ functions and show that \eqref{neat-2}
is a sufficient (but not a necessary)
condition for $\Omega$ to be an extension domain for bounded $BV$ functions. We also give some remarks on the traces and extension
domains for bounded BV functions and divergence-measure fields.

\subsection{Extension domains for bounded BV functions}
We can similarly define the extension domain for bounded BV functions.

\begin{definition}\label{zizao}
We say that an open set $\Omega$ is an extension domain for bounded BV functions if, for any $u \in BV(\Omega) \cap L^{\infty}(\Omega)$,
the corresponding function $\tilde{u}$,
defined as $u$ inside $\Omega$ and zero otherwise, also belongs to $BV(\mathbb{R}^n)$.
\end{definition}

Since divergence-measure fields are a generalization of $BV$ vector fields,
the following corollary is direct from Theorem \ref{ahax}:
\begin{corollary}\label{exbv}
Let $\Omega$ be an open set satisfying \eqref{neat-2}.
Then $\Omega$ is an extension domain for bounded BV functions. In particular,
\begin{align} \label{criteria}
\mbox{$P(E)<\infty$ $\qquad$ for any $E \subset \Omega$ with $P(E;\Omega)<\infty$. }
\end{align}
\end{corollary}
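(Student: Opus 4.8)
The plan is to deduce this directly from Theorem \ref{ahax} by encoding each partial derivative of a bounded $BV$ function as the divergence of a bounded divergence-measure field. Given $u \in BV(\Omega) \cap L^{\infty}(\Omega)$, I would fix an index $i \in \{1, \dots, n\}$ and consider the vector field $\FF_i := u\, e_i$, where $e_i$ is the $i$-th standard basis vector. Since $u \in L^{\infty}(\Omega)$, we have $\FF_i \in L^{\infty}(\Omega; \mathbb{R}^n)$, and since $u \in BV(\Omega)$, its distributional divergence $\div \FF_i = \partial_{x_i} u$ is a finite Radon measure on $\Omega$; hence $\FF_i \in \mathcal{DM}^{\infty}(\Omega)$.

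Because $\Omega$ satisfies \eqref{neat-2}, Theorem \ref{ahax} guarantees that $\Omega$ is an extension domain for bounded divergence-measure fields, so the zero extension $\tilde{\FF_i}$ (as in \eqref{j1}) satisfies $|\div \tilde{\FF_i}|(\mathbb{R}^n) < \infty$. The key observation is that $\tilde{\FF_i} = \tilde{u}\, e_i$, where $\tilde{u}$ is the extension of $u$ by zero, and consequently $\div \tilde{\FF_i} = \partial_{x_i} \tilde{u}$ in the distributional sense on $\mathbb{R}^n$. Running this over $i = 1, \dots, n$ shows that every distributional partial derivative $\partial_{x_i} \tilde{u}$ is a finite Radon measure, that is, $D\tilde{u}$ is a finite vector-valued Radon measure. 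Since $\Omega$ is bounded and $u \in L^{\infty}(\Omega)$, we also have $\tilde{u} \in L^1(\mathbb{R}^n)$, and therefore $\tilde{u} \in BV(\mathbb{R}^n)$. This proves that $\Omega$ is an extension domain for bounded $BV$ functions.

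For the assertion \eqref{criteria}, I would apply the above to $u = \chi_E$ for a set $E \subset \Omega$ with $P(E;\Omega) < \infty$. The condition $P(E;\Omega) < \infty$ is precisely the statement that $\chi_E \in BV(\Omega)$, while $\chi_E \in L^{\infty}(\Omega)$ holds trivially. Since $E \subset \Omega$, the zero extension $\widetilde{\chi_E}$ coincides with $\chi_E$ on all of $\mathbb{R}^n$, so the extension result yields $\chi_E \in BV(\mathbb{R}^n)$, which is exactly the assertion $P(E) = |D\chi_E|(\mathbb{R}^n) < \infty$.

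The argument is essentially a reduction, so I do not anticipate a serious obstacle. The only point requiring care is verifying that the zero-extension operation intertwines correctly with the divergence, namely that extending $u\, e_i$ by zero and then taking the distributional divergence produces the same measure as extending $u$ by zero and then taking $\partial_{x_i}$. This is immediate from the definition of the distributional divergence, but it should be recorded explicitly, since it is precisely the bridge that transfers the finiteness of $|\div \tilde{\FF_i}|(\mathbb{R}^n)$ into the finiteness of $|D\tilde{u}|(\mathbb{R}^n)$.
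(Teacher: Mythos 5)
Your argument is correct, and for the main assertion it is exactly what the paper intends: the paper simply declares the corollary ``direct from Theorem \ref{ahax}'', and your reduction via the fields $\FF_i=u\,e_i$, together with the observation that zero-extension commutes with taking $\partial_{x_i}$ in the sense of distributions, is the natural (and the right) way to make that one-line claim precise. The only genuine divergence is in how \eqref{criteria} is obtained. You deduce it from the extension property by taking $u=\chi_E$, which is perfectly valid since the zero extension of $\chi_E$ is $\chi_E$ itself when $E\subset\Omega$. The paper instead gives an independent, more elementary argument: it decomposes $\partial^m E\subset(\partial^m E\cap\Omega)\cup(\partial^m E\cap\partial\Omega\cap\Omega^1)\cup\partial^m\Omega$, bounds $\mathscr{H}^{n-1}(\partial^m E)$ by $P(E;\Omega)+\mathscr{H}^{n-1}(\partial\Omega\setminus\Omega^0)$, and invokes Federer's criterion (Theorem \ref{criteria'}). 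That route buys two things your reduction does not: it shows that \eqref{neat-2} implies \eqref{criteria} without passing through the machinery of Theorem \ref{ahax} (whose proof is itself nontrivial), and it gives a quantitative bound on $P(E)$ in terms of $P(E;\Omega)$ and $\mathscr{H}^{n-1}(\partial\Omega\setminus\Omega^0)$. Your version is shorter but inherits the full weight of the extension theorem; both are correct.
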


In fact, \eqref{neat-2} can directly imply \eqref{criteria}.
Indeed, let $E \subset \Omega$ be a set of finite perimeter in $\Omega$,
and let $\partial^m E$ denote the measure-theoretic boundary of $E$.
Since $E \subset \Omega$, we have $\partial^m E \cap \Omega^0= \emptyset$.
Thus, using
$$
\mathbb{R}^n=\Omega^1 \cup \partial^m \Omega \cup \Omega^0,\qquad
\Omega^1=\Omega \cup \left(\Omega^1 \cap \partial \Omega\right),
$$
we have
\begin{align*}
\mathscr{H}^{n-1}(\partial^mE)
=&\,\mathscr{H}^{n-1}(\partial^mE \cap \Omega^1)
  +\mathscr{H}^{n-1}(\partial^mE \cap \partial^m \Omega) \\
\le&\,\mathscr{H}^{n-1}(\partial^mE \cap \Omega)+\mathscr{H}^{n-1}(\partial^mE \cap \partial \Omega \cap \Omega^1)+P(\Omega)\\
 = &\, P(E;\Omega)+\mathscr{H}^{n-1}(\partial \Omega \setminus \Omega^0)<\infty.
\end{align*}
By Theorem \ref{criteria'}, $E$ is a set of finite perimeter in $\mathbb{R}^n$.

\medskip
Then a natural followup question is whether
 \eqref{neat-2} is equivalent to \eqref{criteria}.
In the rest of this subsection, we answer this question negatively,
by giving an example showing that there exists an open set $\Omega$ with \eqref{criteria},
but $\mathscr{H}^{n-1}(\partial \Omega \setminus \Omega^0)=\infty$.

We first introduce the so-called Sobolev extension domain.
\begin{definition}
We say that $\Omega$ is a Sobolev extension domain if, for any $u \in W^{1,p}(\Omega)$,
there is a bounded operator $E\,{\rm :} \,  W^{1,p}(\Omega) \rightarrow W^{1,p}(\mathbb{R}^n)$
and a constant $C(\Omega)>0$ such that
\begin{align*}
    Eu(x)=    u(x) \qquad \mbox{for all $x \in \Omega$},
\end{align*}
and
$$
\|Eu\|_{W^{1,p}(\mathbb{R}^n)} \le C(\Omega)\|u\|_{W^{1,p}(\Omega)}.
$$
\end{definition}

The Sobolev extension domains include
Lipschitz domains,
but can be much more general.
By \cite{GO,Jo},
a uniform domain is a Sobolev extension domain.
The uniform domains can have purely un-rectifiable boundary;
for example, the complement of $4$-corner Cantor set in a ball.
See also Definition \ref{uniformdomain} and Example \ref{aha6}
below for the definition and a concrete example of uniform domains.

The next proposition says that a Sobolev extension domain must be an extension domain for bounded BV functions.
\begin{proposition}\label{haof}
Let $\Omega$ be a Sobolev extension domain of finite perimeter.
Then $\Omega$ is an extension domain for bounded BV functions.
\end{proposition}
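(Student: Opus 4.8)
The plan is to bridge from $BV$ to the Sobolev class, where the extension operator is available, and then to recover the \emph{zero} extension by multiplying by $\chi_\Omega$, using that $\Omega$ has finite perimeter. Fix $u\in BV(\Omega)\cap L^\infty(\Omega)$ and set $M:=\|u\|_{L^\infty(\Omega)}$. First I would reduce to the Sobolev setting by strict approximation: by the standard smooth approximation of $BV$ functions, followed by truncation at level $\pm M$ (which does not increase the total variation and preserves $L^1$-convergence), I can choose $u_j\in C^\infty(\Omega)\cap W^{1,1}(\Omega)$ with $\|u_j\|_{L^\infty(\Omega)}\le M$, $u_j\to u$ in $L^1(\Omega)$, and $|Du_j|(\Omega)\to |Du|(\Omega)$; in particular $\sup_j\|u_j\|_{W^{1,1}(\Omega)}<\infty$.

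Next I would extend and control the extensions. Applying the Sobolev extension property with $p=1$, let $\bar u_j:=Eu_j\in W^{1,1}(\mathbb{R}^n)$, so that $\|\bar u_j\|_{W^{1,1}(\mathbb{R}^n)}\le C(\Omega)\,\|u_j\|_{W^{1,1}(\Omega)}$ is bounded uniformly in $j$. Since the extension need not respect the bound $|\bar u_j|\le M$, I truncate once more: set $\bar u_j^M:=\max(-M,\min(M,\bar u_j))$. Truncation keeps $\bar u_j^M\in W^{1,1}(\mathbb{R}^n)$ without increasing its norm, gives $\|\bar u_j^M\|_{L^\infty(\mathbb{R}^n)}\le M$, and leaves the values on $\Omega$ unchanged (because $|u_j|\le M$ there), so $\bar u_j^M=u_j$ a.e.\ on $\Omega$. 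Thus $\bar u_j^M\in W^{1,1}(\mathbb{R}^n)\cap L^\infty(\mathbb{R}^n)$ with $\sup_j\|\bar u_j^M\|_{W^{1,1}(\mathbb{R}^n)}<\infty$ and $\sup_j\|\bar u_j^M\|_{L^\infty(\mathbb{R}^n)}\le M$.

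Then I would pass to the zero extension by multiplying by $\chi_\Omega$. Since $\Omega$ has finite perimeter, $\chi_\Omega\in BV(\mathbb{R}^n)$, and the product rule for a function in $W^{1,1}(\mathbb{R}^n)\cap L^\infty(\mathbb{R}^n)$ times $\chi_\Omega$ shows that $\bar u_j^M\chi_\Omega\in BV(\mathbb{R}^n)$ with
\[
|D(\bar u_j^M\chi_\Omega)|(\mathbb{R}^n)\le \int_\Omega|\nabla \bar u_j^M|\,\dr x+\|\bar u_j^M\|_{L^\infty(\mathbb{R}^n)}\,P(\Omega),
\]
the boundary term arising from the $L^1$ trace of $\bar u_j^M$ on $\partial^*\Omega$, which is bounded by $\|\bar u_j^M\|_{L^\infty(\mathbb{R}^n)}$. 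Because $\bar u_j^M=u_j$ a.e.\ on $\Omega$, we have $\bar u_j^M\chi_\Omega=\widetilde{u_j}$, the zero extension of $u_j$, and the right-hand side is bounded uniformly in $j$ by $\sup_j\|\bar u_j^M\|_{W^{1,1}(\mathbb{R}^n)}+M\,P(\Omega)<\infty$. Since $u_j\to u$ in $L^1(\Omega)$ implies $\widetilde{u_j}\to\tilde u$ in $L^1(\mathbb{R}^n)$, the lower semicontinuity of the total variation gives $|D\tilde u|(\mathbb{R}^n)\le\liminf_{j}|D\widetilde{u_j}|(\mathbb{R}^n)<\infty$; together with $\tilde u\in L^1(\mathbb{R}^n)$ this yields $\tilde u\in BV(\mathbb{R}^n)$, which is exactly the extension property of Definition \ref{zizao}.

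The main obstacle is the mismatch between the two notions of extension: the Sobolev property only produces \emph{some} $W^{1,p}$ extension of a $W^{1,p}$ function, whereas Definition \ref{zizao} demands that the \emph{specific} zero extension of a bounded $BV$ function lie in $BV(\mathbb{R}^n)$. The crux of the argument is therefore the two truncations, which keep the extensions bounded by $M$, combined with the multiplication by $\chi_\Omega$: this last step is precisely where the finite-perimeter hypothesis enters, converting an arbitrary Sobolev extension into the zero extension while keeping the total variation under control.
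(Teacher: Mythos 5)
Your proof is correct and follows essentially the same route as the paper: smooth approximation of $u$ in $\Omega$, extension by the Sobolev extension operator with $p=1$, and control of the total variation of the zero extension by $\int_\Omega|\nabla\cdot|\,\dr x+\|u\|_{L^\infty}P(\Omega)$ --- your BV product rule with $\chi_\Omega$ is the same fact as the paper's Gauss--Green formula on the finite-perimeter set $\Omega$ applied to the approximants. Your explicit truncation of the extended functions at level $\pm M$ is a welcome added precision, since the paper only asserts that the global approximants can be chosen with $\|w_j\|_\infty\le\|u\|_\infty$ without detailing that step.
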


\begin{proof}
Let $u \in BV(\Omega)$.
By \cite[Theorem 3.9]{afp},
there exist $u_j \in C^{\infty}(\Omega), j=1,2, \cdots$,
such that $u_j \rightarrow u$ in $L^1(\Omega)$ and $|Du_j|(\Omega) \rightarrow |Du|(\Omega)$.
We may also assume that $\|u_j\|_{\infty} \le \|u\|_{\infty}$
and $|Du_j|(\Omega) \le 2|Du|(\Omega)$.
We extend $u_j \in W^{1,1}(\Omega)$ outside as $\bar{u}_j$, with
\begin{align*}
    \|\bar{u}_j\|_{W^{1,1}(\mathbb{R}^n)} \le C \|u_j\|_{W^{1,1}(\Omega)}
    =  C \|u_j\|_{BV(\Omega)}
    \le 2C\|u\|_{BV(\Omega)},
\end{align*}
where $C$ is the constant in the definition of Sobolev extension domains.

By the standard mollification,
we can actually choose $w_j \in C_c^{\infty}(\mathbb{R}^n)$
with $\|w_j\|_{\infty} \le \|u\|_{\infty}$
such that $w_j \rightarrow u$ in $L^1(\Omega)$ and $|Dw_j|(\mathbb{R}^n) \le C |Du|(\Omega)$.
Thus, for any $\phi \in C_c^1(\mathbb{R}^n, \mathbb{R}^n)$ with $|\phi| \le 1$,
by the classical divergence theorem on sets of finite perimeter, we have
\begin{align*}
   -\int_{\Omega} \tilde{u}\, \div\,\phi\,\dr x  =  -\int_{\Omega} u\, \div\,\phi\,\dr x
   = &- \lim_{j \rightarrow \infty} \int_{\Omega} w_j\, \div \phi\,\dr x\\
    =& \lim_{j \rightarrow \infty} \Big(\int_{\Omega} Dw_j \cdot \phi\,\dr x
       +\int_{\partial ^* \Omega} w_j\, \phi \cdot \nu_{\Omega}\,\dr x \Big)\\[1mm]
    \le & \limsup_{j \rightarrow \infty} \big(|Dw_j|(\Omega)+\|w_j\|_{\infty} P(\Omega)\big)< \infty.
\end{align*}
This completes the proof.
\end{proof}

In order to construct an example to answer the question negatively,
we consider the following natural class of Sobolev extension domains,
the so-called $M$--uniform domains.
Recall the following equivalent definition of $M$--uniform domains,
which was first introduced in \cite{GO} and \cite{Jo}.
\begin{definition}\label{uniformdomain}
Let $M>1$. We say that $\Omega $ is an $M$--uniform domain if, for any $x_1,x_2 \in \overline{\Omega}$,
there is a rectifiable curve $\gamma\,:\, [0,1] \rightarrow \overline{\Omega}$ with $\gamma(0)=x_1$ and $\gamma(1)=x_2$
such that
\begin{enumerate}
\item[\rm (i)]\label{Jones}
$\mathscr{H}^1(\gamma) \le M|x_1-x_2|$,

\smallskip
\item[\rm (ii)]
$d(\gamma(t), \partial \Omega) \ge \frac{1}{M} \min\{|\gamma(t)-x_1|,|\gamma(t)-x_2|\}$ $\quad$ for all $t \in [0,1]$.
\end{enumerate}
\end{definition}
It was proved in \cite{Jo} that, for an $M$--uniform domain,
constant $C$ in the definition of Sobolev extension domains depends only on $M$ and $n$.

Then the next example answers the question negatively.
\begin{example}\label{aha6}
Let $S$ be the classical Cantor ternary set defined in the closed interval $[0,1]$, by removing the middle thirds of the remaining interval in each step.
Let $\Omega=B_2\left((0,0)\right) \setminus (S\times S)$.
We observe that $\mathscr{H}^1(S \times S)=\infty$ and $|S\times S|=0$.
Since $\Omega$ is $\mathscr{H}^n$ equivalent to $B_2$, then $\Omega$ is a set of finite perimeter.
It is well known that $\Omega$ is a uniform domain so that, by Proposition {\rm \ref{haof}},
$\Omega$ is an extension domain for bounded BV functions satisfying \eqref{criteria}.
However, it is easy to check that $S \times S \subset \partial \Omega \cap \Omega^1$ so that
\begin{align*}
    \mathscr{H}^1(\partial \Omega \setminus \Omega^0) \ge \mathscr{H}^{1}(S\times S)=\infty.
\end{align*}
\end{example}

\smallskip
\subsection{Traces for bounded BV functions and Sobolev functions on extension domains}
For $u \in BV(\Omega)$, we can similarly define trace $Tu$ of $u$ in the sense of distributions:
\begin{align*}
    Tu(\phi):= \int_{\Omega} \phi \cdot \dr Du+\int_{\Omega} u\, \div\,\phi\, \dr x.
\end{align*}

From Example \ref{classical}, we know that even the trace of a bounded $BV$ vector field
is not necessarily concentrated on the reduced boundary of its domain.
However, if $\Omega$ satisfies
\begin{align}\label{cy3}
    \mathscr{H}^{n-1}(\Omega^1 \cap \partial \Omega)=0,
\end{align}
then the trace of a bounded $BV$ function $u$ is a function on $\partial^*\Omega$.

\begin{proposition}\label{bvtrace}
Let $u \in BV(\Omega) \cap L^{\infty}(\Omega)$.
If $\Omega$ is an open set of finite perimeter satisfying \eqref{cy3},
then there exists $u^* \in L^{\infty}(\partial^* \Omega)$ such that,
for any $\phi \in C_c^{\infty}(\mathbb{R}^n,\mathbb{R}^n)$,
the following integration by part formula holds{\rm :}
\begin{align}\label{cy4}
    \int_{\Omega} \phi \cdot \dr Du+\int_{\Omega} u\, \div\,\phi\,\dr x
    =-\int_{\partial^* \Omega} u^* \phi \cdot \nu_{\Omega}\, \dr\mathscr{H}^{n-1}.
\end{align}
In addition,
\begin{align}
\label{cy5}
\lim_{r \rightarrow 0} \frac{\int_{B_r(x) \cap \Omega}|u(y)-u^*(x)|\,\dr y}{r^n}=0 \qquad\,\, a.e. \,\,  x \in \partial^* \Omega.
\end{align}
\end{proposition}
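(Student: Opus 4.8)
The plan is to reduce everything to the fine structure of the zero-extension $\tilde u$, viewed as a function in $BV(\mathbb{R}^n)$. First I would observe that \eqref{cy3} together with the finite perimeter of $\Omega$ forces \eqref{neat-2}: since $\partial\Omega\setminus\Omega^0 = (\partial\Omega\cap\Omega^1)\cup(\partial\Omega\cap\partial^m\Omega)$, the first set is $\mathscr{H}^{n-1}$-null by \eqref{cy3}, while $\mathscr{H}^{n-1}(\partial^m\Omega) = \mathscr{H}^{n-1}(\partial^*\Omega)=P(\Omega)<\infty$ by Theorem \ref{Federertheorem}. Hence Corollary \ref{exbv} applies, $\tilde u\in BV(\mathbb{R}^n)\cap L^\infty(\mathbb{R}^n)$, and $D\tilde u$ is a finite $\mathbb{R}^n$-valued Radon measure with $|D\tilde u|\ll\mathscr{H}^{n-1}$.

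Next I would pin down where $D\tilde u$ lives. Since $\tilde u=0$ on $\Omega^c$ and $\Omega\Subset\mathbb{R}^n$ is bounded of finite perimeter, the concentration result recalled in \eqref{cy7} (with $f=\tilde u$, $E=\Omega$, ambient $\mathbb{R}^n$) gives $D\tilde u = D\tilde u\res(\Omega^1\cup\partial^*\Omega)$, that is $|D\tilde u|(\Omega^0)=0$. Because $\Omega$ is open, $\Omega\subset\Omega^1$ and $\Omega^1\setminus\Omega\subset\Omega^1\cap\partial\Omega$, which is $\mathscr{H}^{n-1}$-null by \eqref{cy3}; as $|D\tilde u|\ll\mathscr{H}^{n-1}$, this yields $D\tilde u\res\Omega^1 = D\tilde u\res\Omega = Du$. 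Discarding the $|D\tilde u|$-null set $\partial^m\Omega\setminus\partial^*\Omega$ via Theorem \ref{Federertheorem}, I obtain
\[
D\tilde u = Du + D\tilde u\res\partial^*\Omega .
\]
Since $\mathscr{H}^{n-1}(\partial^*\Omega)=P(\Omega)<\infty$, the absolutely continuous and Cantor parts of $D\tilde u$ do not charge $\partial^*\Omega$, so $D\tilde u\res\partial^*\Omega$ is purely the jump part of $\tilde u$ restricted to $\partial^*\Omega$.

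The heart of the argument is to identify this jump part as $u^*\nu_\Omega\,\mathscr{H}^{n-1}\res\partial^*\Omega$ for a bounded trace $u^*$, and to prove \eqref{cy5}. Here I would invoke the fine structure theorem for $BV$ functions: $\mathscr{H}^{n-1}$-a.e.\ point of $\mathbb{R}^n$ is either a point of approximate continuity or an approximate jump point of $\tilde u$. Fixing $x\in\partial^*\Omega$ where De Giorgi's blow-up holds, $(\Omega-x)/r\to H=\{y:y\cdot\nu_\Omega(x)\ge 0\}$ in $L^1_{\rm loc}$; since $\tilde u$ vanishes on $\Omega^c$ and $|B_r(x)\cap\Omega\cap H^c|=o(r^n)$, the one-sided average of $\tilde u$ on the $-\nu_\Omega$ half-ball tends to $0$, so the exterior trace is $0$. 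Whether $x$ is a jump or a continuity point, this forces a well-defined interior one-sided approximate limit $u^*(x):=\tilde u^+(x)$ with $|u^*(x)|\le\|u\|_{L^\infty}$ and the half-ball convergence $r^{-n}\int_{B_r^+(x)}|\tilde u-u^*(x)|\,\dr y\to 0$, where $B_r^+(x)=\{y\in B_r(x):(y-x)\cdot\nu_\Omega(x)>0\}$; moreover the jump, when nontrivial, necessarily aligns with $\nu_\Omega(x)$, giving $\tilde u^+=u^*$, $\tilde u^-=0$. Replacing $B_r^+(x)$ by $B_r(x)\cap\Omega$ costs only $2\|u\|_{L^\infty}\,r^{-n}|(B_r(x)\cap\Omega)\triangle B_r^+(x)|=o(1)$ by the blow-up, which is exactly \eqref{cy5}, and the jump part becomes $u^*\nu_\Omega\,\mathscr{H}^{n-1}\res\partial^*\Omega$. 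Feeding $D\tilde u = Du + u^*\nu_\Omega\,\mathscr{H}^{n-1}\res\partial^*\Omega$ into $-\int_\Omega u\,\div\phi\,\dr x = \int_{\mathbb{R}^n}\phi\cdot \dr D\tilde u$ and rearranging then yields \eqref{cy4}. I expect the main obstacle to be the rigorous construction of the interior trace and the blow-up passage from $B_r^+(x)$ to $B_r(x)\cap\Omega$ at $\mathscr{H}^{n-1}$-a.e.\ reduced boundary point; the remaining measure-theoretic bookkeeping (the extension, $|D\tilde u|(\Omega^0)=0$, and discarding the $\mathscr{H}^{n-1}$-null pieces) is routine once the fine structure input is in place.
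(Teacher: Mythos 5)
Your proposal is correct and follows essentially the same route as the paper: extend $u$ by zero (justified, as you note, because \eqref{cy3} plus finite perimeter yields \eqref{neat-2} and hence Corollary \ref{exbv}), decompose $D\tilde u$ into $Du$ plus a trace term $u^*\nu_\Omega\,\mathscr{H}^{n-1}\res\partial^*\Omega$ using \eqref{cy3} and $|D\tilde u|\ll\mathscr{H}^{n-1}$, and integrate by parts. The only difference is that the paper obtains the boundary decomposition and \eqref{cy5} by directly citing \cite[Theorem 3.84]{afp}, whereas you re-derive that structure result by hand via the Federer--Vol'pert fine-structure theorem and the blow-up at reduced boundary points; your unpacking is accurate but is a proof of the cited theorem rather than a new argument.
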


\begin{proof}
Let \begin{align*}
    \tilde{u}(x)=\begin{cases}
    u(x) \quad &\mbox{for $x \in \Omega$},\\[1mm]
    0& \mbox{for $x \in \Omega^c$}.
    \end{cases}
\end{align*}
By Corollary \ref{exbv}, $\tilde{u} \in BV(\mathbb{R}^n)$. By \cite[Theorem 3.84]{afp},
\begin{align}
    D\tilde{u}=D\tilde{u} \res{\Omega^1}+u^* \nu_{\Omega} \mathscr{H}^{n-1}\res{\partial^* \Omega},
\end{align}
where $u^*$ satisfies \eqref{cy5}.
Using \eqref{cy3} and $Du \ll \mathscr{H}^{n-1}$, we have
\begin{align*}
    D\tilde{u}=&\,D\tilde{u} \res{\Omega}+u^* \nu_{\Omega} \mathscr{H}^{n-1}\res{\partial^* \Omega}
    =Du \res{\Omega}+u^* \nu_{\Omega} \mathscr{H}^{n-1}\res{\partial^* \Omega}.
\end{align*}
Then, for any $\phi \in C_c^1(\mathbb{R}^n;\mathbb{R}^n)$,
\begin{align*}
   \int_{\Omega} u\, \div\,\phi\,\dr x
   =\int_{\mathbb{R}^n} \tilde{u}\, \div\,\phi\, \dr x
   =- \int_{\mathbb{R}^n} \phi \cdot \dr D\tilde{u}=-\int_{\Omega} \phi \cdot \dr Du
    - \int_{\partial^* \Omega} u \phi \cdot \nu_{\Omega} \, \dr\mathscr{H}^{n-1}.
\end{align*}
This completes the proof.
\end{proof}
Even though Proposition \ref{bvtrace} above
is an immediate consequence of Corollary \ref{exbv}
and the standard results for BV functions,
to our knowledge, it has not known in the literature yet
since \eqref{neat-2} as a sufficient condition for extension domains
for bounded BV functions was unknown before.

The next remark says that the trace of $W^{1,1}$ functions is defined on the reduced boundary
of any Sobolev extension domain, which do not necessarily satisfy \eqref{cy3}.

\begin{remark}\label{sobtrace}
If $\Omega$ is a Sobolev extension domain and $u \in W^{1,1}(\Omega)$,
then there exists $u^*$ defined $\mathscr{H}^{n-1}$--a.e. $x \in \partial ^* \Omega$ such that,
for any $\phi \in C_c^{1}(\mathbb{R}^n,\mathbb{R}^n)$,
the following integration by parts formula holds{\rm :}
\begin{align}\label{cy4'}
    \int_{\Omega} \phi \cdot \dr Du +\int_{\Omega} u\, \div\,\phi\,\dr y
    =-\int_{\partial^* \Omega} u^* \phi \cdot \nu_{\Omega}\, \dr\mathscr{H}^{n-1}.
\end{align}
In addition,
\begin{align}\label{cy5'}
\lim_{r \rightarrow 0} \frac{\int_{B_r(x) \cap \Omega}|u(y)-u^*(x)|\, \dr y}{r^n}=0 \qquad \mathscr{H}^{n-1}-a.e. \,\,  x \in \partial^*\Omega.
\end{align}
\end{remark}

\begin{proof}
Since $\Omega$ is a Sobolev extension domain, then there exists $E{\rm :} \, W^{1,1}(\Omega) \rightarrow W^{1,1}(\mathbb{R}^n)$
such that $Eu=u$ in $\Omega$. Let
\begin{align*}
    \tilde{u}(x)=Eu \, \chi_{\Omega}.
\end{align*}
Again, by \cite[Theorem 3.84]{afp}, we have
\begin{align*}
    D\tilde{u}=&\,D(Eu) \res{\Omega^1}+u\, \nu_{\Omega} \mathscr{H}^{n-1}\res{\partial^* \Omega}\\
    =&\,D (Eu) \res{\Omega}+u\, \nu_{\Omega} \mathscr{H}^{n-1}\res{\partial^* \Omega}\\
    =&\, Du \res{\Omega}+u \,\nu_{\Omega} \mathscr{H}^{n-1}\res{\partial^* \Omega}.
\end{align*}
where we have used that $D(Eu) \ll\mathscr{H}^{n}$,  $\Omega$ is open,
and $Eu=u$ in $\Omega$.

Therefore, for any $\phi \in C_c^1(\mathbb{R}^n;\mathbb{R}^n)$,
\begin{align*}
   \int_{\Omega} u\, \div\,\phi\, \dr y
   =\int_{\mathbb{R}^n} \tilde{u}\, \div\,\phi\, \dr y
   =- \int_{\mathbb{R}^n} \phi \cdot \dr D\tilde{u}
   =-\int_{\Omega} \phi \cdot \dr Du- \int_{\partial^* \Omega} u\, \phi \cdot \nu_{\Omega} \, \dr\mathscr{H}^{n-1}.
\end{align*}
\end{proof}

\begin{remark}
It would be interesting to study further the relations between extension domains for bounded $BV$ functions,
bounded sets of finite perimeter, and bounded divergence-measure fields.
In particular, these include the questions
whether the following statements hold{\rm :}

\begin{enumerate}
\item[\rm (i)] A Sobolev extension domain of finite perimeter is still an extension domain for bounded divergence-measure fields{\rm .}
\item[\rm (ii)] Condition \eqref{neat-2} is also a necessary condition for an open set $\Omega$ to be an extension domain for bounded divergence-measure fields{\rm .}
\item[\rm (iii)] Condition \eqref{criteria} is sufficient for an open set $\Omega$ to be an extension domain for bounded BV functions{\rm .}
\item[\rm (iv)] Any extension domain for bounded BV functions is still an extension domain for bounded divergence-measure fields{\rm .}
\item[\rm (v)] The property, $g \in L^{\infty} \left( \partial  \Omega \setminus \Omega^0; \mathscr{H}^{n-1} \right)$, still holds, provided that
the open set $\Omega$ is required to be only an extension domain for bounded divergence-measure fields,
without assuming condition \eqref{neat-2}.
\end{enumerate}
\end{remark}

\begin{remark}
Let $\Omega$ be an open set satisfying \eqref{neat-2}.
We prescribe any $g \in L^{\infty}\left(\partial \Omega \setminus \Omega^0; \mathscr{H}^{n-1}\right)$.
An interesting question is whether there exists a vector field $\FF \in \mathcal{DM}^{\infty}(\Omega)$ such that
the normal trace of $\FF$
on $\partial \Omega$ corresponding to $g$.  Furthermore, for such $g$,  it is important to know whether problem \eqref{fanw}
without the upper $(n-1)$--Ahlfors condition can be solved.
Note that, with the upper $(n-1)$--Ahlfors regular condition imposed,
we have proved in Theorem {\rm 6.2} that the solvability of the divergence equation with prescribed $L^{\infty}$ boundary data
on $\partial \Omega \setminus \Omega^0$ is equivalent to the solvability of problem {\rm (6.4)} that
could be potentially easier to be solved owing to the nice structure of the reduced boundaries,
as indicated in Remark {\rm 6.3}.
On the other hand, the solvability question for problem {\rm (6.4)} in general domains is still open.
\end{remark}

\bigskip
\textbf{Acknowledgements}. $\,$
The research of
Gui-Qiang G. Chen was supported in part by
the UK
Engineering and Physical Sciences Research Council Award
EP/E035027/1 and
EP/L015811/1, and the Royal Society--Wolfson Research Merit Award (UK).
The research of Qinfeng Li was supported in part by the Purdue PRF Grant.
The research of Monica Torres was supported in part by the Simons Foundation Award No. 524190
and by the National Science Foundation Award \# 1813695.
The authors would like to thank Professor Changyou Wang for useful discussions on extension domains
for Sobolev spaces.

\bigskip

\end{document}